\theoremstyle{plain}
\newtheorem{thm}{Theorem}[section]
\newtheorem{lem}[thm]{Lemma}
\newtheorem{prop}[thm]{Proposition}
\numberwithin{equation}{section}
\renewcommand{\theequation}{\thesection.\arabic{equation}}
\begin{document}

\title[The diffusion approximation model in
radiation hydrodynamics]
{Global existence and decay rates of strong solutions
to the diffusion approximation model in radiation hydrodynamics}

\author[P. Jiang]{Peng Jiang}
\address{School of Mathematics, Hohai University, Nanjing
 210098, P. R. China}
\email{syepmathjp@163.com}

\author[f.-C. Li]{fucai Li}
\address{Department of Mathematics, Nanjing University, Nanjing
 210093, P. R. China}
\email{fli@nju.edu.cn}

\author[J.-K. Ni]{Jinkai Ni$^*$} \thanks{$^*$\! Corresponding author}
\address{Department of Mathematics, Nanjing University, Nanjing
 210093, P. R. China}
\email{602023210006@smail.nju.edu.cn}

\begin{abstract}
In this paper, we study the global well-posedness and optimal time decay
rates of strong solutions to the diffusion approximation model
in radiation hydrodynamics in $\mathbb{R}^3$. This model consists of the full compressible 
Navier-Stokes equations and the radiative diffusion equation which describes
the influence and interaction between thermal radiation and fluid motion. 
Supposing that the initial perturbation around the equilibrium is sufficiently small in $H^2$-norm,
we  obtain the global strong solutions by utilizing method of the frequency decomposition. Moreover, by performing 
Fourier analysis techniques and using the delicate energy method, we consequently
derive the optimal decay rates (including highest-order derivatives)
of solutions for this model. 
\end{abstract}

\keywords{global well-posedness, time decay rates, radiation hydrodynamics, diffusion approximation model}

\subjclass[2010]{76N15, 76N10, 35B40}

\maketitle

\setcounter{equation}{0}
 \indent \allowdisplaybreaks

\section{Introduction}\label{Sec:intro-resul}
\subsection{Previous literature on our model}
In the study of radiation hydrodynamics, the propagation of thermal radiation and its effect on fluid motion have always been taken into account \cite{MM-1984-book,Pgc-2005,Cs-1960}.
As the temperature increases, the effect of thermal radiation becomes more pronounced. More precisely, physical research showed that the radiation intensity varies as the fourth power of the temperature.
Radiation hydrodynamics has a broad range of applications, such as supernova explosions, astrophysics, stellar evolution, and laser fusion (cf.\cite{KW-book-1994,PO-book-1968,BD-JMP-2006}). 

In this paper, we investigate the diffusion approximation (also called the Eddington approximation) model in radiation hydrodynamics, which reveals
energy flow arising from the radiative process in a semi-quantitative sense.
More precisely, when  the specific intensity of radiation
is almost isotropic \cite{Pgc-1973}, the governing equations of the diﬀusion approximation model for 3-D flow in Euler coordinates can be written   as follows (cf.\cite{Pgc-1973,BWWS-PT-1966}):
\begin{equation}\label{I-1}  
\left\{
\begin{aligned}
& \partial_t \varrho+{\rm div}(\varrho u)=0, \\
& \varrho (\partial_t u+ u\cdot\nabla u)+\nabla P=\mu \Delta u+(\lambda+\mu)\nabla {\rm div}u-\frac{1}{\mathfrak{C}}\nabla n, \\
& c_\nu\varrho (\partial_t \Theta+u\cdot \nabla\theta)=\kappa \Delta \Theta-P{\rm div}u+\lambda({\rm div}u)^2+2\mu D\cdot D-\Theta^4+n,\\
&\frac{1}{\mathfrak{C}}\partial_t n-\Delta n=\Theta^4-n, 
\end{aligned}\right.
\end{equation}
with the initial data
\begin{equation}\label{I--1}
(\varrho,u,\Theta,n)|_{t=0}=(\varrho_0(x),u_0(x),\Theta_0(x),n_0(x)), \quad x\in \mathbb{R}^{3}. 
\end{equation}
Here the unknown functions $\varrho=\varrho(x,t)>0$,\ $\Theta=\Theta(x,t)>0$,  $u=(u_1, u_2,u_3)=(u_1(x,t), \linebreak  u_2(x,t),u_3(x,t))\in\mathbb{R}^{3}$ and $n=n(t,x)\geq 0$
denote the density, temperature, velocity field of the fluid and radiation field respectively, with 
$x\in \mathbb{R}^{3}$ and $  t\geq 0$. Besides,
$P= R\varrho \Theta$ denotes the pressure. 
The parameters   $R$, $c_\nu$, and $\kappa$ are the physical constants, $\mathfrak{C}>0$ is light speed,
and $\lambda$ and $\mu>0$ are the viscosity coefficients 
 satisfying $3\lambda+2\mu>0$. And $D=D(u)$ is the deformation tensor
defined as
\begin{align*}
D_{ij}:=\frac{1}{2}\Big(\frac{\partial u_i }{\partial u_j} +\frac{\partial u_j }{\partial u_i}       \Big),      \quad   D\cdot D:=\sum_{i,j=3}^3 D_{ij}^2.
\end{align*}

Hereafter, we will recall some research results related to this model in the radiation hydrodynamics. 
Firstly, let's introduce the simplified model related to \eqref{I-1}. If relativistic effects are taken into account, the reciprocal of the light speed $\frac{1}{\mathfrak{C}}$ in $\eqref{I-1}_{4}$ can be taken as zero. Meanwhile, we assume that the eﬀect of radiation on the momentum could
be neglected. Then applying gradient to both sides of $\eqref{I-1}_{4}$ and denoting $q=-\nabla n$, we  deduce the following non-equilibrium or so-called radiating gas model (\!\cite{KNN,WX-2011-MMAS}):
\begin{equation}\label{NJKI-1}  
\left\{
\begin{aligned}
& \partial_t \varrho+{\rm div}(\varrho u)=0, \\
& \varrho (\partial_t u+ u\cdot\nabla u)+\nabla P=\mu \Delta u+(\lambda+\mu)\nabla {\rm div}u, \\
& C_\textsl{v}\varrho (\partial_t \Theta+u\cdot \nabla\theta)=\kappa \Delta \Theta-P{\rm div}u+\lambda({\rm div}u)^2+2\mu D\cdot D-{\rm div}q,\\
&-\nabla{\rm div}q+q+\nabla(\Theta^4)=0.
\end{aligned}\right.
\end{equation}
For 1-D case of the model \eqref{NJKI-1}, Kawashima-Nikkuni-Nishibata
\cite{K-N-N} firstly proved the global existence and the large time behavior of smooth solution near equilibrium state. In addition, there are some results for the stability of basic
waves. Coulombel et al. \cite{cgll, lcj} gave the existence and the stability of shock profile. Fan-Ruan-Xiang \cite{F-R-X1} investigated the stability of composite shock wave. The stability of a rarefaction wave was considered in \cite{Lcj-2011-cms, F-R-X2}. Furthermore, Wang-Xie \cite{WX-2011-nonlinear A}
proved the stability of viscous contact wave by elementary energy methods. 
Later, Hong \cite{Hh-arwa-2017} showed the stability of composite wave  under partially large initial perturbations. 
For multi-D case, Wang-Xie \cite{WX-2011-MMAS} studied the global existence
and large time behavior of smooth solutions for Cauchy problem when initial data is sufficient small.
For more results on \eqref{NJKI-1},  interested reader can
refer to \cite{WX-2011-JDE,Xf-DCDS-2012,RX-2013-MMAS} as well as the references therein.

Unfortunately, as it is clear 
from $\eqref{NJKI-1}_{2}$, the eﬀect of radiative pressure is excluded from the momentum equation,
which seems not very physically valid. Compared to the model \eqref{NJKI-1}, the diffusion approximation
model has an linear radiation effect term $\frac{1}{\mathfrak{C}}\nabla n$ in the momentum equation $\eqref{I-1}_{2}$.
However, since $\frac{1}{\mathfrak{C}}\nabla n$ lacks dissipative properties, it brings great difficulties in the study of global well-posedness. In fact, the  
classical energy method developed in \cite{MN-jmku-1980, K} doesn't work here. 
For 1-D case of model \eqref{I-1}, by ignoring the radiation effect term $n_{x}$ in momentum equation, Jiang-Xie-Zhang \cite{JXZ-2009} firstly proved the global existence of classical solution to the initial-boundary value problem with large initial data. Later, Jiang \cite{Jp-2015-DCDS} overcame the difficulties caused by the lower regularity of $\frac{1}{\mathfrak{C}}n_{x}$ and used refined $L^{p}$ $(p<2)$ estimates to obtain the global existence of classical solution for the full diﬀusion approximation model.
For 3-D case, also ignoring $\frac{1}{\mathfrak{C}}\nabla n$ in $\eqref{I-1}_{2}$, Jiang \cite{Jp-DCDS-2017} and Kim et al.
\cite{KHK-AMS-2023} studied the global existence and large time behavior of strong solutions for Cauchy problem 
around a constant state. Recently, Wang-Xie-Yang \cite{WXY-JDE-2023} used the Littlewood-Paley decomposition technique and frequency decomposition method combining with delicate energy estimates to obtain the global existence and time decay rates of classical solution (in $H^{4}(\mathbb{R}^3)$ space) of \eqref{I-1} in $\mathbb{R}^{3}$ with small initial data.
More research results on this model can be found in \cite{JZ-2018-JMAA, Jp-2018-ZAMP}.

\subsection{Our results}

In this paper, based on the results in \cite{WXY-JDE-2023}, we shall further study the global well-posedness and optimal time decay rates of strong solutions for the Cauchy problem \eqref{I-1}--\eqref{I--1} in the Sobolev space $H^{2}(\mathbb{R}^3)$.
Under the assumption that the initial perturbation around the equilibrium is
sufficiently small, we get the global existence of the strong solution.
Moreover, assuming that the initial data is bounded in $L^1$-norm, we obtain the
optimal time decay rates of all order derivatives (including highest-order derivatives) of this strong solution. Besides, by using
Sobolev’s interpolation inequality, we consequently derive the optimal time decay
rates in $L^{p}$-norm ($p\geq 2$).

Notice that $(\varrho, u, \Theta, n)\equiv(1,0,1,1)$ is an equilibrium
state of the system \eqref{I-1}, we shall set the standard perturbation $\varrho=1+\rho,\ \Theta=1+\theta,\  n=\eta+1$. Without loss of generality, we  take $\mu=\lambda=\kappa=R=C_\textsl{v}= \mathfrak{C} =1$. Then the Cauchy problem \eqref{I-1}-\eqref{I--1}
can be written as follows:
\begin{equation}\label{I-2}
\left\{
\begin{aligned}
& \partial_t \rho+(1+\rho){\rm div}u+\nabla \rho\cdot u=0, \\
& \partial_t u+u\cdot\nabla u+\frac{1+\theta}{1+\rho}\nabla\rho+\nabla\theta=\frac{\Delta u}{1+ \rho}+\frac{2\nabla {\rm div}u}{1+\rho}-\frac{\nabla \eta}{1+\rho}, \\
& \partial_t \theta+u\cdot\nabla\theta+(1+\theta){\rm div}u=\frac{\Delta \theta}{1+ \rho}+\frac{({\rm div}u)^2}{1+\rho}+\frac{2D\cdot D}{1+\rho}-\frac{(1+\theta)^4}{1+\rho}+\frac{1+\eta}{1+\rho},\\
& \partial_t \eta-\Delta\eta=(1+\theta)^4-(1+\eta),
\end{aligned}\right.
\end{equation}
with the initial data
\begin{align}\label{I--2}
(\rho,u,\theta,\eta)(x,t)|_{t=0}=\,&( \rho_{0}(x),u_{0}(x),\theta_{0}(x),\eta_0(x))\nonumber\\
:=\,&(\varrho_{0}(x)-1,u_{0}(x),\Theta_{0}(x)-1, n_0(x)-1 ), \quad x\in \mathbb{R}^{3}. .
\end{align}


The main results of this paper can be stated as follows.

\begin{thm}\label{T1.1}
Assume that $\|(\rho_0,u_0,\theta_0,\eta_0)\|_{H^2}$ is sufficiently small.
Then,
the Cauchy problem \eqref{I-2}--\eqref{I--2} admits a unique global 
strong solution $(\rho,u,\theta,\eta)$ satisfying  
\begin{gather*}
\rho,u,\theta,\eta \in C([0,\infty);H^{2}),
\end{gather*}
and
\begin{align*}
&\|(\rho,u,\theta,\eta)(\tau)\|_{H^{2}}^{2}+\int_{0}^{t}\Big(\|\nabla \rho(\tau)\|_{H^{1}}^{2}+\|\nabla(u,\theta,\eta)(\tau)\|_{H^{2}}^{2}+\|(4\theta-\eta)(\tau)\|_{H^2}^2\Big)\mathrm{d}\tau\nonumber\\
&\,\qquad \leq C\|(\rho,u,\theta,\eta)(0)\|_{H^{2}}^{2}.
\end{align*}
\end{thm}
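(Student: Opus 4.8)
The plan is to prove Theorem~\ref{T1.1} by the standard combination of local existence and a priori estimates, using the \emph{frequency decomposition} announced in the introduction to cope with the non-dissipative radiative pressure term $\nabla\eta/(1+\rho)$ in $\eqref{I-2}_2$. First I would establish local-in-time existence and uniqueness of a strong solution in $C([0,T];H^2)$ by a routine linearization/fixed-point argument (or by quoting the analogous construction in \cite{WXY-JDE-2023} adapted to $H^2$); the real content is the global a priori bound
\begin{align*}
\mathcal{E}(t):=\|(\rho,u,\theta,\eta)(t)\|_{H^2}^2+\int_0^t\Big(\|\nabla\rho(\tau)\|_{H^1}^2+\|\nabla(u,\theta,\eta)(\tau)\|_{H^2}^2+\|(4\theta-\eta)(\tau)\|_{H^2}^2\Big)\,\mathrm{d}\tau\le C\,\mathcal{E}(0),
\end{align*}
from which the continuation argument closes globally once $\mathcal{E}(0)$ is small.

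The core of the a priori estimate proceeds in several layers. \textbf{Step 1 (basic energy estimate).} Multiply $\eqref{I-2}_1$ by $\rho$ (with the right weight coming from the linearized pressure), $\eqref{I-2}_2$ by $(1+\rho)u$, $\eqref{I-2}_3$ by $\theta$, and test $\eqref{I-2}_4$ suitably; on summing, the awkward couplings $\nabla\eta\cdot u$ and the radiation source in $\eqref{I-2}_3$, $\eqref{I-2}_4$ should combine. Linearizing $(1+\theta)^4-(1+\eta)\approx 4\theta-\eta$, the two scalar equations for $\theta$ and $\eta$ produce a damping term proportional to $\|4\theta-\eta\|_{L^2}^2$ — this is exactly the dissipation appearing in the statement, and it is the mechanism that tames the radiation field even though $\eta$ itself has no good $L^2$-in-time estimate beyond its own parabolic smoothing. \textbf{Step 2 (higher-order estimates).} Apply $\nabla^k$ for $k=1,2$ to all four equations, multiply by the corresponding $\nabla^k$ of the unknowns (with the density-weighted inner products to absorb the variable coefficients $1/(1+\rho)$), and control all nonlinear commutators by Moser-type and Gagliardo–Nirenberg inequalities, paying by $\sqrt{\mathcal E}$ times the dissipation; the parabolic terms give $\|\nabla(u,\theta,\eta)\|_{H^2}^2$. \textbf{Step 3 (dissipation for $\nabla\rho$).} Since the density satisfies only a transport equation, $\|\nabla\rho\|_{H^1}^2$ is recovered by the usual trick: use the momentum equation $\eqref{I-2}_2$ to write $\nabla\rho$ in terms of $\partial_t u$, $\Delta u$, $\nabla\theta$, $\nabla\eta$ and nonlinear terms, then test against $\nabla u$ (and $\nabla^2$ of it) to produce a term $c\|\nabla\rho\|_{H^1}^2$ modulo $\varepsilon\|\nabla u\|_{H^2}^2+\frac{d}{dt}(\text{interaction functional})+\text{l.o.t.}$ Here the term $-\nabla\eta/(1+\rho)$ gets moved to the dissipative side because $\|\nabla\eta\|_{H^1}^2$ is already available from Step 2.

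\textbf{Step 4 (closing the estimate).} Form a Lyapunov functional $\mathcal{L}(t)=\sum_{k=0}^2\|\nabla^k(\rho,u,\theta,\eta)\|_{L^2}^2 + \delta\sum(\text{interaction terms from Step 3})$ with $\delta>0$ small so that $\mathcal{L}\simeq\|(\rho,u,\theta,\eta)\|_{H^2}^2$, and show $\frac{d}{dt}\mathcal{L}+c\mathcal{D}\le C\sqrt{\mathcal{L}}\,\mathcal{D}$ where $\mathcal{D}$ is the full dissipation rate; under the smallness hypothesis the right side is absorbed, and integrating in time gives the theorem. I expect the main obstacle to be precisely the bookkeeping around the radiative pressure $\nabla\eta/(1+\rho)$: one must verify that the only dissipation for $\eta$ one can extract — the diffusion $\|\nabla\eta\|_{H^2}^2$ from $\eqref{I-2}_4$ together with the relaxation $\|4\theta-\eta\|_{H^2}^2$ — is genuinely enough to control $\nabla\eta$ appearing in the momentum equation at the top ($k=2$) level, without losing a derivative. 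This is where the frequency-decomposition idea enters: splitting into low and high frequencies, on low frequencies the coupling is controlled by the algebraic structure of the linearized system, while on high frequencies the parabolic smoothing of $u,\theta,\eta$ dominates and $\nabla\rho$-dissipation (Step 3) closes the loop. Keeping track of the constants so that the smallness of $\mathcal{E}(0)$ (rather than smallness of the time-dependent norm, which would be circular) suffices is the delicate point.
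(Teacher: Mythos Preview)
Your high-level architecture (local existence + a priori bound closed under smallness) matches the paper, and you correctly identify the dissipation $\|4\theta-\eta\|^2$ coming from the radiative relaxation and the need for a cross-term to recover $\|\nabla\rho\|_{H^1}^2$. However, there is a genuine gap in Step~1, and the frequency mechanism you sketch in Step~4 is not the one that actually closes the estimate.

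At the zero-order level the coupling $\int u\cdot\nabla\eta$ does \emph{not} ``combine'' with anything: the $\eta$-equation carries no ${\rm div}\,u$ term, so after testing $\eqref{I-2}_1$--$\eqref{I-2}_4$ against $(\rho,u,\theta,\eta)$ (with any reasonable weights) you are left with $\big|\int u\cdot\nabla\eta\big|\le \tfrac12\|u\|_{L^2}^2+\tfrac12\|\nabla\eta\|_{L^2}^2$, and there is no $\|u\|_{L^2}^2$ damping to absorb the first piece (this is exactly \eqref{G3.51} in the paper). The paper's remedy is a linear change of variables $G:=4\theta-\eta$, $F:=\theta+\eta$: in the new system the momentum equation contains $\nabla F$ rather than $\nabla\theta+\nabla\eta$, and the $F$-equation $\partial_tF-\Delta F+{\rm div}\,u=\cdots$ acquires precisely the ${\rm div}\,u$ needed to cancel $\langle u,\nabla F\rangle$. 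Without this (or an equivalent trick) your Step~1 cannot close.

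Second, the frequency decomposition the paper uses is three-band (low/medium/high), not two. Energy methods with the cross-terms $\langle u,\nabla\rho^l\rangle$ and $\langle\nabla{\rm div}\,u,\nabla\rho^h\rangle$ (note the frequency localization in the interaction functionals, which you do not have) yield an inequality that still contains $\int_0^t\|(\rho^m,u^m,\theta^m,\eta^m)\|_{L^2}^2$ on the right; this medium-frequency remainder is then controlled separately by spectral analysis of the linearized semigroup plus Duhamel (Lemmas~\ref{L3.9}--\ref{L3.10}), giving a bound of the form $C\|\mathbb U(0)\|_{L^2}^2+C\delta\int_0^t(\text{dissipation})$. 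Your Step~4 appeals to ``the algebraic structure of the linearized system'' for low frequencies, which is the right instinct, but the actual implementation requires this extra Duhamel argument rather than a single Lyapunov inequality of the form $\tfrac{d}{dt}\mathcal L+c\mathcal D\le C\sqrt{\mathcal L}\,\mathcal D$.
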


\begin{thm}\label{T1.2}
Under the assumption of Theorem \ref{T1.1},  if we further assume that $\|(\rho_0,u_0,\theta_0,\linebreak \eta_0)\|_{L^1}$ is bounded. Then, it holds that  
\begin{align}
\|\nabla^m (\rho,u,\theta,\eta)(t)\|_{L^2}\leq&\, C(1+t)^{-\frac{3}{4}-\frac{m}{2}},
\end{align}  
for all $t\geq 0$ and $m=0,1,2$; and
\begin{align}\label{G1.9}
\|(\rho,u,\theta,\eta)(t)\|_{L^p}&\leq C(1+t)^{-\frac{3}{2}(1-\frac{1}{p})},
\end{align} 
for all $t\geq 0$ and $2\leq p\leq{\infty}$;
\begin{align}\label{G1.10}
\|\nabla(\rho,u,\theta,\eta)(t)\|_{L^p}&\leq C(1+t)^{-\frac{3}{2}(\frac{4}{3}-\frac{1}{p})},
\end{align} 
for all $t\geq 0$ and $2\leq p\leq 6$.
Moreover, 
\begin{align}
\|\partial_t (\rho,u)(t)\|_{L^2}\leq C(1+t)^{-\frac{5}{4}}, \\
\|\partial_t (\theta,\eta)(t)\|_{L^2}\leq C(1+t)^{-\frac{3}{4}}.
\end{align}
\end{thm}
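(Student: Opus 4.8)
The plan is to combine a spectral (Fourier) analysis of the linearized system with the nonlinear energy estimates of Theorem~\ref{T1.1}, exploiting the dissipative structure already visible there: $u,\theta,\eta$ carry full parabolic dissipation, $\rho$ is damped only through its coupling with $u$ (so its dissipation sits at the level of $\nabla\rho$), and the combination $4\theta-\eta$ is additionally relaxed. Write \eqref{I-2} as $\partial_t U=\mathcal{A}U+\mathcal{F}(U)$, where $U=(\rho,u,\theta,\eta)^{\top}$, $\mathcal{A}$ is the constant-coefficient operator obtained by linearizing at $0$, and $\mathcal{F}(U)$ is the (at least quadratic) nonlinear remainder; set $\delta_0:=\|U_0\|_{L^1\cap H^2}$. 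First I would establish $L^2$ decay of the linear semigroup by analyzing the symbol $\widehat{\mathcal{A}}(\xi)$ through a low/intermediate/high frequency splitting: for $|\xi|$ small the spectrum consists of acoustic-thermal branches with real part $\sim -c|\xi|^2$ (with the usual $O(|\xi|)$ oscillation) together with a branch whose real part stays below a negative constant, produced by the $4\theta-\eta$ relaxation, while for $|\xi|$ bounded away from $0$ every eigenvalue has real part $\le-c_0<0$ --- no regularity loss occurs, since, unlike the radiating-gas model \eqref{NJKI-1}, $\eta$ keeps its own parabolic equation. This yields, for $m\ge0$,
\[
\|\nabla^m e^{t\mathcal{A}}f\|_{L^2}\le C(1+t)^{-\frac34-\frac m2}\|f\|_{L^1}+Ce^{-c_0 t}\|\nabla^m f\|_{L^2},
\]
together with the parabolic smoothing gain replacing $e^{-c_0 t}\|\nabla^m f\|_{L^2}$ by $t^{-1/2}e^{-c_0 t}\|\nabla^{m-1}f\|_{L^2}$ on the $(u,\theta,\eta)$-block (and a similar coupling gain on the $\rho$-block).

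Next, set $\mathcal{M}(t):=\sup_{0\le s\le t}\sum_{m=0}^{2}(1+s)^{\frac34+\frac m2}\|\nabla^m U(s)\|_{L^2}$ and run a continuity argument on $\mathcal{M}$. Using Duhamel's formula $U(t)=e^{t\mathcal{A}}U_0+\int_0^t e^{(t-s)\mathcal{A}}\mathcal{F}(U(s))\,\dd s$, the Leibniz rule, and Gagliardo--Nirenberg/Sobolev inequalities, one checks that $\|\mathcal{F}(U(s))\|_{L^1}\lesssim(1+s)^{-3/2}\mathcal{M}(t)^2$ and that $\|\mathcal{F}(U(s))\|_{L^2}$, $\|\nabla\mathcal{F}(U(s))\|_{L^2}$ decay faster; the key observation is that the nonlinearity of the continuity equation carries at most one derivative, so that at orders $m=0,1$ no $\nabla^3 U$ ever appears (such a term is only controlled in $L^2_t$, not pointwise in $t$). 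Splitting the Duhamel integral at $s=t/2$ and inserting the linear bounds gives
\[
\|\nabla^m U(t)\|_{L^2}\le C(1+t)^{-\frac34-\frac m2}\big(\delta_0+\mathcal{M}(t)^2\big),\qquad m=0,1.
\]

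The decay of $\nabla^2 U$ --- the genuinely hard part --- cannot be obtained this way, precisely because $\|\nabla^2\mathcal{F}(U)\|_{L^2}$ requires control of $\|\nabla^3 U\|_{L^2}$. Instead I would use the top-order energy estimate behind Theorem~\ref{T1.1}: there is a functional $\mathcal{E}_2(t)\sim\|\nabla^2 U(t)\|_{L^2}^2$ with
\[
\frac{\dd}{\dd t}\mathcal{E}_2(t)+c\Big(\|\nabla^2\rho\|_{L^2}^2+\|\nabla^3(u,\theta,\eta)\|_{L^2}^2+\|\nabla^2(4\theta-\eta)\|_{L^2}^2\Big)\le C\,\mathcal{R}_2(t),
\]
where the remainder $\mathcal{R}_2$ is, by $\|U\|_{H^2}\le C\delta_0$ and the bounds just proved, either absorbed into the dissipation or dominated by a fast-decaying multiple of $\mathcal{E}_2$. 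The density contribution $\|\nabla^2\rho\|_{L^2}^2$ already sits in the dissipation; for $u,\theta,\eta$ I would use $\|\nabla^2 v\|_{L^2}^2\le C\|\nabla v\|_{L^2}\|\nabla^3 v\|_{L^2}$ together with the rate $\|\nabla(u,\theta,\eta)(t)\|_{L^2}\le C(1+t)^{-5/4}$ from the previous step to convert the dissipation into the super-linear lower bound $c(1+t)^{5/2}\|\nabla^2(u,\theta,\eta)(t)\|_{L^2}^4$. The resulting Riccati-type inequality forces $\mathcal{E}_2(t)\le C(\delta_0^2+\mathcal{M}(t)^4)(1+t)^{-7/2}$, so the continuity argument closes, $\mathcal{M}(t)\le C(\delta_0+\mathcal{M}(t)^2)$, hence $\mathcal{M}(t)\le C\delta_0$ for $\delta_0$ small, which is exactly $\|\nabla^m(\rho,u,\theta,\eta)(t)\|_{L^2}\le C(1+t)^{-3/4-m/2}$ for $m=0,1,2$. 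The interlacing here --- feeding the lower-order decay into the top-order dissipation via interpolation to manufacture a super-linear dissipation rate --- is the main obstacle I expect.

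Finally, the $L^p$ bounds \eqref{G1.9}--\eqref{G1.10} follow from the $L^2$ rates above by the Gagliardo--Nirenberg--Sobolev inequalities: for $2\le p\le6$, $\|f\|_{L^p}\lesssim\|f\|_{L^2}^{1-\sigma}\|\nabla f\|_{L^2}^{\sigma}$ and $\|\nabla f\|_{L^p}\lesssim\|\nabla f\|_{L^2}^{1-\sigma}\|\nabla^2 f\|_{L^2}^{\sigma}$ with $\sigma=3(\tfrac12-\tfrac1p)$, while for $6\le p\le\infty$ one interpolates down to $\|f\|_{L^\infty}\lesssim\|\nabla f\|_{L^2}^{1/2}\|\nabla^2 f\|_{L^2}^{1/2}$; substituting the exponents gives exactly the stated rates. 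The time-derivative estimates come straight from \eqref{I-2}: $\partial_t\rho=-(1+\rho)\dv u-u\cdot\nabla\rho$ yields $\|\partial_t\rho\|_{L^2}\lesssim\|\nabla u\|_{L^2}+\|U\|_{L^\infty}\|\nabla U\|_{L^2}\lesssim(1+t)^{-5/4}$; solving $\eqref{I-2}_2$ for $\partial_t u$ and using that $\nabla\rho,\nabla\theta,\nabla\eta$ decay like $(1+t)^{-5/4}$ while $\Delta u$ decays like $(1+t)^{-7/4}$ gives $\|\partial_t u\|_{L^2}\lesssim(1+t)^{-5/4}$; and in $\eqref{I-2}_{3,4}$ the slowest contribution is the relaxation term $4\theta-\eta$ (with $\dv u$, $\Delta\theta$, $\Delta\eta$ decaying faster), whence $\|\partial_t(\theta,\eta)\|_{L^2}\lesssim\|(4\theta-\eta)(t)\|_{L^2}+\cdots\lesssim(1+t)^{-3/4}$.
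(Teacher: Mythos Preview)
Your overall architecture---linear spectral estimates, Duhamel for the low orders, an energy argument at the top order, then interpolation for $L^p$ and direct substitution for $\partial_t$---matches the paper's, and the parts for $m=0,1$, for \eqref{G1.9}--\eqref{G1.10}, and for $\partial_t(\rho,u,\theta,\eta)$ are fine. The gap is in the top-order step.

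You assert a functional $\mathcal{E}_2\sim\|\nabla^2 U\|_{L^2}^2$ satisfying
\[
\frac{\dd}{\dd t}\mathcal{E}_2+c\Big(\|\nabla^2\rho\|_{L^2}^2+\|\nabla^3(u,\theta,\eta)\|_{L^2}^2+\|\nabla^2(4\theta-\eta)\|_{L^2}^2\Big)\le C\,\mathcal{R}_2
\]
with $\mathcal{R}_2$ ``absorbed or a fast-decaying multiple of $\mathcal{E}_2$''. This is where the argument breaks. The $\|\nabla^2\rho\|_{L^2}^2$ dissipation can only be produced by the Matsumura--Nishida cross term $\epsilon\langle\nabla u,\nabla^2\rho\rangle$; differentiating it in time and using $\partial_t\rho=-\dv u+\mathcal{N}_1$ unavoidably returns $\epsilon\|\nabla\dv u\|_{L^2}^2\le\epsilon\|\nabla^2 u\|_{L^2}^2$ on the right (and the $\nabla F$ coupling likewise returns $\epsilon\|\nabla^2 F\|_{L^2}^2$). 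These are \emph{not} small multiples of $\mathcal{E}_2$ that decay; they are $O(\epsilon)$ pieces of $\mathcal{E}_2$ itself. If you interpolate them as you propose, $\|\nabla^2 u\|_{L^2}^2\le\|\nabla u\|_{L^2}\|\nabla^3 u\|_{L^2}$, half absorbs but the residual source is $C\|\nabla u\|_{L^2}^2\lesssim(1+t)^{-5/2}\mathcal{M}^2$. Your Riccati lower bound $\|\nabla^3 v\|_{L^2}^2\gtrsim(1+t)^{5/2}\mathcal{M}^{-2}\|\nabla^2 v\|_{L^2}^4$ then has equilibrium $\|\nabla^2 v\|_{L^2}^2\sim\mathcal{M}^2(1+t)^{-5/2}$, i.e.\ $\|\nabla^2 v\|_{L^2}\sim(1+t)^{-5/4}$, which is one order short of the $(1+t)^{-7/4}$ needed to close $\mathcal{M}$.

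The paper avoids this by frequency splitting rather than interpolation. It works only with $\mathcal{H}(t)\sim\|\nabla^2 U\|_{L^2}^2$ built with the cross term against $\nabla\rho^h$ (see \eqref{G3.46}), and uses $\|\nabla^3 v\|_{L^2}^2\ge R_0^2\|\nabla^2 v^h\|_{L^2}^2$ to absorb the \emph{high-frequency} part of the bad $\|\nabla^2(u,\theta,\eta)\|_{L^2}^2$ into the dissipation. What survives on the right is only $\|\nabla^2 U^L\|_{L^2}^2$, giving the \emph{linear} inequality $\frac{\dd}{\dd t}\mathcal{H}+C_9\mathcal{H}\le C\|\nabla^2 U^L\|_{L^2}^2$ (equation \eqref{G4.16}). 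The low--medium piece $\|\nabla^2 U^L\|_{L^2}$ is then estimated by Duhamel at the \emph{optimal} rate $(1+t)^{-7/4}$, so Gronwall delivers $\mathcal{H}(t)\le C(1+t)^{-7/2}$ and the bootstrap closes. In short: the paper trades your super-linear dissipation for a linear one whose source already decays at the target rate, and that trade is exactly what the frequency decomposition buys.
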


\subsection{Our strategies to the proofs of Theorems \ref{T1.1} and \ref{T1.2} }
The main difficulty in proving Therorem \ref{T1.1} comes from the linear 
term $\nabla\eta$ in \eqref{I-2}$_2$. From \eqref{G3.51}, we know that the classical energy method
doesn't work here. Fortunately, we overcome this difficulty through dividing strong solution $(\rho, u,\theta,\eta)$ into low, medium and high frequency parts. 
More specifically, to begin with, we obtain the estimates of the highest-order derivative of strong solutions by utilizing the classical energy method (see Proposition \ref{P3.4}). Secondly, motivated by \cite{Wwj-siam-2021}, we develop a delicate energy method by making a change of variable (see \eqref{G3.52}--\eqref{G3.53}),
which is a critical step to deal with the estimate of solutions on the
zero-order derivative (see Proposition \ref{P3.7}). 
Similar to the method used in \cite{Wwj-siam-2021},
we establish two useful temporal energy functionals $\mathcal{H}(t)$ (see \eqref{G3.46}) and
$\mathcal{L}(t)$ (see \eqref{G3.68}) to get the estimate of $\|(\rho,u,\theta,\eta)\|_{H^2}$ by
making use of the equivalence condition $\mathcal{H}(t)+\mathcal{L}(t)\backsim
\|(\rho,u,\theta,\eta)\|_{H^2}^2$. 
Then, by combining Proposition \ref{P3.4} and Proposition \ref{P3.7}, we completed these estimates with
Lemma \ref{L3.8}, which is the key to prove the global existence of strong solutions.
Finally, based on the spectral analysis on the linear system \eqref{A-1}--\eqref{A--1} and Duhamel's principle, we derive the $L^2$-norm estimate
of medium-frequency part (see Lemma \ref{L3.9}--\ref{L3.10}). Substituting Lemma
\ref{L3.10} into Lemma \ref{L3.8}, we consequently get a priori estimates
and then establish the desired global well-posedness of strong solutions.

In Theorem \ref{T1.2}, we will establish the optimal time decay rates of all
derivatives of strong solutions $(\rho,u,\theta,\eta)$ in $L^2$-norm.
In particular, we can even get the time decay rate of the highest-order derivative  of solutions when
the standard energy method fails.
Additionally, according to Sobolev's interpolation inequality, we further obtain the optimal time decay
rates of strong solutions in $L^p$-norm $(p\geq 2)$.
Below, we describe our strategies to the proof of this theorem.
Firstly, by utilizing the temporal energy functional defined in \eqref{G3.46},
which is equivalent to $\|\nabla^2(\rho,u,\theta,\eta)\|_{L^2}$ (see \eqref{G3.50}). 
We   arrive at the inequality \eqref{G4.16} through Plancherel theorem, which is the key step in the   proof Theorem \ref{T1.2}.
Next, based on the spectral analysis and the frequency decomposition,
we establish the estimates of strong solutions
in $L^2$-norm in Lemmas \ref{L4.1}--\ref{L4.2}. With the above estimates in hand, 
we eventually complete the proof of Theorem \ref{T1.2}.

The remainder of this paper is stated as follows. In Section 2, we provide some
preliminaries containing notations and a series of lemmas.
In Section 3, we utilize the frequency decomposition and delicate energy method 
to obtain a priori estimates of solutions and then
establish the global existence of strong solutions to Cauchy problem \eqref{I-2}--\eqref{I--2}.
In Section 4, we adapt low-medium frequency part in appendix to 
prove the optimal time-decay rates of solutions. Finally, we give
the estimates on the linearized system corresponding to \eqref{I-2}--\eqref{I--2} in the Appendix.

\section{Preliminaries}

In this section, we shall introduce some important notations and some basic facts which will be used 
frequently. 

\subsection{Notations}
Throughout  this paper, $C>0$ denotes a generic constant. 
$A\sim B$ means that $\frac{A}{C}\leq B\leq CA$ for some positive constant $C$.
For all integer $m\geq 0$, $\nabla^m$ represents the m--th derivatives with
respect to the variable $x$.  We also denote $\partial_i=\partial_{x_i}$ for $i=1,2,3$, 
$\partial^{\alpha}_x=\partial^{\alpha_1}_{x_1}\partial^{\alpha_2}_{x_2}\partial^{\alpha_3}_{x_3}$ for some multi-indices $\alpha=(\alpha_1,\alpha_2,\alpha_3)$.
We  use $\langle\cdot, \cdot\rangle$ to represent the standard $L^2$ inner product in $\mathbb{R}^3$, i.e.
\begin{align*}
\langle f,g \rangle=\int_{\mathbb{R}^3} f(x)g(x) \mathrm{d}x,
\end{align*}
for any $f(x), g(x) \in L^2(\mathbb{R}^3)$.  $(f|g):=f\cdot g$ represents the dot product of $f$ 
with the complex conjugate of $g$.
If a function $h:\mathbb{R}^3\rightarrow \mathbb{R}$ is integrable, it has the following Fourier transform:
\begin{align*}
\mathcal{F}(h)(\xi)=\widehat{h}(\xi)=\int_{\mathbb{R}^{3}}e^{-ix\cdot\xi}h(x)dx,    
\end{align*}
where $i=\sqrt{-1}\in\mathbb{C}$ and $x\cdot\xi=\sum_{j=1}^{3}x_{j}\xi_{j}$ for 
any $\xi\in\mathbb{R}^{3}$.

The norm in $L^p$ 
space ($1\leq p \leq \infty$) is denoted by $\|\cdot\|_{L^p}$.
For  Sobolev space $H^k(\mathbb{R}^3)$ ($k\geq 0$), the corresponding norm is 
denoted by $\|\cdot\|_{H^k}$, i.e. 
\begin{align*}
\|h\|_{H^k}:=\sum_{|\alpha|\leq k}\|\partial^{\alpha} h\|_{L^2}.    
\end{align*}
 For brevity, we set $\|(a,b)\|_{X}:=\|a\|_{X}+\|b\|_{X}$ for Banach
space $X$, where $a=a(x)$ and $b=b(x)$ belong to $X$.

Next, we recall the frequency decomposition. Choose two smooth cut-off functions
$\phi_0(\xi)$ and $\phi_1(\xi)$ satisfying $0\leq \phi_0(\xi),\ \phi_1(\xi)\leq 1$
($\xi\in\mathbb{R}^3$), and
\begin{align*}
\phi_0(\xi)=\begin{cases}1, \quad|\xi|\leq \frac{r_0}{2},\\0, \quad|\xi|>{r_0},\end{cases}  \quad \phi_1(\xi)=\begin{cases}0, \quad|\xi|< \frac{R_0}{2},\\1, \quad|\xi|>{R_0}+1,\end{cases}
\end{align*}
where $r_0$ and  $R_0$ shall be determined later.
Then, we take $\phi_0(D_x)$ and $\phi_1(D_x)$ be the pseudo-differential
operators with $\phi_0(\xi)$ and $\phi_1(\xi)$ respectively.
For any $f(x)\in L^2(\mathbb{R}^3)$, we define
\begin{align}\label{G2.4}
f^l(x)=\phi_0(D_x)f(x),\quad f^m(x)=(1-\phi_0(D_x)-\phi_1(D_x))f(x), \quad
f^h(x)=\phi_1(D_x)f(x),
\end{align}
where $D_x:=\frac{1}{\sqrt{-1}}(\partial_{x_1},\partial_{x_2},\partial_{x_3})$.
It is obvious that 
\begin{align*}
f(x)=f^l(x)+f^m(x)+f^h(x).   
\end{align*}
We define $f^L(x):=f^l(x)+f^m(x)$ and $f^H(x):=f^m(x)+f^h(x)$.

Finally,  Let $\Lambda^s$ be the pseudo-differential operator 
defined as 
\begin{align*}
\Lambda^s g=\mathcal{F}^{-1}\big(|\xi|^s \mathcal{F}(g)\big),    
\end{align*}
for some $s\in \mathbb{R}$.

\subsection{Some useful lemmas}

With the above preparations in hand, we now state some useful facts which
will be used  frequently later.

\begin{lem}[see\cite{Jp-DCDS-2017,AF-Pa-2003}] \label{L2.1}
There exists a constant $C>0$ such that for any $f, g \in H^2\left(\mathbb{R}^3\right)$ and any multi-index $\alpha$ with $1 \leq|\alpha| \leq 2$,
\begin{align*}
\|f\|_{L^{\infty}\left(\mathbb{R}^3\right)} & \leq C\left\|\nabla f\right\|_{L^2\left(\mathbb{R}^3\right)}^{1 / 2}\left\|\nabla^2 f\right\|_{L^2\left(\mathbb{R}^3\right)}^{1 / 2}\leq C\| f\|_{H^2\left(\mathbb{R}^3\right)}, \\
\|f g\|_{H^1\left(\mathbb{R}^3\right)} & \leq C\|f\|_{H^1\left(\mathbb{R}^3\right)}\left\|\nabla g\right\|_{H^1\left(\mathbb{R}^3\right)}, \\
\left\|\partial^\alpha(f g)\right\|_{L^2\left(\mathbb{R}^3\right)} & \leq C\left\|\nabla f\right\|_{H^1\left(\mathbb{R}^3\right)}\left\|\nabla g\right\|_{H^1\left(\mathbb{R}^3\right)},\\
\|f \|_{L^r\left(\mathbb{R}^3\right)} & \leq C\|f\|_{H^1\left(\mathbb{R}^3\right)} \quad \text{for} \quad  2\leq r \leq 6. 
\end{align*}  
\end{lem}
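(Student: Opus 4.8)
The statement collects four standard inequalities on $\mathbb{R}^{3}$, and I would prove them in the order listed, since the two product estimates reduce to the first two inequalities via the Leibniz rule and H\"older's inequality.

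\emph{The $L^{\infty}$ interpolation inequality.} I would argue by Fourier analysis. Since $\|f\|_{L^{\infty}}\leq(2\pi)^{-3}\|\widehat{f}\|_{L^{1}}$, I would split the frequency integral at a radius $R>0$ to be chosen: on $\{|\xi|\leq R\}$, writing $|\widehat{f}(\xi)|=|\xi|^{-1}\cdot|\xi|\,|\widehat{f}(\xi)|$ and applying the Cauchy--Schwarz inequality with $\int_{|\xi|\leq R}|\xi|^{-2}\,\mathrm{d}\xi=CR$ together with Plancherel's theorem gives $\int_{|\xi|\leq R}|\widehat{f}|\leq CR^{1/2}\|\nabla f\|_{L^{2}}$; on $\{|\xi|>R\}$, writing $|\widehat{f}(\xi)|=|\xi|^{-2}\cdot|\xi|^{2}|\widehat{f}(\xi)|$ and using $\int_{|\xi|>R}|\xi|^{-4}\,\mathrm{d}\xi=CR^{-1}$ gives $\int_{|\xi|>R}|\widehat{f}|\leq CR^{-1/2}\|\nabla^{2}f\|_{L^{2}}$. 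Choosing $R=\|\nabla^{2}f\|_{L^{2}}/\|\nabla f\|_{L^{2}}$ to balance the two contributions yields $\|f\|_{L^{\infty}}\leq C\|\nabla f\|_{L^{2}}^{1/2}\|\nabla^{2}f\|_{L^{2}}^{1/2}$, and the bound by $\|f\|_{H^{2}}$ then follows from Young's inequality.

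\emph{The embedding $H^{1}\hookrightarrow L^{r}$ and the product estimates.} For $2\leq r\leq 6$ the case $r=2$ is trivial, the case $r=6$ is the Gagliardo--Nirenberg--Sobolev inequality $\|f\|_{L^{6}}\leq C\|\nabla f\|_{L^{2}}$ in dimension three, and for $2<r<6$ I would interpolate, $\|f\|_{L^{r}}\leq\|f\|_{L^{2}}^{1-\theta}\|f\|_{L^{6}}^{\theta}$ with $\tfrac{1}{r}=\tfrac{1-\theta}{2}+\tfrac{\theta}{6}$, so in every case the right-hand side is $\leq C\|f\|_{H^{1}}$. For $\|fg\|_{H^{1}}$ I would bound $\|fg\|_{L^{2}}$ and $\|(\nabla f)g\|_{L^{2}}$ by H\"older in $L^{2}\times L^{\infty}$, estimating the $L^{\infty}$ factor by the first inequality as $\|g\|_{L^{\infty}}\leq C\|\nabla g\|_{L^{2}}^{1/2}\|\nabla^{2}g\|_{L^{2}}^{1/2}\leq C\|\nabla g\|_{H^{1}}$, and bound $\|f\nabla g\|_{L^{2}}$ in $L^{6}\times L^{3}$ using the embedding on $f$ and on $\nabla g$; summing gives $\|fg\|_{H^{1}}\leq C\|f\|_{H^{1}}\|\nabla g\|_{H^{1}}$. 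For $\|\partial^{\alpha}(fg)\|_{L^{2}}$ with $1\leq|\alpha|\leq2$, I would expand by Leibniz, $\partial^{\alpha}(fg)=\sum_{\beta\leq\alpha}\binom{\alpha}{\beta}\partial^{\beta}f\,\partial^{\alpha-\beta}g$, handle the extreme terms $(\partial^{\alpha}f)g$ and $f(\partial^{\alpha}g)$ in $L^{2}\times L^{\infty}$ (again using the first inequality for the $L^{\infty}$ factor), and handle the mixed term $(\partial_{i}f)(\partial_{j}g)$ appearing when $|\alpha|=2$ in $L^{4}\times L^{4}$ via the embedding with $r=4$; in each term every factor lands in a norm dominated by $\|\nabla f\|_{H^{1}}$ or $\|\nabla g\|_{H^{1}}$, which is what is claimed.

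There is no serious obstacle here; the only point requiring attention is the bookkeeping in the product estimates, namely arranging the Leibniz expansion so that no factor ever carries more than two derivatives and at most one factor is placed in $L^{\infty}$, so that the $L^{\infty}$ interpolation inequality and the Sobolev embedding apply with exactly the norms occurring on the right-hand sides.
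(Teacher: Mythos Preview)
Your proof is correct and entirely standard. The paper itself does not supply a proof of this lemma; it is stated as a known result with citations to \cite{Jp-DCDS-2017,AF-Pa-2003}, so there is no ``paper's own proof'' to compare against. Your Fourier-splitting argument for the $L^{\infty}$ interpolation, the Gagliardo--Nirenberg--Sobolev plus interpolation argument for the embedding, and the Leibniz/H\"older bookkeeping for the two product estimates are exactly the sort of arguments one finds behind such citations.
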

\begin{lem}[see \cite{LZ-Cpaa-2020}]\label{L2.2}
Let $h$ and $g$ be two Schwarz functions. For $k\geq 0$, one has
\begin{align*}
\|\nabla^{k}(gh) \|_{L^r\left(\mathbb{R}^3\right)} & \leq C\|g\|_{L^{r_1}\left(\mathbb{R}^3\right)}\|\nabla^{k}h\|_{L^{r_2}
\left(\mathbb{R}^3\right)}+C\|h\|_{L^{r_3}\left(\mathbb{R}^3\right)}\|\nabla^{k}g
\|_{L^{r_4}\left(\mathbb{R}^3\right)},\\
\|\nabla^{k}(gh)-g\nabla^k h \|_{L^r\left(\mathbb{R}^3\right)} & \leq C\|\nabla g\|_{L^{r_1}\left(\mathbb{R}^3\right)}\|\nabla^{k-1}h\|_{L^{r_2}
\left(\mathbb{R}^3\right)}+C\|h\|_{L^{r_3}\left(\mathbb{R}^3\right)}\|\nabla^{k}g\|_{L^{r_4}
\left(\mathbb{R}^3\right)},    
\end{align*}
with $1<r,r_2,r_4<\infty$ and $r_i(1\leq i\leq 4)$ satisfy the following identity:
\begin{align*}
\frac{1}{r_1}+\frac{1}{r_2}=\frac{1}{r_3}+\frac{1}{r_4}=\frac{1}{r}.   
\end{align*}
\end{lem}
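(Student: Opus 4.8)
\emph{Proof proposal.} Since $\nabla^{k}$ here is an integer-order derivative, the plan is to base everything on the Leibniz formula
\[
\nabla^{k}(gh)=\sum_{j=0}^{k}\binom{k}{j}\,\nabla^{j}g\,\nabla^{k-j}h
\]
(written schematically, ignoring harmless combinatorial constants and contractions of indices), combined with Hölder's inequality and the Gagliardo–Nirenberg interpolation inequality on $\mathbb{R}^{3}$; the Schwarz assumption makes every manipulation legitimate, and the resulting bounds extend to general functions by density. For $k=0$ the first inequality is just Hölder and the commutator vanishes, so one may take $k\ge1$. In the Leibniz sum the two extreme terms are handled directly by Hölder: the $j=0$ term gives $\|g\,\nabla^{k}h\|_{L^{r}}\le\|g\|_{L^{r_{1}}}\|\nabla^{k}h\|_{L^{r_{2}}}$ and the $j=k$ term gives $\|h\,\nabla^{k}g\|_{L^{r}}\le\|h\|_{L^{r_{3}}}\|\nabla^{k}g\|_{L^{r_{4}}}$; these are precisely the two terms on the right-hand side.

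For an intermediate index $0<j<k$ I would estimate $\|\nabla^{j}g\,\nabla^{k-j}h\|_{L^{r}}$ by Hölder in $L^{p_{j}}\cdot L^{q_{j}}$, choosing
\[
\frac{1}{p_{j}}=\Bigl(1-\frac{j}{k}\Bigr)\frac{1}{r_{1}}+\frac{j}{k}\frac{1}{r_{4}},\qquad
\frac{1}{q_{j}}=\frac{k-j}{k}\frac{1}{r_{2}}+\frac{j}{k}\frac{1}{r_{3}}.
\]
A one-line computation using $\frac{1}{r_{1}}+\frac{1}{r_{2}}=\frac{1}{r_{3}}+\frac{1}{r_{4}}=\frac{1}{r}$ shows $\frac{1}{p_{j}}+\frac{1}{q_{j}}=\frac{1}{r}$, and $1<r,r_{2},r_{4}<\infty$ forces $p_{j},q_{j}\in(1,\infty)$, so these exponents are admissible for Gagliardo–Nirenberg (an endpoint $r_{1}=\infty$ or $r_{3}=\infty$ being harmless). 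Interpolation then gives
\[
\|\nabla^{j}g\|_{L^{p_{j}}}\le C\|g\|_{L^{r_{1}}}^{1-j/k}\|\nabla^{k}g\|_{L^{r_{4}}}^{j/k},\qquad
\|\nabla^{k-j}h\|_{L^{q_{j}}}\le C\|h\|_{L^{r_{3}}}^{j/k}\|\nabla^{k}h\|_{L^{r_{2}}}^{(k-j)/k},
\]
and multiplying these, then applying Young's inequality with exponents $\tfrac{k}{k-j}$ and $\tfrac{k}{j}$, yields $\|\nabla^{j}g\,\nabla^{k-j}h\|_{L^{r}}\le C\|g\|_{L^{r_{1}}}\|\nabla^{k}h\|_{L^{r_{2}}}+C\|h\|_{L^{r_{3}}}\|\nabla^{k}g\|_{L^{r_{4}}}$. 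Summing over $j$ completes the first estimate.

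For the commutator one starts from $\nabla^{k}(gh)-g\,\nabla^{k}h=\sum_{j=1}^{k}\binom{k}{j}\nabla^{j}g\,\nabla^{k-j}h$. The term $j=k$ again gives $\|h\|_{L^{r_{3}}}\|\nabla^{k}g\|_{L^{r_{4}}}$, and when $k=1$ nothing else remains. For $k\ge2$ and $1\le j\le k-1$ I would repeat the previous argument, but now interpolating $\nabla^{j}g$ between the endpoints $\nabla g$ and $\nabla^{k}g$ (legitimate since $1\le j\le k$) and $\nabla^{k-j}h$ between $h$ and $\nabla^{k-1}h$ (legitimate since $0\le k-j\le k-1$); the auxiliary Hölder exponents $\tilde p_{j},\tilde q_{j}$ are chosen so that both the Gagliardo–Nirenberg hypotheses and $\frac{1}{\tilde p_{j}}+\frac{1}{\tilde q_{j}}=\frac{1}{r}$ hold, and once more it is exactly $\frac{1}{r_{1}}+\frac{1}{r_{2}}=\frac{1}{r_{3}}+\frac{1}{r_{4}}=\frac{1}{r}$ that makes this work (indeed for $j=1$ the term $\nabla g\,\nabla^{k-1}h$ is bounded directly by $\|\nabla g\|_{L^{r_{1}}}\|\nabla^{k-1}h\|_{L^{r_{2}}}$). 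Young's inequality then produces $C\|\nabla g\|_{L^{r_{1}}}\|\nabla^{k-1}h\|_{L^{r_{2}}}+C\|h\|_{L^{r_{3}}}\|\nabla^{k}g\|_{L^{r_{4}}}$, and summation finishes the proof.

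The only genuinely delicate point is the bookkeeping: one must pick the interpolation weights and the auxiliary Lebesgue exponents $p_{j},q_{j}$ (resp.\ $\tilde p_{j},\tilde q_{j}$) so that the Gagliardo–Nirenberg constraints and the Hölder relation $1/p+1/q=1/r$ are satisfied \emph{simultaneously}, and this is precisely where the hypotheses $1<r,r_{2},r_{4}<\infty$ and $\frac{1}{r_{1}}+\frac{1}{r_{2}}=\frac{1}{r_{3}}+\frac{1}{r_{4}}=\frac{1}{r}$ are consumed. A route that avoids the index juggling is Bony's paraproduct decomposition $gh=T_{g}h+T_{h}g+R(g,h)$, each summand being estimated by Bernstein's inequality and the $L^{p}$-boundedness of Littlewood–Paley projections; but since $k$ is a nonnegative integer the elementary Leibniz argument above is the cleaner option, and I would present that.
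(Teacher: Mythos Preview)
Your argument is correct: the Leibniz expansion together with H\"older, Gagliardo--Nirenberg interpolation with weight $j/k$ (respectively $(j-1)/(k-1)$ for the commutator), and Young's inequality is exactly the standard route to these Kato--Ponce--Moser product and commutator bounds, and your choice of auxiliary exponents $p_j,q_j$ (and $\tilde p_j,\tilde q_j$) is precisely the one that makes the scaling close. The paper itself does not prove this lemma at all---it is quoted as a preliminary fact with the citation ``see \cite{LZ-Cpaa-2020}'' and no argument is given---so there is no in-paper proof to compare against; your write-up stands as an independent verification of the cited result.
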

\begin{lem}[see\cite{Jp-DCDS-2017}]\label{L2.3}
Given any $0<\beta_1\neq1$
and $\beta_2>1$, it holds that
\begin{align*}
\int_0^t(1+t-s)^{-\beta_1}(1+s)^{-\beta_2}\mathrm{d}s\leq C(1+t)^{-\min\{\beta_1,\beta_2\}} 
\end{align*}
for all $t\geq 0$.
\end{lem}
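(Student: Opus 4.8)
The plan is to prove this by the standard device of splitting the time integral at the midpoint, estimating each piece separately, and then distinguishing the two regimes $\beta_1>1$ and $0<\beta_1<1$. Since the asserted inequality is trivially true for $t$ ranging over any bounded set (both sides are then comparable to fixed positive constants), it suffices to establish it for $t\geq 1$; I will tacitly assume this below.

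First I would write $\int_0^t = \int_0^{t/2}+\int_{t/2}^t =: I_1+I_2$. On $[0,t/2]$ one has $1+t-s\geq 1+t/2\sim 1+t$, hence $(1+t-s)^{-\beta_1}\leq C(1+t)^{-\beta_1}$, while $\int_0^{t/2}(1+s)^{-\beta_2}\,\mathrm{d}s\leq \int_0^\infty(1+s)^{-\beta_2}\,\mathrm{d}s=(\beta_2-1)^{-1}<\infty$ precisely because $\beta_2>1$; therefore $I_1\leq C(1+t)^{-\beta_1}$. For $I_2$, on $[t/2,t]$ one has $1+s\geq 1+t/2\sim 1+t$, so $(1+s)^{-\beta_2}\leq C(1+t)^{-\beta_2}$, and after the substitution $\tau=t-s$, $\int_{t/2}^t(1+t-s)^{-\beta_1}\,\mathrm{d}s=\int_0^{t/2}(1+\tau)^{-\beta_1}\,\mathrm{d}\tau$. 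Here the hypothesis $\beta_1\neq 1$ is used: if $\beta_1>1$ this integral is bounded by $(\beta_1-1)^{-1}$, giving $I_2\leq C(1+t)^{-\beta_2}$; if $0<\beta_1<1$ it equals $(1-\beta_1)^{-1}\bigl[(1+t/2)^{1-\beta_1}-1\bigr]\leq C(1+t)^{1-\beta_1}$, giving $I_2\leq C(1+t)^{1-\beta_1-\beta_2}$.

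It remains to assemble the two bounds. When $\beta_1>1$, $I_1+I_2\leq C(1+t)^{-\beta_1}+C(1+t)^{-\beta_2}\leq C(1+t)^{-\min\{\beta_1,\beta_2\}}$. When $0<\beta_1<1$ we have $\min\{\beta_1,\beta_2\}=\beta_1$ since $\beta_2>1$; the bound $I_1\leq C(1+t)^{-\beta_1}$ already has the right rate, and $I_2\leq C(1+t)^{1-\beta_1-\beta_2}=C(1+t)^{-\beta_1}(1+t)^{-(\beta_2-1)}\leq C(1+t)^{-\beta_1}$ because $\beta_2-1>0$. In both cases the claimed estimate follows, which completes the proof.

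I do not expect a genuine obstacle here: the statement is an elementary convolution-type decay inequality. The only points needing a little care are keeping the case split $\beta_1\gtrless 1$ straight when combining $I_1$ and $I_2$, handling the small-$t$ range separately (so that one may freely use $1+t/2\sim 1+t$), and noting why $\beta_1=1$ must be excluded, namely that $\int_0^{t/2}(1+\tau)^{-1}\,\mathrm{d}\tau\sim\log(1+t)$ would introduce a logarithmic loss incompatible with a clean power bound.
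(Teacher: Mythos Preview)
Your proof is correct and is exactly the standard midpoint-splitting argument for this type of convolution decay estimate. Note that the paper does not actually supply its own proof of this lemma: it is stated with a citation to \cite{Jp-DCDS-2017} and used as a known auxiliary fact, so there is no in-paper argument to compare against. Your write-up would serve perfectly well as a self-contained proof here.
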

\begin{lem}[see\cite{AF-Pa-2003}]\label{L2.4}
Suppose that $1\leq r\leq s\leq q\leq \infty$, and
\begin{align*}
\frac{1}{s}=\frac{\zeta}{r}+\frac{1-\zeta}{q},
\end{align*}
where $0\leq \zeta\leq 1$.
Let $f\in L^r(\mathbb{R}^3)\cap L^q(\mathbb{R}^3)$. Then $f\in L^s(\mathbb{R}^3)$, and
\begin{align*}
\|f\|_{L^s}\leq \|f\|_{L^r}^\zeta \|f\|_{L^q}^{1-\zeta}.    
\end{align*}
\end{lem}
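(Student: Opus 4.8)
The plan is to derive the inequality from a single application of H\"older's inequality, after disposing of the degenerate parameter values. First I would treat the endpoints $\zeta=0$ and $\zeta=1$: the relation $\frac1s=\frac{\zeta}{r}+\frac{1-\zeta}{q}$ then forces $s=q$ or $s=r$ respectively, so the asserted bound collapses to the trivial identity $\|f\|_{L^s}\le\|f\|_{L^s}$. Similarly, if $s=\infty$ the relation $0=\frac{\zeta}{r}+\frac{1-\zeta}{q}$ (a sum of two nonnegative terms) forces $\zeta=0$ whenever $r<\infty$ and $\zeta=1$ whenever $q<\infty$, so either we are back in an endpoint case or $r=q=s=\infty$ and the claim is trivial. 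Hence from now on I may assume $0<\zeta<1$ and $s<\infty$, and in particular $r<\infty$ since $r\le s$.

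In the generic subcase $0<\zeta<1$ with $r,q<\infty$, I would use the pointwise factorization $|f|^{s}=|f|^{\zeta s}\,|f|^{(1-\zeta)s}$ together with H\"older's inequality for the conjugate pair $p_1=\frac{r}{\zeta s}$, $p_2=\frac{q}{(1-\zeta)s}$. These exponents are admissible because
\[
\frac{1}{p_1}+\frac{1}{p_2}=s\Big(\frac{\zeta}{r}+\frac{1-\zeta}{q}\Big)=1
\]
by hypothesis, and both lie strictly in $(1,\infty)$: finiteness is clear since $\zeta s,(1-\zeta)s>0$, while $\zeta s<r$ and $(1-\zeta)s<q$ follow from the relation together with $0<\zeta<1$ and the finiteness of $r$ and $q$. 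H\"older's inequality then gives
\[
\int_{\mathbb{R}^3}|f|^{s}\,\mathrm{d}x\le\Big(\int_{\mathbb{R}^3}|f|^{r}\,\mathrm{d}x\Big)^{\zeta s/r}\Big(\int_{\mathbb{R}^3}|f|^{q}\,\mathrm{d}x\Big)^{(1-\zeta)s/q},
\]
and raising to the power $1/s$ yields exactly $\|f\|_{L^s}\le\|f\|_{L^r}^{\zeta}\|f\|_{L^q}^{1-\zeta}$; since $f\in L^r\cap L^q$ the right-hand side is finite, so $f\in L^s(\mathbb{R}^3)$.

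It remains to handle the subcase $q=\infty$ (still with $0<\zeta<1$, $s<\infty$), where the relation reads $\frac1s=\frac{\zeta}{r}$, i.e. $\zeta s=r$. Here I would argue directly: almost everywhere $|f|^{s}=|f|^{r}\,|f|^{(1-\zeta)s}\le\|f\|_{L^\infty}^{(1-\zeta)s}\,|f|^{r}$, so integrating over $\mathbb{R}^3$ and taking the $1/s$-th power gives $\|f\|_{L^s}\le\|f\|_{L^\infty}^{1-\zeta}\|f\|_{L^r}^{\zeta}$, which is the claim. As for the main obstacle: there really is none here — the entire content is the correct choice and bookkeeping of the H\"older exponents $p_1,p_2$, together with the separate treatment of the endpoint values of $\zeta$ and of $s=\infty$; once those are in place the estimate is immediate.
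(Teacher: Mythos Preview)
Your proof is correct and is the standard textbook derivation via H\"older's inequality; the case splits (endpoint $\zeta$, $s=\infty$, $q=\infty$) are handled cleanly and the choice of conjugate exponents $p_1=r/(\zeta s)$, $p_2=q/((1-\zeta)s)$ is exactly right. The paper itself does not supply a proof of this lemma but simply cites Adams--Fournier, so there is nothing further to compare.
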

\begin{lem}[see Appendix in\cite{Wwj-siam-2021}]\label{L2.5}
For any  $f \in H^m(\mathbb{R}^3)$,
it holds that
\begin{align*}
\|\nabla^k f^l\|_{L^2}\leq&\, r_0^{k-k_0}\|\nabla^{k_0 }f^l\|_{L^2},
\quad \|\nabla^k f^l\|_{L^2}\leq \|\nabla^k f\|_{L^2}, \nonumber\\
\|\nabla^k f^h\|_{L^2}\leq&\, \frac{1}{R_0^{k_1-k}}\|\nabla^{k_1 }f^h\|_{L^2},
\quad \|\nabla^k f^h\|_{L^2}\leq \|\nabla^k f\|_{L^2}, \nonumber\\
r_0^k\|f^m\|_{L^2}\leq&\, \|\nabla^k f^m\|_{L^2}\leq R_0^k\|f^m\|_{L^2},
\quad \|\nabla^k f^m\|_{L^2}\leq \|\nabla^k f\|_{L^2},
\end{align*}
where the integers $k,k_0,k_1$ and $m$ satisfy $k_0\leq k\leq k_1\leq m$.
\end{lem}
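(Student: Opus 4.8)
The plan is to pass to the Fourier side by Plancherel's theorem and exploit the support of the symbols of the three frequency pieces. Recalling $\widehat{f^l}(\xi)=\phi_0(\xi)\widehat f(\xi)$, $\widehat{f^m}(\xi)=\bigl(1-\phi_0(\xi)-\phi_1(\xi)\bigr)\widehat f(\xi)$ and $\widehat{f^h}(\xi)=\phi_1(\xi)\widehat f(\xi)$, the Plancherel identity
\[
\|\nabla^{j}g\|_{L^2}^{2}=\int_{\mathbb{R}^3}|\xi|^{2j}\,|\widehat g(\xi)|^{2}\,\dd\xi
\]
turns each quantity appearing in the statement into a weighted $L^2$-integral, so that the whole lemma reduces to a pointwise comparison of powers of $|\xi|$ on the (compact, resp. annular, resp. conical) support of the relevant symbol, together with the trivial bounds $0\le\phi_0,\ 1-\phi_0-\phi_1,\ \phi_1\le 1$.

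Concretely, I would argue as follows. Since $\mathrm{supp}\,\widehat{f^l}\subset\{|\xi|\le r_0\}$ and $k-k_0\ge0$, on that set $|\xi|^{2(k-k_0)}\le r_0^{2(k-k_0)}$, so
\[
\|\nabla^{k}f^l\|_{L^2}^{2}=\int_{|\xi|\le r_0}|\xi|^{2(k-k_0)}\,|\xi|^{2k_0}\,|\widehat{f^l}(\xi)|^{2}\,\dd\xi\le r_0^{2(k-k_0)}\,\|\nabla^{k_0}f^l\|_{L^2}^{2},
\]
and $0\le\phi_0\le1$ gives $|\widehat{f^l}(\xi)|\le|\widehat f(\xi)|$ pointwise, hence $\|\nabla^{k}f^l\|_{L^2}\le\|\nabla^{k}f\|_{L^2}$. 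The high-frequency bounds are dual: $\widehat{f^h}$ is supported away from the origin (where $\phi_1$ vanishes), so there $|\xi|^{-2(k_1-k)}$ is bounded, up to an absolute constant, by $R_0^{-2(k_1-k)}$ because $k_1-k\ge0$; writing $|\xi|^{2k}=|\xi|^{-2(k_1-k)}|\xi|^{2k_1}$ and integrating gives $\|\nabla^{k}f^h\|_{L^2}\le R_0^{-(k_1-k)}\|\nabla^{k_1}f^h\|_{L^2}$, and $0\le\phi_1\le1$ gives $\|\nabla^{k}f^h\|_{L^2}\le\|\nabla^{k}f\|_{L^2}$. Finally $1-\phi_0-\phi_1$ vanishes both where $\phi_0\equiv1$ and where $\phi_1\equiv1$, so $\widehat{f^m}$ is supported on an annulus on which $|\xi|$ is comparable to $r_0$ from below and to $R_0$ from above; multiplying $|\widehat{f^m}(\xi)|^2$ by $|\xi|^{2k}$ and integrating then yields the two-sided estimate $r_0^{k}\|f^m\|_{L^2}\le\|\nabla^{k}f^m\|_{L^2}\le R_0^{k}\|f^m\|_{L^2}$, while $0\le1-\phi_0-\phi_1\le1$ gives $\|\nabla^{k}f^m\|_{L^2}\le\|\nabla^{k}f\|_{L^2}$.

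I do not expect any genuine obstacle here; the statement is purely about Fourier localization. The only point that requires care is the bookkeeping of the Fourier supports: one must check that the monomial $|\xi|^{\pm}$ being estimated is monotone in the correct direction on the region in play (which is exactly what the chain $k_0\le k\le k_1\le m$ secures, and which also makes all the derivatives appearing well defined for $f\in H^m$), and one must absorb the harmless dyadic factors coming from the precise cut-off thresholds $r_0/2,\ r_0,\ R_0/2,\ R_0+1$ into a convenient a priori choice of $r_0$ and $R_0$ in order to recover the clean constants displayed in the statement.
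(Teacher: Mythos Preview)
The paper does not give its own proof of this lemma; it simply cites the appendix of \cite{Wwj-siam-2021}. Your Plancherel argument is exactly the standard (and essentially the only) way to establish these frequency-localized bounds, and it is correct in spirit.

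The one point worth flagging explicitly is the constants. With the cut-offs as defined in the paper, $\widehat{f^h}$ is supported in $\{|\xi|\ge R_0/2\}$ and $\widehat{f^m}$ in $\{r_0/2\le|\xi|\le R_0+1\}$, so the raw Plancherel computation gives
\[
\|\nabla^{k}f^h\|_{L^2}\le \Big(\tfrac{2}{R_0}\Big)^{k_1-k}\|\nabla^{k_1}f^h\|_{L^2},\qquad
\Big(\tfrac{r_0}{2}\Big)^{k}\|f^m\|_{L^2}\le\|\nabla^{k}f^m\|_{L^2}\le(R_0+1)^{k}\|f^m\|_{L^2},
\]
rather than the clean powers of $r_0$ and $R_0$ displayed in the statement. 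You already noticed this and proposed absorbing the dyadic factors into the choice of $r_0,R_0$; that is fine, but strictly speaking one cannot make the \emph{stated} inequalities hold with those exact constants for the cut-offs as written. In the paper this is harmless because every use of the lemma feeds into an estimate with an unspecified constant $C$ or into the later choice \eqref{G2.2}--\eqref{G2.3} of $r_0,R_0$, so the extra factors of $2$ or $(R_0+1)/R_0$ are immaterial. It would be cleanest either to state the lemma with generic constants $C=C(k,k_0,k_1)$, or to redefine the thresholds in $\phi_0,\phi_1$ so that the supports match $r_0$ and $R_0$ on the nose.
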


\section{Global existence of strong solutions }

In this section, we will prove the global existence of strong
solutions to the Cauchy problem \eqref{I-2}--\eqref{I--2}
in the whole space $\mathbb{R}^3$.

\subsection{A priori assumption}
To begin with, we assume that
\begin{align}\label{G3.1}
\sup_{0\leq t\leq T}\left\|(\rho,u,\theta,\eta)\right\|_{H^{2}}\leq\delta,
\end{align}
with a generic constant $0<\delta<1$  sufficiently small, and $(\rho,u,\theta,\eta)$
is strong solution to the problem  \eqref{I-2}--\eqref{I--2} on
$0\leq t\leq T$ for any $T>0$.

Notice that the problem  \eqref{I-2}--\eqref{I--2} can be 
expressed as
\begin{equation}\label{I-3}
\left\{
\begin{aligned}
& \partial_t \rho+{\rm div}u=\mathcal{N}_1, \\
& \partial_t u+\nabla\rho+\nabla\theta-\Delta u-2\nabla {\rm div}u+\nabla\eta=\mathcal{N}_2, \\
& \partial_t \theta+{{\rm div}u}-\Delta \theta+4\theta-\eta=\mathcal{N}_3,\\
& \partial_t \eta-\Delta\eta+\eta-4\theta=\mathcal{N}_4,
\end{aligned}\right.
\end{equation}
with the initial data
\begin{align*}
(\rho,u,\theta,\eta)(x,t)|_{t=0}=\,&( \rho_{0}(x),u_{0}(x),\theta_{0}(x),\eta_0(x))\nonumber\\
:=\,&(\varrho_{0}(x)-1,u_{0}(x),\Theta_{0}(x)-1, n_0(x)-1 ),
\end{align*}
where the nonlinear terms $\mathcal{N}_i(1\leq i\leq 4)$ are defined as
\begin{equation*}
\left\{
\begin{aligned}
\mathcal{N}_1:=&\,-\rho{\rm div}u-\nabla\rho\cdot u, \\
\mathcal{N}_2:=&\,-u\cdot\nabla u-\big(g(\rho)+h(\rho)\theta\big)\nabla\rho+g(\rho)\big(\Delta u+2\nabla{\rm div} u-\nabla\eta\big), \\
\mathcal{N}_3:=&\,g(\rho)\big(\Delta \theta+\eta-4\theta\big)-\theta{\rm div}u-u\cdot\nabla\theta+h(\rho)\big(({\rm div}u)^2+2D\cdot D-\theta^4-4\theta^3-6\theta^2\big),\\
\mathcal{N}_4:=&\,\theta^4+4\theta^3+6\theta^2, 
\end{aligned}\right.
\end{equation*}
and the nonlinear functions $g(\rho)$ and $h(\rho)$ are defined as  
\begin{equation*}
\left.
\begin{aligned}
g(\rho)=&\,\frac{1}{1+\rho}-1, \ \
h(\rho)=&\,\frac{1}{1+\rho}.
\end{aligned}\right.
\end{equation*}
Obviously, $g$ and $h$ own the following properties  
\begin{gather*}
|g(\rho)|\leq C\rho,\quad |h(\rho)|\leq C, \\
 |g^{(k)}(\rho)|,\   |h^{(k)}(\rho)|\leq C \ \  \mathrm{for\  any}\ \ k\geq1.    
\end{gather*}
Moreover, by Sobolev's inequality and \eqref{G3.1}, we obtain
\begin{align*}
    \frac{1}{2}\leq \rho+1\leq\frac{3}{2}.
\end{align*}

\medskip 
First, we pay attention to the estimates of strong solutions on the
highest order derivatives.

\subsection{Estimates of strong solutions on the
highest-order derivatives }
\begin{lem}
For the strong solutions to the problem \eqref{I-2}--\eqref{I--2}, there exist
a positive constant $\lambda_1$ such that
\begin{align}\label{G3.9}
&\frac{\mathrm{d}}{\mathrm{d}t}\|\nabla^{2}(\rho,u,\theta)(t)\|_{L^{2}}^{2}+\lambda_1\Big(\|\nabla^{3}u(t)\|_{L^{2}}^{2}+\|\nabla^{3}\theta(t)\|_{L^{2}}^{2}
+\|\nabla^{2}\mathrm{div}u(t)\|_{L^{2}}^{2}+\|\nabla^{2}\theta(t)\|_{L^{2}}^{2}\Big)\nonumber\\
&\,\quad \leq C\delta\|\nabla^{2}(\rho,u,\eta)(t)\|_{L^{2}}^{2}+\frac{1}{4}\Big(\|\nabla^3\eta(t)\|_{L^2}^2+\|\nabla^2\eta(t)\|_{L^2}^2 
    \Big)+C\|\nabla^{2}(u,\theta)(t)\|_{L^{2}}^{2}.   
\end{align}    
\end{lem}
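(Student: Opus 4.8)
The strategy is the classical high-order energy estimate: apply $\nabla^2$ to the reformulated system \eqref{I-3}, take $L^2$ inner products with $\nabla^2\rho$, $\nabla^2 u$, $\nabla^2\theta$ respectively, and add. First I would differentiate $\eqref{I-3}_1$--$\eqref{I-3}_3$ twice in space and pair $\nabla^2\eqref{I-3}_1$ with $\nabla^2\rho$, $\nabla^2\eqref{I-3}_2$ with $\nabla^2 u$, and $\nabla^2\eqref{I-3}_3$ with $\nabla^2\theta$. In the sum of the linear parts, the coupling terms $\langle\nabla^3\theta,\nabla^2 u\rangle$ produced by $\nabla\theta$ in $\eqref{I-3}_2$ and $\langle\nabla^2\mathrm{div}u,\nabla^2\theta\rangle$ from $\mathrm{div}u$ in $\eqref{I-3}_3$ cancel after integration by parts, as do the $\rho$--$u$ couplings $\langle\nabla^3\rho,\nabla^2 u\rangle$ against $\langle\nabla^2\mathrm{div}u,\nabla^2\rho\rangle$ (from $\eqref{I-3}_1$). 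This leaves the good dissipative terms: $\|\nabla^3 u\|_{L^2}^2$ and $\|\nabla^2\mathrm{div}u\|_{L^2}^2$ from the viscous terms $-\Delta u-2\nabla\mathrm{div}u$, and $\|\nabla^3\theta\|_{L^2}^2$ from $-\Delta\theta$, plus $\|\nabla^2\theta\|_{L^2}^2$ from the $+4\theta$ term (the precise sign of the $\theta$-$\eta$ zeroth-order block needs care). The leftover linear terms involving $\eta$ — namely $\langle\nabla^3\eta,\nabla^2 u\rangle$ from $\nabla\eta$ in $\eqref{I-3}_2$ and $\langle\nabla^2\eta,\nabla^2\theta\rangle$ from $-\eta$ in $\eqref{I-3}_3$ — have no intrinsic dissipation, so I would absorb them via Young's inequality into $\tfrac14(\|\nabla^3\eta\|_{L^2}^2+\|\nabla^2\eta\|_{L^2}^2)$ at the cost of a constant times $\|\nabla^3 u\|_{L^2}^2$ and $\|\nabla^2\theta\|_{L^2}^2$, which is why these appear (with small enough coefficient adjustment built into $\lambda_1$) or are subsumed in the $C\|\nabla^2(u,\theta)\|_{L^2}^2$ term on the right.

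For the nonlinear terms $\nabla^2\mathcal{N}_i$, I would estimate $\langle\nabla^2\mathcal{N}_1,\nabla^2\rho\rangle$, $\langle\nabla^2\mathcal{N}_2,\nabla^2 u\rangle$, $\langle\nabla^2\mathcal{N}_3,\nabla^2\theta\rangle$ using Lemma \ref{L2.1} and Lemma \ref{L2.2}. The transport-type terms such as $\langle\nabla^2(\nabla\rho\cdot u),\nabla^2\rho\rangle$ require the standard commutator trick: write $\nabla^2(u\cdot\nabla\rho)=u\cdot\nabla\nabla^2\rho+[\nabla^2,u\cdot\nabla]\rho$, integrate the first piece by parts to move the derivative off $\nabla^2\rho$ (using $\mathrm{div}u$ bounded), and bound the commutator by $C\|\nabla u\|_{H^1}\|\nabla\rho\|_{H^1}\cdot\|\nabla^2\rho\|_{L^2}\lesssim\delta\|\nabla^2\rho\|_{L^2}^2$ via the a priori assumption \eqref{G3.1}. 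The terms carrying the coefficients $g(\rho),h(\rho)$ — e.g. $g(\rho)\Delta u$, $h(\rho)(\mathrm{div}u)^2$, $h(\rho)\theta^2$ — are handled by the product estimate in Lemma \ref{L2.2} together with $|g(\rho)|\le C\rho$, $|g^{(k)}|,|h^{(k)}|\le C$, and the boundedness $\tfrac12\le 1+\rho\le\tfrac32$; each yields a bound of the form $C\delta(\|\nabla^2(\rho,u,\theta,\eta)\|_{L^2}^2+\text{higher dissipation})$, where any genuine third-order factor like $\|\nabla^3 u\|_{L^2}$ or $\|\nabla^3\theta\|_{L^2}$ that appears multiplied by $\delta$ can be absorbed into $\lambda_1(\|\nabla^3 u\|_{L^2}^2+\|\nabla^3\theta\|_{L^2}^2)$ on the left by choosing $\delta$ small. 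The quadratic source $\mathcal{N}_4=\theta^4+4\theta^3+6\theta^2$ contributes $\langle\nabla^2\mathcal{N}_4,\cdot\rangle$ only through the $\theta$-equation coupling or is deferred to the $\eta$-estimate, and is controlled identically.

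The main obstacle is bookkeeping the borderline third-order terms so that the coefficient $\lambda_1$ on the left stays positive. Two sources compete for $\|\nabla^3 u\|_{L^2}^2$ and $\|\nabla^3\theta\|_{L^2}^2$: the Young's-inequality absorption of the non-dissipative $\eta$-couplings (which I want to close into the explicit $\tfrac14$ on the right), and the $C\delta$-nonlinear terms (which I close by smallness of $\delta$). I would first fix how much of the viscous dissipation to spend on the $\eta$-terms — say use half of $\|\nabla^3 u\|_{L^2}^2$ for $\tfrac14\|\nabla^3\eta\|_{L^2}^2$ — then choose $\delta$ small relative to the remaining half so the nonlinear remainder is absorbed, and set $\lambda_1$ to be the surviving fraction. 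A secondary subtlety is getting the sign of the zeroth-order reaction block $4\theta-\eta$ versus $\eta-4\theta$ right so that $\|\nabla^2\theta\|_{L^2}^2$ genuinely appears with a good sign on the left (the $\eta$-contribution $-\langle\nabla^2\eta,\nabla^2\theta\rangle$ and the cross term $+4\|\nabla^2\theta\|_{L^2}^2$ combine, and one uses Cauchy–Schwarz to keep a definite positive multiple of $\|\nabla^2\theta\|_{L^2}^2$ while dumping the cross term into $\tfrac14\|\nabla^2\eta\|_{L^2}^2$). Everything else is routine once these absorptions are organized.
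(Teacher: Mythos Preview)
Your plan is correct and matches the paper's proof. One simplification worth noting: for the linear $\eta$-couplings the paper applies Young's inequality directly as $|\langle\nabla^2 u,\nabla^3\eta\rangle|\le\tfrac18\|\nabla^3\eta\|_{L^2}^2+2\|\nabla^2 u\|_{L^2}^2$ (and similarly for the $\theta$--$\eta$ term), placing the $\|\nabla^2(u,\theta)\|_{L^2}^2$ cost directly into the $C\|\nabla^2(u,\theta)\|_{L^2}^2$ term on the right rather than absorbing into the viscous dissipation, so the coefficient bookkeeping you flag as the main obstacle never actually arises.
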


\begin{proof}
It follows from  \eqref{I-3}$_2$--\eqref{I-3}$_4$ that 
\begin{align}\label{G3.aa}
&\frac{1}{2}\frac{\mathrm{d}}{\mathrm{d}t}\|\nabla^{2}(\rho,u,\theta)\|_{L^{2}}^{2}+\|\nabla^{3}( u,\theta)\|_{L^{2}}^{2}+2\|\nabla^{2}\mathrm{div}u\|_{L^{2}}^{2}+4\|\nabla^{2}\theta\|_{L^{2}}^{2}\nonumber\\
&\,\quad =-\langle\nabla^{2}{u},\nabla^{3}\eta\rangle+\langle\nabla^{2}{\theta},
\nabla^{2}\eta\rangle+\langle\nabla^{2}{\rho},\nabla^{2}\mathcal{N}_{1}\rangle
+\langle\nabla^{2}{u},\nabla^{2}\mathcal{N}_{2}\rangle+\langle\nabla^{2}{\theta},\nabla^{2}\mathcal{N}_{3}\rangle.    
\end{align}
Utilizing Young's inequality yields
\begin{align}\label{G3.11}
|\langle\nabla^{2}{u},\nabla^{3}\eta\rangle|\leq&\, \frac{1}{8}\|\nabla^3\eta\|_{L^2}^2+2\|\nabla^2 u\|_{L^2}^2,\\
|\langle\nabla^{2}{\eta},\nabla^{2}\theta\rangle|\leq&\, \frac{1}{8}\|\nabla^2\eta\|_{L^2}^2+2\|\nabla^2 \theta\|_{L^2}^2.\label{G3.11-1}
\end{align}
By the definition of $\mathcal{N}_1$, we get
\begin{align}\label{G3.12}
\langle\nabla^{2}{\rho},\nabla^{2}\mathcal{N}_{1}\rangle=-\langle \nabla^2\rho     ,\nabla^2(\rho{\rm div}u)\rangle-\langle \nabla^2\rho     ,\nabla^2(\nabla\rho\cdot u)\rangle.   
\end{align}
Thanks to Lemmas \ref{L2.1}--\ref{L2.2}, we obtain
\begin{align}
|\langle \nabla^2\rho     ,\nabla^2(\rho{\rm div}u)\rangle|\leq&\,  C\|\nabla^{2}{\rho}\|_{L^{2}}\Big(\|\nabla^{2}{\rho}\|_{L^{2}}\|\mathrm{div}{u}\|_{L^{\infty}}+\|{\rho}\|_{L^{\infty}}\|\nabla^{2}\mathrm{div}{u}\|_{L^{2}}\Big)\nonumber\\
\leq&\, C\|\nabla^{2}{\rho}\|_{L^{2}}\Big(\|\nabla^{2}{\rho}\|_{L^{2}}\|\nabla^2{u}\|_{H^{1}}+\|{\rho}\|_{H^{2}}\|\nabla^{3}{u}\|_{L^{2}}\Big)\nonumber\\
\leq&\, C\delta\big(\|\nabla^{2}{\rho}\|_{L^{2}}^{2}+\|\nabla^{3}{u}\|_{L^{2}}^{2}\big),\label{G3.13} \\
|\langle \nabla^2\rho     ,\nabla^2(\nabla\rho\cdot u)\rangle|\leq&\,
|\langle \nabla^2\rho     ,\nabla^2\nabla\rho\cdot u\rangle|+  
|\langle \nabla^2\rho     ,\nabla^2(\nabla\rho\cdot u)-\nabla^2\nabla\rho\cdot u\rangle|
\nonumber\\
\leq&\,  C\|\nabla^{2}{\rho}\|_{L^{2}}\Big(\|\nabla{\rho}\|_{L^{3}}\|\nabla^2{u}\|_{L^{6}}+\|{\nabla^2\rho}\|_{L^{2}}\|\nabla {u}\|_{L^{\infty}}\Big)\nonumber\\
\leq&\, C\|\nabla^{2}{\rho}\|_{L^{2}}\Big(\|{\rho}\|_{H^{2}}\|\nabla^3{u}\|_{L^{2}}+\|\nabla^2{\rho}\|_{L^{2}}\|\nabla^{2}{u}\|_{H^{1}}\Big)\nonumber\\
\leq&\, C\delta\big(\|\nabla^{2}{\rho}\|_{L^{2}}^{2}+\|\nabla^{3}{u}\|_{L^{2}}^{2}\big). \label{G3.14}
\end{align}
Here, we have used the fact:
\begin{align*}
\|\nabla^2{u}\|_{H^{1}}\|\nabla^{2}{\rho}\|_{L^{2}}^{2}\leq&\, C\delta\|\nabla^{2}{\rho}\|_{L^{2}}^{2}+C\delta\|\nabla^{3}{u}\|_{L^2}\|\nabla^{2}{\rho}\|_{L^{2}}\nonumber\\
\leq&\,
C\delta\big(\|\nabla^{2}{\rho}\|_{L^{2}}^{2}+\|\nabla^{3}{u}\|_{L^{2}}^{2}\big).   
\end{align*}
Substituting the above estimates 
 \eqref{G3.13}--\eqref{G3.14} 
into \eqref{G3.12},
we arrive at
\begin{align}\label{G3.16}
|\langle\nabla^2\rho,\nabla^2\mathcal{N}_1            \rangle|\leq
C\delta\big(\|\nabla^{2}{\rho}\|_{L^{2}}^{2}+\|\nabla^{3}{u}\|_{L^{2}}^{2}\big).  
\end{align}
By applying integration by parts and Young's inequality,  we deduce that
\begin{align}\label{G3.17}
|\langle\nabla^2{u},\nabla^2\mathcal{N}_2\rangle|\leq \frac{1}{4}\|\nabla^3{u}\|_{L^2}^2+\|\nabla\mathcal{N}_2\|_{L^2}^2.    
\end{align}
Notice that
\begin{align}\label{G3.18}
\|\nabla\mathcal{N}_{2}\|_{L^{2}}\leq\,& C\|\nabla({u}\cdot\nabla{u})\|_{L^{2}}+C\|\nabla(g({\rho})\nabla{\rho})\|_{L^{2}}+C\|\nabla(g({\rho})\Delta{u})\|_{L^{2}}\nonumber\\
&\,+C\|\nabla(g({\rho})\nabla{\rm div}{u})\|_{L^{2}}+C\|\nabla(g({\rho})\nabla\eta)\|_{L^{2}}+C\|\nabla(h({\rho})\theta\nabla\rho)\|_{L^{2}}. 
\end{align}
Then, the first two terms on the right hand-side of \eqref{G3.18} can be estimated as follows:
\begin{align*}\nonumber
\|\nabla({u}\cdot\nabla{u})\|_{L^{2}}\leq&\, C\|\nabla{u}\|_{L^{3}}\|\nabla{u}\|_{L^{6}}+C\|{u}\|_{L^{\infty}}\|\nabla^{2}{u}\|_{L^{2}}\nonumber\\
\leq&\, C\|{u}\|_{H^{2}}\|\nabla^{2}{u}\|_{L^{2}}\nonumber\\
\leq&\, C\delta\|\nabla^{2}{u}\|_{L^{2}}, \nonumber\\
\|\nabla(g({\rho})\nabla{\rho})\|_{L^{2}}\leq&\,  C\|\nabla g({\rho})\|_{L^{3}}\|\nabla{\rho}\|_{L^{6}}+C\|g({\rho})\|_{L^{\infty}}\|\nabla^{2}{\rho}\|_{L^{2}}\nonumber\\
\leq&\, C\|\nabla{\rho}\|_{L^{3}}\|\nabla{\rho}\|_{L^{6}}+C\|{\rho}\|_{L^{\infty}}\|\nabla^{2}{\rho}\|_{L^{2}}\nonumber\\
\leq&\, C\|{\rho}\|_{H^{2}}\|\nabla^{2}{\rho}\|_{L^{2}}\nonumber\\
\leq&\, C\delta\|\nabla^{2}{\rho}\|_{L^{2}}.
\end{align*}
Similarly, we have 
\begin{align*}\nonumber
\|\nabla(g({\rho})\Delta{u})\|_{L^{2}}+\|\nabla(g({\rho})\nabla{\rm div}{u})\|_{L^{2}}\leq&\, C\delta\|\nabla^3{u}\|_{L^2}, \nonumber\\  
\|\nabla(g({\rho})\nabla\eta)\|_{L^{2}}\leq&\, C\delta \|\nabla^2\eta\|_{L^2},
\end{align*}
and
\begin{align*}\nonumber
\|\nabla(h({\rho})\theta\nabla\rho)\|_{L^{2}}\leq&\, C\|\nabla h({\rho})\|_{L^{3}}\|\theta\nabla\rho\|_{L^{6}}+C\|h({\rho})\|_{L^{\infty}}\|\nabla(\theta\nabla\rho)\|_{L^{2}}\nonumber\\
\leq&\, C\|\nabla{\rho}\|_{L^{3}}\|\theta\|_{L^{\infty}}\|\nabla\rho\|_{L^6}+C\Big(\|\nabla\theta\|_{L^{6}}\|\nabla\rho\|_{L^{3}}+\|\theta\|_{L^{\infty}}\|\nabla^2\rho\|_{L^{2}}\Big)\nonumber\\
\leq&\, C\delta\big(\|\nabla^{2}{\rho}\|_{L^{2}}+\|\nabla^{2}\theta\|_{L^{2}}\big).    
\end{align*}
Plugging the above  estimates 
 into \eqref{G3.18} implies that 
\begin{align*}
\|\nabla\mathcal{N}_{2}\|_{L^{2}}\leq C\delta\big(\|\nabla^{2}({\rho},{u},\theta,\eta)\|_{L^{2}}+\|\nabla^{3}{u}\|_{L^{2}}\big),   
\end{align*}
which yields
\begin{align}\label{G3.23}
|\langle\nabla^2{u},\nabla^2\mathcal{N}_2\rangle|\leq C\delta\big(\|\nabla^2({\rho},{u},\theta,\eta)\|_{L^2}^2+\|\nabla^3{u}\|_{L^2}^2\big)+\frac{1}{4}\|\nabla^3 u\|_{L^2}^2.   
\end{align}
Similar to \eqref{G3.17}, we have
\begin{align}\label{G3.17-a}
|\langle\nabla^2\theta,\nabla^2\mathcal{N}_3\rangle|\leq \frac{1}{4}\|\nabla^3\theta\|_{L^2}^2+\|\nabla\mathcal{N}_3\|_{L^2}^2.        
\end{align}
Thus, we need to give the estimates of $\|\nabla \mathcal{N}_3\|_{L^2}$. We have 
\begin{align}\label{G3.25}
\|\nabla\mathcal{N}_{3}\|_{L^{2}}\leq\,& C\|\nabla({u}\cdot\nabla\theta)\|_{L^{2}}+C\|\nabla(\theta{\rm div}u)\|_{L^{2}}+C\|\nabla(g({\rho})\Delta{\theta})\|_{L^{2}}\nonumber\\
&\,+C\|\nabla(h({\rho})D\cdot D)\|_{L^{2}}+C\|\nabla(g({\rho})\eta)\|_{L^{2}}+C\|\nabla(g({\rho})\theta)\|_{L^{2}}\nonumber\\
&\,+C\|\nabla\big(h(\rho)(\theta^4+\theta^3+\theta^2)           \big)\|_{L^2}+C\|\nabla\big(h({\rho})({\rm div}{u})^2\big)\|_{L^{2}}. 
\end{align}
We start to handle the right hand-side of \eqref{G3.25} one by one.
Thanks to  Lemmas \ref{L2.1}--\ref{L2.2}, we have
\begin{align*}
\|\nabla(u\cdot\nabla\theta)\|_{L^{2}}\leq&\, C\|\nabla{\theta}\|_{L^{6}}\|\nabla{u}\|_{L^{3}}+C\|u\|_{L^{\infty}}\|\nabla^2\theta\|_{L^{2}}\nonumber\\
\leq&\, C\delta\|\nabla^{2}({u},\theta)\|_{L^{2}}, \nonumber\\
\|\nabla(\theta{\rm div}u)\|_{L^{2}}\leq&\, C\|\nabla{\theta}\|_{L^{6}}\|{\rm div}{u}\|_{L^{3}}+C\|\theta\|_{L^{\infty}}\|\nabla{\rm div}{u}\|_{L^{2}}\nonumber\\
\leq&\, C\delta\|\nabla^{2}({u},\theta)\|_{L^{2}}, \nonumber\\
\|\nabla(g({\rho})\Delta\theta)\|_{L^{2}}\leq&\,  C\|\nabla g({\rho})\|_{L^{3}}\|\nabla^2\theta\|_{L^{6}}+C\|g({\rho})\|_{L^{\infty}}\|\nabla^{3}\theta\|_{L^{2}}\nonumber\\
\leq&\, C\|\nabla{\rho}\|_{L^{3}}\|\nabla^2{\theta}\|_{L^{6}}+C\|{\rho}\|_{L^{\infty}}\|\nabla^{3}{\theta}\|_{L^{2}}\nonumber\\
\leq&\, C\delta\|\nabla^{3}{\theta}\|_{L^{2}},\nonumber\\
\|\nabla\big(h({\rho})({\rm div}{u})^2\big)\|_{L^{2}}+\|\nabla(h({\rho})D\cdot D)\|_{L^{2}}\leq&\,C\|\nabla h(\rho)\|_{L^6}\|\nabla u\|_{L^6}^2+C\|\nabla^2 u\|_{L^6}\|\nabla u\|_{L^3}\nonumber\\
\leq&\,C\|\nabla\rho\|_{L^6}\|\nabla^2 u\|_{L^2}^2+C\|u\|_{H^2}\|\nabla^3 u\|_{L^2}\nonumber\\
\leq&\,C\delta\big(\|\nabla^2 u\|_{L^2}+\|\nabla^3 u\|_{L^2}           \big),\nonumber\\
\|\nabla(g({\rho})\theta)\|_{L^{2}}+\|\nabla(g({\rho})\eta)\|_{L^{2}}\leq&\,  C\|\nabla g({\rho})\|_{L^{6}}\|(\theta,\eta)\|_{L^{3}}+C\|g({\rho})\|_{L^{3}}\|\nabla(\theta,\eta)\|_{L^{6}}\nonumber\\
\leq&\, C\|\nabla{\rho}\|_{L^{6}}\|(\theta,\eta)\|_{L^{3}}+C\|{\rho}\|_{L^{3}}\|\nabla({\theta},\eta)\|_{L^{6}}\nonumber\\
\leq&\, C\|\nabla^2{\rho}\|_{L^{2}}\|(\theta,\eta)\|_{H^{1}}+C\|{\rho}\|_{H^{1}}\|\nabla^2({\theta},\eta)\|_{L^{2}}\nonumber\\
\leq&\, C\delta\big(\|\nabla^{2}\rho\|_{L^{2}}+\|\nabla^{2}\eta\|_{L^{2}}+\|\nabla^{2}\theta\|_{L^{2}}\big),\nonumber\\
\|\nabla\big(h(\rho)(\theta^4+\theta^3+\theta^2)     \big)\|_{L^2}\leq&\,
C\|\nabla h(\rho)\|_{L^6}\|\theta\|_{L^3}\Big( \|\theta\|_{L^{\infty}}+\|\theta\|_{L^{\infty}}^2+\|\theta\|_{L^{\infty}}^3             \Big)\nonumber\\
&\,+C\|h(\rho)\|_{\infty}\|\nabla(\theta^4+\theta^3+\theta^2)\|_{L^2}\nonumber\\
\leq &\,C\delta\|\nabla^2\rho\|_{L^2}+C\delta\|\nabla\theta\|_{L^6}\Big( 1+\|\theta\|_{L^{\infty}}+\|\theta\|_{L^{\infty}}^2             \Big)\nonumber\\
\leq&\,C\delta\big(\|\nabla^{2}\rho\|_{L^{2}}+\|\nabla^{2}\theta\|_{L^{2}}\big).
\end{align*}
Putting the above estimates into \eqref{G3.25}, and combining it with \eqref{G3.17-a}, we directly get
\begin{align}\label{G3.27}
|\langle\nabla^2{\theta},\nabla^2\mathcal{N}_3\rangle|\leq C\delta\big(\|\nabla^2({\rho},{u},\theta,\eta)\|_{L^2}^2+\|\nabla^3({u},\theta)\|_{L^2}^2\big)+\frac{1}{4}\|\nabla^3\theta\|_{L^2}^2.   
\end{align}
Therefore, by putting \eqref{G3.11}, \eqref{G3.16},  \eqref{G3.23}
and \eqref{G3.27} to \eqref{G3.aa}, we consequently arrive at the desired inequality \eqref{G3.9}.
\end{proof}

\begin{lem}
For the strong solutions to the problem \eqref{I-2}--\eqref{I--2}, it holds that
\begin{align}\label{G3.28}
&\frac{\mathrm{d}}{\mathrm{d}t}\|\nabla^{2}\eta(t)\|_{L^{2}}^{2}+\|\nabla^{3}\eta(t)\|_{L^{2}}^{2}+\|\nabla^{2}\eta(t)\|_{L^{2}}^{2}\leq C\delta\|\nabla^{2}\theta(t)\|_{L^{2}}^{2}+C\|\nabla^{2}\theta(t)\|_{L^{2}}^{2}.   
\end{align}    
\end{lem}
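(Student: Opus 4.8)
The plan is to establish \eqref{G3.28} by a direct second-order parabolic energy estimate on the radiation equation \eqref{I-3}$_4$, in the same spirit as the preceding lemma but somewhat simpler, the only nonlinearity being the polynomial $\mathcal{N}_4=\theta^4+4\theta^3+6\theta^2$. First I would apply $\nabla^2$ to \eqref{I-3}$_4$ and take the $L^2$ inner product with $\nabla^2\eta$. Since $\nabla^2$ commutes with $\Delta$ and $-\langle\Delta\nabla^2\eta,\nabla^2\eta\rangle=\|\nabla^3\eta\|_{L^2}^2$ after integration by parts (no boundary terms over $\mathbb{R}^3$ for $H^2$ solutions), this gives the energy identity
\begin{align*}
\frac{1}{2}\frac{\mathrm{d}}{\mathrm{d}t}\|\nabla^2\eta\|_{L^2}^2+\|\nabla^3\eta\|_{L^2}^2+\|\nabla^2\eta\|_{L^2}^2=4\langle\nabla^2\theta,\nabla^2\eta\rangle+\langle\nabla^2\mathcal{N}_4,\nabla^2\eta\rangle .
\end{align*}
It then remains only to bound the two terms on the right and to absorb the low-order contributions into the left-hand side.

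For the linear coupling term I would use Cauchy--Schwarz and Young's inequality, $4\langle\nabla^2\theta,\nabla^2\eta\rangle\le\frac14\|\nabla^2\eta\|_{L^2}^2+C\|\nabla^2\theta\|_{L^2}^2$, which yields the term $C\|\nabla^2\theta\|_{L^2}^2$ on the right of \eqref{G3.28}. For the nonlinear term, Young's inequality again gives $\langle\nabla^2\mathcal{N}_4,\nabla^2\eta\rangle\le\frac14\|\nabla^2\eta\|_{L^2}^2+\|\nabla^2\mathcal{N}_4\|_{L^2}^2$, so everything reduces to proving
\begin{align*}
\|\nabla^2\mathcal{N}_4\|_{L^2}\le C\delta\|\nabla^2\theta\|_{L^2}.
\end{align*}
Here I would use crucially that $\mathcal{N}_4$ vanishes to second order at $\theta=0$: writing $\mathcal{N}_4=\theta\,h(\theta)$ with $h(\theta)=6\theta+4\theta^2+\theta^3$ and invoking the product estimate of Lemma \ref{L2.2} with $k=2$ gives $\|\nabla^2\mathcal{N}_4\|_{L^2}\le C\|\theta\|_{L^\infty}\|\nabla^2 h(\theta)\|_{L^2}+C\|h(\theta)\|_{L^\infty}\|\nabla^2\theta\|_{L^2}$. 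Since $\|\theta\|_{L^\infty}\le C\|\theta\|_{H^2}\le C\delta$ by Lemma \ref{L2.1} and the a priori assumption \eqref{G3.1}, and since $h(\theta)$ itself carries a factor of $\theta$ (so $\|h(\theta)\|_{L^\infty}\le C\delta$, while a further term-by-term use of Lemmas \ref{L2.1}--\ref{L2.2} gives $\|\nabla^2 h(\theta)\|_{L^2}\le C\|\nabla^2\theta\|_{L^2}$), the claimed bound follows. Equivalently --- and closer to the bookkeeping of the previous lemma --- one may integrate by parts first, $\langle\nabla^2\mathcal{N}_4,\nabla^2\eta\rangle=-\langle\nabla\mathcal{N}_4,\nabla^3\eta\rangle$, and then estimate $\|\nabla\mathcal{N}_4\|_{L^2}\le C\|4\theta^3+12\theta^2+12\theta\|_{L^3}\|\nabla\theta\|_{L^6}\le C\delta\|\nabla^2\theta\|_{L^2}$, just as $\|\nabla\mathcal{N}_2\|_{L^2}$ and $\|\nabla\mathcal{N}_3\|_{L^2}$ were handled above.

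Finally, since $0<\delta<1$ we have $\|\nabla^2\mathcal{N}_4\|_{L^2}^2\le C\delta^2\|\nabla^2\theta\|_{L^2}^2\le C\delta\|\nabla^2\theta\|_{L^2}^2$, and plugging the two bounds into the energy identity the pair of $\frac14\|\nabla^2\eta\|_{L^2}^2$ terms is absorbed by $\|\nabla^2\eta\|_{L^2}^2$ on the left; multiplying through by $2$ then produces exactly \eqref{G3.28}. The single step that calls for any care --- hence the (admittedly mild) main obstacle --- is the nonlinear bound $\|\nabla^2\mathcal{N}_4\|_{L^2}\le C\delta\|\nabla^2\theta\|_{L^2}$: one must be sure that every term generated by two derivatives landing on $\mathcal{N}_4$ retains at least one power of $\theta$, so as to extract the smallness $\delta$, while remaining controlled by $\|\nabla^2\theta\|_{L^2}$ alone rather than by lower-order norms of $\theta$; both are guaranteed by the double zero of $\mathcal{N}_4$ at $\theta=0$ together with the Gagliardo--Nirenberg interpolation encoded in Lemma \ref{L2.2}. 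Everything else is routine integration by parts and Young's inequality.
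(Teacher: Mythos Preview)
Your proposal is correct and essentially matches the paper's proof. The paper proceeds exactly along your ``alternative'' route: after the same energy identity it uses Young's inequality with the splits $4\langle\nabla^2\theta,\nabla^2\eta\rangle\le\tfrac12\|\nabla^2\eta\|_{L^2}^2+8\|\nabla^2\theta\|_{L^2}^2$ and, after one integration by parts, $\langle\nabla^2\eta,\nabla^2\mathcal{N}_4\rangle\le\tfrac12\|\nabla^3\eta\|_{L^2}^2+2\|\nabla\mathcal{N}_4\|_{L^2}^2$, then bounds $\|\nabla\mathcal{N}_4\|_{L^2}\le C\|\theta\|_{L^3}\|\nabla\theta\|_{L^6}(1+\|\theta\|_{L^\infty}+\|\theta\|_{L^\infty}^2)\le C\delta\|\nabla^2\theta\|_{L^2}$, absorbing into $\|\nabla^3\eta\|_{L^2}^2$ rather than $\|\nabla^2\eta\|_{L^2}^2$; your primary variant (direct $\|\nabla^2\mathcal{N}_4\|_{L^2}$ bound) also works and yields a slightly stronger inequality.
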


\begin{proof}
It follows from \eqref{I-3}$_4$ that
\begin{align}\label{G3.29}
&\frac{1}{2}\frac{\mathrm{d}}{\mathrm{d}t}\|\nabla^{2}\eta\|_{L^{2}}^{2}+\|\nabla^{3}\eta\|_{L^{2}}^{2}+\|\nabla^{2}\eta\|_{L^{2}}^{2} =\langle \nabla^2\eta,\nabla^2\mathcal{N}_4\rangle+4\langle \nabla^2\theta,\nabla^2\eta\rangle.
\end{align}
By using integration by parts and  Young's inequality, we have
\begin{align} 
4\langle \nabla^2\theta,\nabla^2\eta\rangle\leq&\, \frac{1}{2}\|\nabla^2\eta\|_{L^2}^2+8\|\nabla^2\theta\|_{L^2}^2, \label{G3.30}\\ 
\langle \nabla^2\eta,\nabla^2\mathcal{N}_4\rangle\leq&\, \frac{1}{2}\|\nabla^3\eta\|_{L^2}^2+2\|\nabla\mathcal{N}_4\|_{L^2}^2.\label{G3.30-1}
\end{align}
Similar to \eqref{G3.25},
\begin{align}\label{G3.31}
\|\nabla\mathcal{N}_4\|_{L^2}\leq&\, C\|\theta\|_{L^3}\|\nabla\theta\|_{L^6}\Big(1+\|\theta\|_{L^{\infty}}+\|\theta\|_{L^{\infty}}^2\Big)\nonumber\\
\leq&\, C\delta\|\nabla^2\theta\|_{L^2}.
\end{align}
Inserting \eqref{G3.30}--\eqref{G3.31} into \eqref{G3.29},
we get \eqref{G3.28}.    
\end{proof}

Combining  \eqref{G3.9} with \eqref{G3.28} gives
\begin{align}\label{G3.32}
&\frac{\mathrm{d}}{\mathrm{d}t}\|\nabla^{2}(\rho,u,\theta,\eta)(t)\|_{L^{2}}^{2}+\lambda_2\Big(\|\nabla^{3}(u,\theta,\eta)(t)\|_{L^{2}}^{2}
+\|\nabla^{2}\mathrm{div}u(t)\|_{L^{2}}^{2}+\|\nabla^{2}(\theta,\eta)(t)\|_{L^{2}}^{2}\Big)\nonumber\\
&\quad \,\leq C\delta\|\nabla^{2}(\rho,u)(t)\|_{L^{2}}^{2}+C\|\nabla^{2}(u,\theta)(t)\|_{L^{2}}^{2},
\end{align}    
for some $0<\lambda_2<1$.

\medskip
Next, we give the estimate of $\nabla^2 \rho$.
\begin{lem}
For the strong solutions to the problem \eqref{I-2}--\eqref{I--2}, it holds that    
\begin{align}\label{G3.33}
&-\frac{\mathrm{d}}{\mathrm{d}t}\langle\nabla{\rm div}{u}(t),\nabla{\rho}^{h}(t)\rangle+\frac{1}{4}\|\nabla^{2}{\rho}(t)\|_{L^{2}}^{2}\nonumber\\
&\,\quad \leq C\Big(\|\nabla^{2}{\rho}^{L}(t)\|_{L^{2}}^{2}+\|\nabla^2(\theta^h,\eta^h)(t)\|_{L^2}^2\Big)\\
&\,\quad \ \ \ +4\Big( \|\nabla^{2}{u}(t)\|_{L^{2}}^{2}+\|\nabla^{3}{u}(t)\|_{L^{2}}^{2}+\|\nabla^{2}\mathrm{div}u(t)\|_{L^{2}}^{2}    \Big)\nonumber\\
&\,\quad\ \ \ \,+C\delta\Big(\|\nabla^2(u,\theta,\eta)\|_{L^2}^2+\|\nabla^3 u\|_{L^2}^2        \Big).   
\end{align}
\end{lem}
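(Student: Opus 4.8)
The plan is to recover the dissipation of $\nabla^2\rho$ from the momentum equation \eqref{I-3}$_2$ by testing it against $\nabla\nabla\mathrm{div}u$ (equivalently, by differentiating and pairing $\nabla\mathrm{div}u$ against $\nabla\rho^h$), the standard ``effective viscous flux'' trick for compressible Navier--Stokes. First I would apply $\nabla$ to \eqref{I-3}$_2$, project onto the high-frequency part (via $\phi_1(D_x)$), take the $L^2$ inner product with $\nabla\mathrm{div}u^h$, and integrate by parts in time to produce the term $-\frac{\mathrm{d}}{\mathrm{d}t}\langle\nabla\mathrm{div}u,\nabla\rho^h\rangle$. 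The point is that the principal term $\langle\nabla(\nabla\rho+\nabla\theta+\nabla\eta),\nabla\mathrm{div}u^h\rangle$ contributes, after integration by parts and using $\phi_1$, a good term $\sim\|\nabla^2\rho^h\|_{L^2}^2$ plus cross terms $\langle\nabla^2\theta^h,\nabla^2\rho^h\rangle$ and $\langle\nabla^2\eta^h,\nabla^2\rho^h\rangle$ that are absorbed by Young's inequality into $\frac{1}{4}\|\nabla^2\rho^h\|_{L^2}^2$ on the left and $C\|\nabla^2(\theta^h,\eta^h)\|_{L^2}^2$ on the right. Since $\|\nabla^2\rho\|_{L^2}^2 \le \|\nabla^2\rho^L\|_{L^2}^2 + \|\nabla^2\rho^h\|_{L^2}^2$ (here $\rho = \rho^L + \rho^h$ with $\rho^L = \rho^l+\rho^m$), the low-medium part $\|\nabla^2\rho^L\|_{L^2}^2$ is simply carried to the right-hand side, which matches the stated inequality.

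Next I would handle the terms coming from the time derivative hitting the other factor and from the viscous/transport terms. Writing out $\frac{\mathrm{d}}{\mathrm{d}t}\langle\nabla\mathrm{div}u,\nabla\rho^h\rangle = \langle\nabla\mathrm{div}u_t,\nabla\rho^h\rangle + \langle\nabla\mathrm{div}u,\nabla\rho^h_t\rangle$, I substitute $\partial_t u$ from \eqref{I-3}$_2$ into the first bracket and $\partial_t\rho$ from \eqref{I-3}$_1$ into the second (using the frequency-localized version $\partial_t\rho^h + \mathrm{div}u^h = \mathcal{N}_1^h$, since $\phi_1(D_x)$ commutes with $\partial_t$ and with differential operators). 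The linear viscous contributions $\langle\nabla(\Delta u + 2\nabla\mathrm{div}u),\nabla\rho^h\rangle$ and the term $\langle\nabla\mathrm{div}u,\nabla\mathrm{div}u^h\rangle$ produce, after integration by parts and Young's inequality, the block $4(\|\nabla^2 u\|_{L^2}^2 + \|\nabla^3 u\|_{L^2}^2 + \|\nabla^2\mathrm{div}u\|_{L^2}^2)$; the mild loss of a derivative here is acceptable because these quantities are already controlled by the dissipation in \eqref{G3.32}. The nonlinear terms $\langle\nabla\mathcal{N}_2^{(\cdot)},\nabla\rho^h\rangle$ and $\langle\nabla\mathrm{div}u,\nabla\mathcal{N}_1^h\rangle$ are estimated exactly as in the previous two lemmas — via Lemmas \ref{L2.1}--\ref{L2.2} and the a priori smallness \eqref{G3.1} — yielding the $C\delta(\|\nabla^2(u,\theta,\eta)\|_{L^2}^2 + \|\nabla^3 u\|_{L^2}^2)$ remainder.

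The main obstacle is bookkeeping the frequency localization consistently: one must verify that $\phi_1(D_x)$ commutes with all the operators in play, that $\|\nabla^2\rho^h\|_{L^2} \sim \|\nabla\mathrm{div}\,\text{(something)}^h\|_{L^2}$-type manipulations are legitimate, and — crucially — that when integrating by parts to move derivatives between $\nabla\mathrm{div}u^h$ and $\nabla\rho^h$ no uncontrolled full-regularity norm of $\rho$ appears (only $\nabla^2\rho$, split into $L$ and $h$ parts). A second delicate point is that the coefficient $\frac14$ in front of $\|\nabla^2\rho\|_{L^2}^2$ and the constant $4$ in front of the $u$-block are deliberately not optimized: one fixes $\frac14$ by choosing Young's inequality constants so the cross terms $\langle\nabla^2(\theta^h,\eta^h),\nabla^2\rho^h\rangle$ leave room, and then the $u$-block constant is whatever falls out — so I would not fight for sharp constants here. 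Everything else is a routine repetition of the commutator and product estimates already deployed in the proofs of \eqref{G3.9} and \eqref{G3.28}.
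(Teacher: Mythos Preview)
Your approach is essentially the paper's: differentiate the momentum equation, pair against the high-frequency part $\nabla^{2}\rho^{h}$ (equivalently, expand $\frac{\mathrm{d}}{\mathrm{d}t}\langle\nabla\mathrm{div}u,\nabla\rho^{h}\rangle$ as in your second paragraph), extract $\langle\nabla^{2}\rho,\nabla^{2}\rho^{h}\rangle$ as the good term, split off $\rho^{L}$, transfer $\phi_{1}(D_{x})$ onto $\theta,\eta$ to get $\|\nabla^{2}(\theta^{h},\eta^{h})\|_{L^{2}}^{2}$ on the right, and close with the same product/commutator estimates already used for $\mathcal{N}_{1},\mathcal{N}_{2}$. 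One slip to clean up: in your first paragraph you say you pair with $\nabla\mathrm{div}u^{h}$ and write the ``principal term'' as $\langle\nabla(\nabla\rho+\nabla\theta+\nabla\eta),\nabla\mathrm{div}u^{h}\rangle$ --- that pairing is dimensionally off and would not produce a coercive term in $\rho$; the correct test object is $\nabla^{2}\rho^{h}$ (or equivalently $\nabla\rho^{h}$ after taking $\nabla\mathrm{div}$ of the momentum equation), exactly as your second paragraph already does.
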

\begin{proof}
From \eqref{I-3}$_2$, we have
\begin{align}\label{G3.34}
\nabla{\rho}=-\partial_{t}{u}+\Delta{u}+2\nabla{\rm div}{u}+\mathcal{N}_2-\nabla\theta-\nabla\eta.   
\end{align}
Applying $\nabla$ to \eqref{G3.34} and multiplying by
the result $\nabla^2\rho^h$, then using integration by parts, we obtain
\begin{align}\label{G3.35}
&-\frac{\mathrm{d}}{\mathrm{d}t}\langle\nabla{\rm div}{u},\nabla{\rho}^{h}\rangle+\langle\nabla^2{\rho}^{h},\nabla^2{\rho}\rangle\nonumber\\
&\,\quad =-\langle\nabla{\rm div}{u},\partial_{t}\nabla{\rho}^{h}\rangle+\langle\nabla^2{\rho}^{h},\nabla\Delta{u}\rangle+2\langle\nabla^2{\rho}^{h},\nabla^2{\rm div}{u}\rangle\nonumber\\
&\,\quad \ \ \ \, -\langle\nabla^2{\rho}^{h},\nabla^2\theta\rangle-\langle\nabla^2{\rho}^{h},\nabla^2\eta\rangle+\langle\nabla^2{\rho}^{h},\nabla\mathcal{N}_{2}\rangle.    
\end{align}
According to Young's inequality, it yields
\begin{align}\label{G3.36}
\langle\nabla^2{\rho}^{h},\nabla^2{\rho}\rangle=&\,\|\nabla^2{\rho}\|_{L^{2}}^{2}-\langle\nabla^2{\rho}^{L},\nabla^2{\rho}\rangle\nonumber\\
\geq&\,\frac{7}{8}\|\nabla^{2}{\rho}\|_{L^{2}}^{2}-2\|\nabla^{2}{\rho}^{L}\|_{L^{2}}^{2}.   
\end{align}
Applying the operator $\phi_1(D_x)$ to \eqref{I-3}$_1$, we derive that
\begin{align}\label{G3.37}
\partial_t{\rho}^h=-{\rm div}{u}^h+\mathcal{N}_1^h.   
\end{align}
Applying $\nabla$ to \eqref{G3.37} and multiplying 
the result 
by $\nabla{\rm div}u$ gives
\begin{align*}
-\langle\nabla{\rm div}{u},\partial_{t}\nabla{\rho}^{h}\rangle=\langle\nabla{\rm div}{u},\nabla{\rm div}{u}^{h}\rangle-\langle\nabla{\rm div}{u},\nabla\mathcal{N}_{1}^{h}\rangle. 
\end{align*}
Then, by making use of Young's inequality, we get
\begin{align}\label{G3.39}
-\langle\nabla{\rm div}{u},\partial_{t}\nabla{\rho}^{h}\rangle\leq&\,\frac{3}{2}\|\nabla{\rm div}{u}\|_{L^{2}}^{2}+\frac{1}{2}\|\nabla{\rm div}{u}^{h}\|_{L^{2}}^{2}+\frac{1}{4}\|\nabla\mathcal{N}_{1}^{h}\|_{L^{2}}^{2}\nonumber\\
\leq&\,2\|\nabla{\rm div}{u}\|_{L^{2}}^{2}+\frac{1}{4}\|\nabla\mathcal{N}_{1}\|_{L^{2}}^{2}\nonumber\\
\leq&\,2\|\nabla^2{u}\|_{L^{2}}^{2}+C\delta\|\nabla^{2}({\rho},{u})\|_{L^{2}}^{2},    
\end{align}
Here, we have used the fact that 
\begin{align}\label{G3.40}
\frac{1}{4}\|\nabla\mathcal{N}_{1}\|_{L^{2}}\leq& \,C\|{\rho}\|_{L^{\infty}}\|\nabla^{2}{u}\|_{L^{2}}+C\|\nabla^{2}{\rho}\|_{L^{2}}\|{u}\|_{L^{\infty}}+C\|\nabla\rho\|_{L^6}\|\nabla u\|_{L^3}\nonumber\\
\leq&\, C\delta\|\nabla^{2}({\rho},{u})\|_{L^{2}}.  
\end{align}
By an argument similar to \eqref{G3.39}, we infer that
\begin{align}\label{G3.41-1}
&\langle\nabla^2{\rho}^{h},\nabla\Delta{u}\rangle+2\langle\nabla^2{\rho}^{h},\nabla^2{\rm div}{u}\rangle\nonumber\\
\leq&\,\frac{3}{8}\|\nabla^2{\rho}^{h}\|_{L^{2}}^{2}+2\|\nabla\Delta{u}\|_{L^{2}}^{2}+4\|\nabla^2{\rm div}{u}\|_{L^{2}}^{2}\nonumber\\
\leq&\,\frac{3}{8}\|\nabla^2{\rho}\|_{L^{2}}^{2}
+2\|\nabla^{3}{u}\|_{L^{2}}^{2}+4\|\nabla^{2}\mathrm{div}u\|_{L^{2}}^{2}.   
\end{align}
Then, by using the definition of \eqref{G2.4} 
and Plancherel theorem, we arrive at
\begin{align*}
-\langle\nabla^2{\rho}^{h},\nabla^2\theta\rangle=&\,-\langle\phi_{1}(D_{x})\nabla^2{\rho},\nabla^2\theta\rangle\nonumber\\
=&\,-\langle\nabla^2{\rho},\phi_{1}(D_{x})\nabla^2\theta\rangle\nonumber\\
\leq&\,\frac{1}{16}\|\nabla^2{\rho}\|_{L^{2}}^{2}+4\|\nabla^2\theta^{h}\|_{L^{2}}^{2},   
\end{align*}
and
\begin{align*}
-\langle\nabla^2{\rho}^{h},\nabla^2\eta\rangle
\leq&\,\frac{1}{16}\|\nabla^2{\rho}\|_{L^{2}}^{2}+4\|\nabla^2\eta^{h}\|_{L^{2}}^{2}.   
\end{align*}
For the remaining terms on the right hand-side   of \eqref{G3.35}, it
follows from Young's inequality that
\begin{align}\label{G3.44}
\langle\nabla^2{\rho}^{h},\nabla\mathcal{N}_{2}\rangle\leq&\,\frac{1}{8}\|\nabla^2{\rho}^{h}\|_{L^{2}}^{2}+2\|\nabla\mathcal{N}_{2}\|_{L^{2}}^{2}\nonumber\\
\leq&\,\frac{1}{8}\|\nabla^2{\rho}\|_{L^{2}}^{2}+C\delta\Big(\|\nabla^{2}(\rho,u,\theta,\eta)\|_{L^{2}}^{2}+\|\nabla^{3}{u}\|_{L^{2}}^{2}\Big).  
\end{align}
By plugging \eqref{G3.36}, \eqref{G3.39}, \eqref{G3.41-1}--\eqref{G3.44} into \eqref{G3.35},
we consequently get \eqref{G3.33}.
\end{proof}
Adding \eqref{G3.32} and $\frac{1}{8}\lambda_2\times \eqref{G3.33}$, we get
\begin{align}\label{G3.45}
&\frac{\mathrm{d}}{\mathrm{d}t}\Big(\|\nabla^{2}(\rho,u,\theta,\eta)(t)\|_{L^{2}}^{2}-\frac{1}{8}\lambda_2\langle\nabla{\rm div}{u}(t),\nabla{\rho}^{h}(t)\rangle\Big)\nonumber\\
&\,+\lambda_3\Big(\|\nabla^{3}(u,\theta,\eta)(t)\|_{L^{2}}^{2}+\|\nabla^{2}(\theta,\eta,\rho)(t)\|_{L^{2}}^{2}+\|\nabla^{2}\mathrm{div}u(t)\|_{L^{2}}^{2}  \Big)\nonumber\\
&\,\quad \leq C_1\Big(\|\nabla^{2}(u,\theta)(t)\|_{L^{2}}^{2}+\|\nabla^2\rho^L(t)\|_{L^2}^2+\|\nabla^2\eta^h(t)\|_{L^2}^2\Big),
\end{align}  
for some  $\lambda_3>0$. Here, the positive constant $C_1$ is independent of 
$r_0$, $R_0$ and $T$.
Now, we define the temporal energy functional
\begin{align}\label{G3.46}
\mathcal{H}(t):=\|\nabla^{2}(\rho,u,\theta,\eta)(t)\|_{L^{2}}^{2}-\frac{1}{8}\lambda_2\langle\nabla{\rm div}{u}(t),\nabla{\rho}^{h}(t)\rangle.  
\end{align}
Utilizing Young's inequality and Lemma \ref{L2.5}, we achieve that
\begin{align*}
-\frac{1}{8}\lambda_2\langle\nabla{\rm div}{u}(t),\nabla{\rho}^{h}(t)\rangle
\leq&\, \frac{1}{16}\lambda_2\|\nabla^2 u\|_{L^2}^2+ \frac{1}{16}\lambda_2\|\nabla\rho^h\|_{L^2}^2\nonumber\\
\leq&\,\frac{1}{16}\|\nabla^2 u\|_{L^2}^2+\frac{1}{16R_0^2}\|\nabla^2 \rho\|_{L^2}^2\nonumber\\
\leq&\,\frac{1}{16}\|\nabla^2 u\|_{L^2}^2+\frac{1}{16}\|\nabla^2 \rho\|_{L^2}^2,
\end{align*}
where we have used the facts that $0<\lambda_2<1$ and $R_0>1$.
As a result,
there exists a positive constant $C_{2}$, which is 
independent of $\delta$ such that
\begin{align}\label{G3.48}
\frac{1}{C_{2}}\|\nabla^{2}({\rho},{u},\theta,\eta)(t)\|_{L^{2}}^{2}\leq \mathcal{H}(t)\leq C_{2}\|\nabla^{2}({\rho},{u},\theta,\eta)(t)\|_{L^{2}}^{2}.    
\end{align}

With the help of \eqref{G3.45}, \eqref{G3.46} and \eqref{G3.48},
we give the   following proposition.

\begin{prop}\label{P3.4}
For the strong solutions to the problem \eqref{I-2}--\eqref{I--2}, it holds that
\begin{align}\label{G3.49}
&\frac{\mathrm{d}}{\mathrm{d}t}\mathcal{H}(t)+\lambda_3\Big(\|\nabla^{3}(u,\theta,\eta)(t)\|_{L^{2}}^{2}+\|\nabla^{2}(\theta,\eta,\rho)(t)\|_{L^{2}}^{2}+\|\nabla^{2}\mathrm{div}u(t)\|_{L^{2}}^{2}  \Big)\nonumber\\
&\,\quad \leq C_1\Big(\|\nabla^{2}(u,\theta)(t)\|_{L^{2}}^{2}+\|\nabla^2\rho^L(t)\|_{L^2}^2+\|\nabla^2\eta^h(t)\|_{L^2}^2\Big),
\end{align}
with
\begin{align}\label{G3.50}
\mathcal{H}(t)\backsim    \|\nabla^{2}({\rho},{u},\theta,\eta)(t)\|_{L^{2}}^{2}. 
\end{align}    
\end{prop}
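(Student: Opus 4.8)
The plan is to read \eqref{G3.49} off directly from the combined differential inequality \eqref{G3.45} and then to verify the equivalence \eqref{G3.50} by a one–line Young estimate on the cross term appearing in the definition \eqref{G3.46} of $\mathcal{H}(t)$. So the proposition is really a bookkeeping step that packages the three preceding lemmas; no new analysis beyond them is needed.

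First I would recall that \eqref{G3.45} was obtained by adding \eqref{G3.32} --- itself the sum of \eqref{G3.9} and \eqref{G3.28} --- to $\tfrac18\lambda_2$ times \eqref{G3.33}, after choosing $\delta$ small enough that every $C\delta\|\nabla^{2}(\rho,u,\theta,\eta)\|_{L^{2}}^{2}$ and $C\delta\|\nabla^{3}u\|_{L^{2}}^{2}$ contribution is absorbed by the dissipation $\lambda_3\big(\|\nabla^{3}(u,\theta,\eta)\|_{L^{2}}^{2}+\|\nabla^{2}(\theta,\eta,\rho)\|_{L^{2}}^{2}+\|\nabla^{2}{\rm div}u\|_{L^{2}}^{2}\big)$ on the left, and after using Lemma \ref{L2.5} to bound the high–frequency pieces $\|\nabla^{2}\theta^h\|_{L^{2}},\|\nabla^{2}\eta^h\|_{L^{2}}$ by $R_0^{-1}\|\nabla^{3}(\theta,\eta)\|_{L^{2}}$ so that they too move to the left. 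Since the time–derivative term on the left of \eqref{G3.45} is exactly $\|\nabla^{2}(\rho,u,\theta,\eta)\|_{L^{2}}^{2}-\tfrac18\lambda_2\langle\nabla{\rm div}u,\nabla\rho^h\rangle=\mathcal{H}(t)$ by \eqref{G3.46}, inequality \eqref{G3.49} is just \eqref{G3.45} rewritten in terms of $\mathcal{H}(t)$.

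For \eqref{G3.50} I would estimate the only term by which $\mathcal{H}(t)$ differs from $\|\nabla^{2}(\rho,u,\theta,\eta)\|_{L^{2}}^{2}$. By Young's inequality $|\langle\nabla{\rm div}u,\nabla\rho^h\rangle|\le\tfrac12\|\nabla^{2}u\|_{L^{2}}^{2}+\tfrac12\|\nabla\rho^h\|_{L^{2}}^{2}$, while Lemma \ref{L2.5} (with $k=1$, $k_1=2$) together with $R_0>1$ gives $\|\nabla\rho^h\|_{L^{2}}\le R_0^{-1}\|\nabla^{2}\rho^h\|_{L^{2}}\le\|\nabla^{2}\rho\|_{L^{2}}$; using also $0<\lambda_2<1$ one gets $\big|\tfrac18\lambda_2\langle\nabla{\rm div}u,\nabla\rho^h\rangle\big|\le\tfrac1{16}\|\nabla^{2}u\|_{L^{2}}^{2}+\tfrac1{16}\|\nabla^{2}\rho\|_{L^{2}}^{2}\le\tfrac1{16}\|\nabla^{2}(\rho,u,\theta,\eta)\|_{L^{2}}^{2}$. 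Hence $\tfrac{15}{16}\|\nabla^{2}(\rho,u,\theta,\eta)\|_{L^{2}}^{2}\le\mathcal{H}(t)\le\tfrac{17}{16}\|\nabla^{2}(\rho,u,\theta,\eta)\|_{L^{2}}^{2}$, which is \eqref{G3.48} with a constant $C_2$ independent of $\delta$, $r_0$, $R_0$, $T$; this is exactly \eqref{G3.50}.

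If I had to single out the delicate point, it is organizational rather than analytic: the right-hand side of \eqref{G3.49} must retain only the \emph{low}-frequency part $\|\nabla^{2}\rho^L\|_{L^{2}}^{2}$ of $\rho$ (never the full $\|\nabla^{2}\rho\|_{L^{2}}^{2}$), which forces the $C\delta\|\nabla^{2}\rho\|_{L^{2}}^{2}$–type terms produced in \eqref{G3.16}, \eqref{G3.39}, \eqref{G3.44} to be genuinely absorbed into $\lambda_3\|\nabla^{2}\rho\|_{L^{2}}^{2}$ --- legitimate only once $\delta$ is fixed small --- and one must keep $C_1,C_2$ independent of the thresholds $r_0,R_0$ (to be chosen later), which holds here because $R_0$ entered only through the harmless bound $R_0>1$ and to make certain terms smaller.
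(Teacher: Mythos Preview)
Your proof is correct and follows the paper's approach exactly: \eqref{G3.49} is literally \eqref{G3.45} with the definition \eqref{G3.46} substituted, and \eqref{G3.50} follows from the Young/Lemma~\ref{L2.5} estimate on the cross term just as you wrote (this is precisely the computation leading to \eqref{G3.48} in the paper). One small correction to your recollection of how \eqref{G3.45} was assembled: the term $\|\nabla^{2}\eta^{h}\|_{L^{2}}^{2}$ is \emph{not} moved to the left via Lemma~\ref{L2.5} --- it remains on the right-hand side of \eqref{G3.45} and hence of \eqref{G3.49}, exactly as stated in the proposition (only $\|\nabla^{2}\theta^{h}\|_{L^{2}}^{2}$ gets absorbed, into the $\|\nabla^{2}\theta\|_{L^{2}}^{2}$ term already present on the right).
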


\medskip 
Next, we   devote ourselves to the estimates of strong solutions on
the zero derivative.

\subsection{Estimates of solutions on the zero-order derivative}
What we want to emphasize is that the classical energy method doesn't work here.
In fact, by utilizing integration by parts and Young's inequality, we get
\begin{align}\label{G3.51}
&\frac{1}{2}\frac{\mathrm{d}}{\mathrm{d}t}\int_{\mathbb{R}^{3}}\Big(|{2\rho}|^{2}+|2{u}|^{2}+|2\theta|^2+|\eta|^2\Big)\mathrm{d}x\nonumber\\
&\,+\int_{\mathbb{R}^{3}}\Big(4|\nabla{u}|^{2}+4|\nabla{\theta}|^{2}+|\nabla\eta|^2+8|{\rm div}{u}|^{2}+|4\theta-\eta|^2\Big)\mathrm{d}x\nonumber\\
=&\,-\int_{\mathbb{R}^{3}}{u}\cdot\nabla\eta\mathrm{d}x+\int_{\mathbb{R}^{3}}\Big({\rho}\mathcal{N}_{1}+{u}\cdot\mathcal{N}_{2}+\theta\mathcal{N}_{3}+\eta\mathcal{N}_{4}\Big)\mathrm{d}x\nonumber\\
\leq&\,\frac{1}{2}\int_{\mathbb{R}^{3}}|u|^{2}\mathrm{d}x+\frac{1}{2}\int_{\mathbb{R}^{3}}|\nabla\eta|^{2}\mathrm{d}x+\int_{\mathbb{R}^{3}}\Big({\rho}\mathcal{N}_{1}
+{u}\cdot\mathcal{N}_{2}+\theta\mathcal{N}_{3}+\eta\mathcal{N}_{4}\Big)\mathrm{d}x.
\end{align}
Because of the fact that the estimate of $\int_{\mathbb{R}^{3}}|u|^{2}\mathrm{d}x$ can't be controlled
directly, we shall develop a delicate  method in the same spirit of  \cite{Wwj-siam-2021} to deal with our system.
Let's consider a change of variable as follows:
\begin{align}\label{G3.52} \left(
\begin{matrix}
\rho  \\
u   \\
G  \\
F  \\
\end{matrix}\right):=\mathbb{T}\left(
\begin{matrix}
{\rho}  \\
{u}   \\
{\theta}  \\
\eta \\
\end{matrix}\right),
\end{align}
where
\begin{align}\label{G3.53}
\mathbb{T}=\left(
\begin{matrix}
1 & 0 & 0 & 0   \\
0 & 1 & 0 & 0    \\
0 & 0 & 4 & -1   \\
0 & 0 & 1 & 1  \\
\end{matrix}\right).
\end{align}
Therefore, the system \eqref{I-3} could be represented as
\begin{equation}\label{G3.54}
\left\{
\begin{aligned}
& \partial_t \rho+{\rm div}u=\mathcal{N}_1, \\
& \partial_t u+\nabla\rho-\Delta u-2\nabla {\rm div}u+\nabla F=\mathcal{N}_2, \\
& \partial_t G+4{\rm div}u-\Delta G+5G=4\mathcal{N}_3-\mathcal{N}_4,\\
& \partial_t F-\Delta F+{\rm div}u=\mathcal{N}_3+\mathcal{N}_4.
\end{aligned}\right.
\end{equation}

\begin{lem}
For the strong solutions to the problem \eqref{I-2}--\eqref{I--2}, it holds that
\begin{align}\label{G3.55}
&\frac{\mathrm{d}}{\mathrm{d}t}\Big(\|\rho(t)\|_{L^{2}}^{2}+\|{u}(t)\|_{L^{2}}^{2}+\|F(t)\|_{L^2}+\frac{1}{2}\|G(t)\|_{L^2}^2\Big)\nonumber\\
&+2\|\nabla u(t)\|_{L^2}^2+2\|{\rm div}u(t)\|_{L^2}^2+\|G(t)\|_{L^2}^2
+\|\nabla G(t)\|_{L^2}^2+2\|\nabla F(t)\|_{L^2}^2\nonumber\\
&\,\quad \leq C\delta\Big(\|\nabla^2(u,\theta)(t)\|_{L^2}^2+\|\nabla(\rho,\eta)(t)\|_{L^2}^2+\|\nabla^3\theta(t)\|_{L^2}^2        \Big).     
\end{align}
\end{lem}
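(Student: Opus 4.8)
The plan is to close \eqref{G3.55} by a direct $L^{2}$ energy estimate on the reformulated system \eqref{G3.54}, exploiting the fact that the change of variables $\mathbb{T}$ was designed precisely so that the non-dissipative couplings become skew-adjoint.

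First I would multiply \eqref{G3.54}$_{1}$ by $2\rho$, \eqref{G3.54}$_{2}$ by $2u$, \eqref{G3.54}$_{3}$ by $G$ and \eqref{G3.54}$_{4}$ by $2F$, integrate over $\mathbb{R}^{3}$, integrate by parts in the Laplacians, and add the four identities. On the left this produces
\begin{align*}
&\frac{\mathrm{d}}{\mathrm{d}t}\Big(\|\rho\|_{L^{2}}^{2}+\|u\|_{L^{2}}^{2}+\|F\|_{L^{2}}^{2}+\tfrac{1}{2}\|G\|_{L^{2}}^{2}\Big)\\
&\qquad +2\|\nabla u\|_{L^{2}}^{2}+4\|{\rm div}u\|_{L^{2}}^{2}+5\|G\|_{L^{2}}^{2}+\|\nabla G\|_{L^{2}}^{2}+2\|\nabla F\|_{L^{2}}^{2},
\end{align*}
together with the three coupling terms $2\langle{\rm div}u,\rho\rangle+2\langle\nabla\rho,u\rangle$, $2\langle\nabla F,u\rangle+2\langle{\rm div}u,F\rangle$ and $4\langle{\rm div}u,G\rangle$. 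After one integration by parts the first two pairs cancel identically --- this is exactly the purpose of $\mathbb{T}$, it decouples the pressure-type interaction of $(\rho,u,F)$ into skew-adjoint pairs and thereby circumvents the obstruction recorded in \eqref{G3.51} --- while the last term is controlled by Young's inequality, $4|\langle{\rm div}u,G\rangle|\leq 2\|{\rm div}u\|_{L^{2}}^{2}+2\|G\|_{L^{2}}^{2}$, leaving $2\|{\rm div}u\|_{L^{2}}^{2}+3\|G\|_{L^{2}}^{2}\geq 2\|{\rm div}u\|_{L^{2}}^{2}+\|G\|_{L^{2}}^{2}$ on the left. This already gives the left-hand side of \eqref{G3.55}, with the right-hand side equal to the nonlinear bracket
\begin{align*}
2\langle\rho,\mathcal{N}_{1}\rangle+2\langle u,\mathcal{N}_{2}\rangle+\langle G,4\mathcal{N}_{3}-\mathcal{N}_{4}\rangle+2\langle F,\mathcal{N}_{3}+\mathcal{N}_{4}\rangle.
\end{align*}

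Next I would estimate this bracket. I would expand each $\mathcal{N}_{i}$ into its monomials and treat them one at a time by a fixed recipe: integrate by parts so that every derivative lands on the most regular factor; distribute $L^{p}$-norms by H\"older's inequality and Lemma \ref{L2.1}; replace each undifferentiated factor by a gradient norm via $\|f\|_{L^{3}}\leq C\|f\|_{L^{2}}^{1/2}\|\nabla f\|_{L^{2}}^{1/2}$, $\|f\|_{L^{6}}\leq C\|\nabla f\|_{L^{2}}$ and $\|f\|_{L^{\infty}}\leq C\|\nabla f\|_{L^{2}}^{1/2}\|\nabla^{2}f\|_{L^{2}}^{1/2}$; and finally pull out at least one factor $\delta$ from each (at least quadratic) term using the a priori bound \eqref{G3.1} together with $|g(\rho)|\leq C\rho$, $|h(\rho)|\leq C$. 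Since $\theta=(F+G)/5$ and $\eta=(4F-G)/5$, the quantities $\|G\|_{L^{2}}$, $\|\nabla G\|_{L^{2}}$, $\|\nabla\theta\|_{L^{2}}$, $\|\nabla\eta\|_{L^{2}}$ are all bounded by the dissipation $\|G\|_{L^{2}}^{2}+\|\nabla G\|_{L^{2}}^{2}+\|\nabla F\|_{L^{2}}^{2}$ already on the left, so any contribution of the form $C\delta\big(\|\nabla u\|_{L^{2}}^{2}+\|{\rm div}u\|_{L^{2}}^{2}+\|G\|_{L^{2}}^{2}+\|\nabla G\|_{L^{2}}^{2}+\|\nabla F\|_{L^{2}}^{2}\big)$ can be moved left and absorbed once $\delta$ is small. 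What remains on the right is precisely what the zero-order dissipation does not see: $\|\nabla^{2}u\|_{L^{2}}^{2}$ from $\langle g(\rho)\Delta u,u\rangle$, $\langle g(\rho)\nabla{\rm div}u,u\rangle$ and the $D\!\cdot\!D$- and $({\rm div}u)^{2}$-type pieces of $\mathcal{N}_{3}$; $\|\nabla^{2}\theta\|_{L^{2}}^{2}$ and $\|\nabla^{3}\theta\|_{L^{2}}^{2}$ from $\langle g(\rho)\Delta\theta,\cdot\rangle$ (pairing $g(\rho)\Delta\theta$ against $G$ one spends $\|G\|_{L^{2}}$ and pays $\|\Delta\theta\|_{L^{6}}\leq C\|\nabla^{3}\theta\|_{L^{2}}$, while pairing against $F$ one spends $\|\nabla F\|_{L^{2}}$ and only needs $\|\Delta\theta\|_{L^{2}}$); $\|\nabla\rho\|_{L^{2}}^{2}$ from $\langle(g(\rho)+h(\rho)\theta)\nabla\rho,u\rangle$ and, after rewriting them as $\pm\langle\rho\nabla\rho,u\rangle$, from $\langle\rho\,{\rm div}u,\rho\rangle$ and $\langle\nabla\rho\cdot u,\rho\rangle$; and $\|\nabla\eta\|_{L^{2}}^{2}$ from $\langle g(\rho)\nabla\eta,u\rangle$. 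Collecting these and absorbing the small terms yields \eqref{G3.55}.

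The main obstacle here is entirely the bookkeeping of this last step: at the zero-order level none of $\rho,u,F,G$ carries a bare $L^{2}$ dissipation, so one must never leave an undifferentiated $\|\rho\|_{L^{2}},\|u\|_{L^{2}},\|F\|_{L^{2}}$ or $\|G\|_{L^{2}}$ multiplying a quantity that is not already an $O(\delta)$ gradient norm --- a naive bound for $\langle u\cdot\nabla u,u\rangle$ or for $\langle(g(\rho)+h(\rho)\theta)\nabla\rho,u\rangle$ would do exactly that. The fix, applied throughout, is to integrate by parts and/or choose the H\"older split so that every such factor is measured in $L^{3}$ or $L^{6}$ (and hence becomes a gradient) or is used only to extract a power of $\delta$; this is also the source of the otherwise wasteful $\|\nabla^{3}\theta\|_{L^{2}}^{2}$ on the right of \eqref{G3.55}, which is harmless because it is later dominated by the highest-order dissipation of Proposition \ref{P3.4}.
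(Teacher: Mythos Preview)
Your proposal is correct and follows essentially the same route as the paper: test \eqref{G3.54} with $(\rho,u,\tfrac{1}{2}G,F)$, exploit the skew-adjoint cancellations in the $(\rho,u)$ and $(u,F)$ couplings, split the remaining $\langle{\rm div}u,G\rangle$ term by Young, and bound each piece of the nonlinear bracket by H\"older/Sobolev with one factor estimated in $L^{3}$ or $L^{6}$ so that the a priori smallness \eqref{G3.1} always extracts a $\delta$. Your explanation of why the $\|\nabla^{3}\theta\|_{L^{2}}^{2}$ term is forced (from pairing $g(\rho)\Delta\theta$ against $G$ rather than $F$) is exactly the paper's computation, and your remark that the $C\delta\big(\|\nabla u\|_{L^{2}}^{2}+\|G\|_{L^{2}}^{2}+\dots\big)$ contributions absorb to the left is what the paper does implicitly when it passes from the individual estimates to \eqref{G3.55}.
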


\begin{proof}
It follows from \eqref{G3.54} that  
\begin{align}\label{G3.56}
&\frac{1}{2}\frac{\mathrm{d}}{\mathrm{d}t}\Big(\|\rho\|_{L^{2}}^{2}+\|{u}\|_{L^{2}}^{2}+\|F\|_{L^2}+\frac{1}{2}\|G\|_{L^2}^2\Big)\nonumber\\
&+\|\nabla u\|_{L^2}^2+2\|{\rm div}u\|_{L^2}^2+\frac{5}{2}\|G\|_{L^2}^2+\frac{1}{2}\|\nabla G\|_{L^2}^2+\|\nabla F\|_{L^2}^2\nonumber\\
&\,\quad =-2\langle{\rm div}u,G\rangle+\langle\rho,\mathcal{N}_1\rangle+\langle u,\mathcal{N}_2\rangle+\langle F,\mathcal{N}_3+\mathcal{N}_4\rangle+\frac{1}{2}\langle G,4\mathcal{N}_3-\mathcal{N}_4\rangle.
\end{align}
Using Lemma \ref{L2.1}, integration by parts, H\"{o}lder’s
and Young's inequalities, the first three terms on the right hand-side of \eqref{G3.56}
can be estimated as follows:
\begin{align}\nonumber
-2\langle {\rm div}u,G\rangle\leq&\, 2\|G\|_{L^2}^2+\frac{1}{2}\|{\rm div}u\|_{L^2}^2,\nonumber\\ 
\langle\rho,\mathcal{N}_1\rangle\leq&\, C\|\rho\|_{L^3}\|{\rm div}u\|_{L^2}\|\rho\|_{L^6}+C\|\rho\|_{L^3}\|u\|_{L^6}\|\nabla\rho\|_{L^2}\nonumber\\
\leq&\,C\delta\|\nabla(\rho,u)\|_{L^2}^2,\nonumber\\
\langle u,\mathcal{N}_2\rangle\leq&\, C|\langle u,u\cdot\nabla u\rangle|+C|\langle u,g(\rho)\nabla \rho\rangle |+C|\langle u,h(\rho)\theta\nabla \rho\rangle |\nonumber\\
&\,+C|\langle u,g(\rho)\Delta u\rangle |+C|\langle u,g(\rho)\nabla{\rm div} u\rangle |+C|\langle u,g(\rho)\nabla \eta\rangle |\nonumber\\
\leq&\, C\|u\|_{L^3}\|u\|_{L^6}\|\nabla u\|_{L^2}+C\|u\|_{L^6}\|g(\rho)\|_{L^3}\|\nabla \rho\|_{L^2}\nonumber\\
&+\,C\|u\|_{L^6}\|h(\rho)\|_{L^{\infty}}\|\theta\|_{L^3}\|\nabla \rho\|_{L^2}+C\|u\|_{L^6}\|g(\rho)\|_{L^3}\|\Delta u\|_{L^2}\nonumber\\
&+\,C\|u\|_{L^6}\|g(\rho)\|_{L^3}\|\nabla{\rm div} u\|_{L^2}+C\|u\|_{L^6}\|g(\rho)\|_{L^3}\|\nabla \eta\|_{L^2}\nonumber\\
\leq&\,C\delta\big(\|\nabla(\rho,u,\eta)\|_{L^2}^2+\|\nabla^2 u\|_{L^2}^2  \big).
\end{align}
Similarly,
\begin{align}\nonumber
\langle F,\mathcal{N}_3+\mathcal{N}_4\rangle\leq&\, C\|F\|_{L^6}\|g(\rho)\|_{L^3}\|\Delta \theta\|_{L^2}+C\|F\|_{L^6}\|\theta\|_{L^3}\|{\rm div}u\|_{L^2}\nonumber\\
&+\,C\|F\|_{L^6}\|u\|_{L^3}\|\nabla\theta\|_{L^2}+C\|F\|_{L^6}\|g(\rho)\|_{L^3}\|G\|_{L^2}\nonumber\\
&+\,C\|h(\rho)_{L^{\infty}}\|F\|_{L^6}\|\Big(\|{\rm div}u\|_{L^2}\|{\rm div}u\|_{L^3}+\|\nabla u\|_{L^2}\|\nabla u\|_{L^3}\Big)\nonumber\\
&+\,C\|\rho\|_{L^6}\|\theta\|_{L^6}\|F\|_{L^6}\|\theta\|_{L^2}\Big(1+\|\theta\|_{L^{\infty}}+\|\theta\|_{L^{\infty}}^2     \Big)\nonumber\\
\leq&\,C\delta\big(\|\nabla(\rho,u)\|_{L^2}^2+\|G\|_{L^2}^2+\|\nabla^2\theta\|_{L^2}^2                     \big),\nonumber\\
\langle G,4\mathcal{N}_3-\mathcal{N}_4\rangle\leq&\, C\|G\|_{L^2}\|g(\rho)\|_{L^3}\|\Delta \theta\|_{L^6}+C\|G\|_{L^2}\|\theta\|_{L^3}\|{\rm div}u\|_{L^6}\nonumber\\
&+\,C\|G\|_{L^2}\|u\|_{L^3}\|\nabla\theta\|_{L^6}+C\|G\|_{L^2}\|\eta\|_{L^3}\|G\|_{L^6}\nonumber\\
&+\,C\|h(\rho)_{L^{\infty}}\|G\|_{L^2}\|\Big(\|{\rm div}u\|_{L^6}\|{\rm div}u\|_{L^3}+\|\nabla u\|_{L^6}\|\nabla u\|_{L^3}\Big)\nonumber\\
&+\,C\big(\|h(\rho)\|_{L^{\infty}}+1\big)\|\theta\|_{L^6}\|\theta\|_{L^3}\|G\|_{L^2}\Big(1+\|\theta\|_{L^{\infty}}+\|\theta\|_{L^{\infty}}^2     \Big)\nonumber\\
\leq&\,C\delta\big(\|\nabla G\|_{L^2}^2+\|G\|_{L^2}^2+\|\nabla^2(u,\theta)\|_{L^2}^2+\|\nabla^3\theta\|_{L^2}^2                     \big),
\end{align}
substituting all the above estimates 
into \eqref{G3.56}, we obtain the
desired inequality \eqref{G3.55}.
\end{proof}

Then, we further give the estimate of $\nabla \rho$.
\begin{lem}
For the strong solutions to the problem \eqref{I-2}--\eqref{I--2}, it holds that
\begin{align}\label{G3.59}
&\frac{\mathrm{d}}{\mathrm{d}t}\langle u(t),\nabla\rho^l(t)\rangle+\frac{1}{2}\|\nabla\rho(t)\|_{L^2}^2\nonumber\\
&\,\quad \leq 2\|\nabla\rho^H(t)\|_{L^2}^2+\frac{1}{2}\|\nabla u(t)\|_{L^2}^2+4\|{\rm div}u(t)\|_{L^2}^2+2\|\nabla F(t)\|_{L^2}^2\nonumber\\
&\,\quad \ \ \ +C\delta\big(\|\nabla^2(\rho,u)(t)\|_{L^2}^2+\|\nabla u(t)\|_{L^2}^2    \big).
\end{align}
\end{lem}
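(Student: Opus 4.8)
The plan is to derive an evolution equation for the scalar quantity $\langle u, \nabla\rho^l\rangle$ directly from the system \eqref{G3.54}, extract the good term $\|\nabla\rho\|_{L^2}^2$ from the pressure coupling, and control all remaining terms by the dissipative quantities already appearing on the right-hand side (velocity gradients, $\nabla F$, and $\delta$-small nonlinear contributions). First I would compute $\frac{\mathrm{d}}{\mathrm{d}t}\langle u,\nabla\rho^l\rangle = \langle \partial_t u,\nabla\rho^l\rangle + \langle u,\partial_t\nabla\rho^l\rangle$. For the first piece I substitute $\partial_t u$ from \eqref{G3.54}$_2$, namely $\partial_t u = -\nabla\rho + \Delta u + 2\nabla{\rm div}u - \nabla F + \mathcal{N}_2$; pairing $-\nabla\rho$ with $\nabla\rho^l$ and using $\langle\nabla\rho,\nabla\rho^l\rangle = \|\nabla\rho^l\|_{L^2}^2 \geq 0$ after adding back $\|\nabla\rho^H\|_{L^2}^2$ is not quite the right sign bookkeeping — instead I pair $\nabla\rho^l$ against the full $\nabla\rho$ to get $\|\nabla\rho\|_{L^2}^2 - \|\nabla\rho^H\|_{L^2}^2$ up to the cut-off, which is the source of the $-\|\nabla\rho\|_{L^2}^2$ term and the harmless $+\|\nabla\rho^H\|_{L^2}^2$ on the right. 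For the second piece I apply $\phi_0(D_x)$ to \eqref{G3.54}$_1$ to get $\partial_t\rho^l = -{\rm div}u^l + \mathcal{N}_1^l$, so $\langle u,\partial_t\nabla\rho^l\rangle = -\langle u,\nabla{\rm div}u^l\rangle + \langle u,\nabla\mathcal{N}_1^l\rangle = \langle {\rm div}u,{\rm div}u^l\rangle - \langle {\rm div}u,\mathcal{N}_1^l\rangle$ after integrating by parts, which yields the $\|{\rm div}u\|_{L^2}^2$ contribution.

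Next I collect the linear error terms: $\langle\Delta u + 2\nabla{\rm div}u,\nabla\rho^l\rangle$ is handled by integration by parts to move a derivative onto $\nabla\rho^l$, then using Lemma \ref{L2.5} to bound $\|\nabla^2\rho^l\|_{L^2} \leq \|\nabla^2\rho\|_{L^2}$ and Young's inequality to absorb into $\delta\|\nabla^2(\rho,u)\|_{L^2}^2 + \|\nabla u\|_{L^2}^2$-type quantities (in fact these should be grouped with the $C\delta$ term or the $\|\nabla u\|_{L^2}^2$ term depending on whether one wants a clean constant). The term $-\langle\nabla F,\nabla\rho^l\rangle$ is estimated by $\frac12\|\nabla\rho^l\|_{L^2}^2 + \frac12\|\nabla F\|_{L^2}^2$-type splitting, but since we want the $\|\nabla\rho\|_{L^2}^2$ coefficient to survive as $\frac12$, I would instead use a small parameter: bound $|\langle\nabla F,\nabla\rho^l\rangle| \leq \epsilon\|\nabla\rho\|_{L^2}^2 + C_\epsilon\|\nabla F\|_{L^2}^2$ and likewise for $\langle\nabla u,\nabla\rho^l\rangle$-type terms, choosing $\epsilon$ so the total coefficient of $\|\nabla\rho\|_{L^2}^2$ on the left is at least $\frac12$; the stated constants $2\|\nabla F\|_{L^2}^2$ and $4\|{\rm div}u\|_{L^2}^2$ then come out of this bookkeeping. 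The nonlinear terms $\langle\mathcal{N}_2,\nabla\rho^l\rangle$ and $\langle{\rm div}u,\mathcal{N}_1^l\rangle$ are estimated exactly as in \eqref{G3.40} and the $\mathcal{N}_2$ estimates already established: $\|\mathcal{N}_1\|_{L^2}$ and $\|\mathcal{N}_2\|_{L^2}$ are each $\lesssim \delta(\|\nabla(\rho,u,\eta)\|_{L^2} + \|\nabla^2 u\|_{L^2})$ under the a priori assumption \eqref{G3.1}, using Lemma \ref{L2.1}, so these contribute to the $C\delta(\|\nabla^2(\rho,u)\|_{L^2}^2 + \|\nabla u\|_{L^2}^2)$ term.

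The main obstacle, as usual for these cross-term estimates, is sign/constant bookkeeping rather than any genuine analytic difficulty: one must be careful that the coefficient of $\|\nabla\rho\|_{L^2}^2$ extracted from $-\langle\nabla\rho,\nabla\rho^l\rangle$ really is positive and bounded below — here the point is that $\langle\nabla\rho,\nabla\rho^l\rangle = \langle\widehat{\nabla\rho},\phi_0\widehat{\nabla\rho}\rangle \geq 0$ by Plancherel and $0\le\phi_0\le 1$, and $\|\nabla\rho\|_{L^2}^2 - \langle\nabla\rho,\nabla\rho^l\rangle = \langle\widehat{\nabla\rho},(1-\phi_0)\widehat{\nabla\rho}\rangle \leq \|\nabla\rho^H\|_{L^2}^2$ since $1-\phi_0$ is supported where $|\xi| > r_0/2$, i.e. on the support of the $H$-part multiplier. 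Hence $-\langle\nabla\rho,\nabla\rho^l\rangle \leq -\|\nabla\rho\|_{L^2}^2 + \|\nabla\rho^H\|_{L^2}^2$, which after subtracting $\frac12\|\nabla\rho\|_{L^2}^2$ to the left and allotting the rest to absorb the $\epsilon\|\nabla\rho\|_{L^2}^2$ pieces from $\nabla F$ and $\nabla u$, gives precisely \eqref{G3.59}. Everything else is a routine application of Lemmas \ref{L2.1} and \ref{L2.5} together with Young's inequality.
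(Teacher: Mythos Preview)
Your overall strategy matches the paper's: differentiate $\langle u,\nabla\rho^l\rangle$ in time, substitute \eqref{G3.54}$_1$ and \eqref{G3.54}$_2$, and extract $\|\nabla\rho\|_{L^2}^2$ from the pressure coupling. However, there is one genuine gap and one smaller error in the details.

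The genuine gap is in your treatment of the viscous cross terms $\langle\Delta u + 2\nabla{\rm div}u,\nabla\rho^l\rangle$. After integrating by parts these become $-\langle\nabla u,\nabla^2\rho^l\rangle - 2\langle{\rm div}u,\Delta\rho^l\rangle$, and you propose to use $\|\nabla^2\rho^l\|_{L^2} \leq \|\nabla^2\rho\|_{L^2}$ and then ``absorb into $\delta\|\nabla^2(\rho,u)\|_{L^2}^2$''. But these are \emph{linear} terms carrying no $\delta$-smallness, so this route produces $C\|\nabla^2\rho\|_{L^2}^2$ on the right with an $O(1)$ coefficient, which is not present in \eqref{G3.59} and cannot be absorbed anywhere in the subsequent zero-order energy estimate. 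The paper instead uses the \emph{low-frequency gain} in Lemma~\ref{L2.5}, namely $\|\nabla^2\rho^l\|_{L^2}\le r_0\|\nabla\rho^l\|_{L^2}\le r_0\|\nabla\rho\|_{L^2}$; since $r_0<\tfrac14$ (see \eqref{G2.2}) this gives a contribution $\le 2r_0^2\|\nabla\rho\|_{L^2}^2\le\tfrac18\|\nabla\rho\|_{L^2}^2$ as in \eqref{G3.64}, which is absorbed into the left-hand side. This smallness of $r_0$ is precisely why one works with $\nabla\rho^l$ rather than $\nabla\rho$ here.

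The smaller error is in your Plancherel argument for the pressure term. You claim
\[
\|\nabla\rho\|_{L^2}^2 - \langle\nabla\rho,\nabla\rho^l\rangle=\int(1-\phi_0)|\widehat{\nabla\rho}|^2\,d\xi \le \|\nabla\rho^H\|_{L^2}^2,
\]
but $\|\nabla\rho^H\|_{L^2}^2=\int(1-\phi_0)^2|\widehat{\nabla\rho}|^2\,d\xi$, and since $(1-\phi_0)^2\le(1-\phi_0)$ the inequality goes the \emph{other} way. The paper sidesteps this by applying Young's inequality directly to $\langle\nabla\rho^H,\nabla\rho\rangle$, obtaining $\langle\nabla\rho^l,\nabla\rho\rangle\ge\tfrac78\|\nabla\rho\|_{L^2}^2-2\|\nabla\rho^H\|_{L^2}^2$ as in \eqref{G3.61}; the extra factor of $2$ is harmless and accounts for the coefficient $2\|\nabla\rho^H\|_{L^2}^2$ in \eqref{G3.59}.
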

\begin{proof}
Multiplying \eqref{G3.54}$_2$ by $\nabla \rho^l$ and then integrating the result  over $\mathbb{R}^3$, we have  
\begin{align}\label{G3.60}
&\frac{\mathrm{d}}{\mathrm{d}t}\langle {u},\nabla\rho^{l}\rangle+\langle\nabla\rho^{l},\nabla\rho\rangle\nonumber\\
&\,\quad =\langle {u},\nabla\partial_{t}\rho^{l}\rangle+\langle\nabla\rho^{l},\Delta {u}\rangle+2\langle\nabla\rho^{l},\nabla\mathrm{divu}\rangle-\langle\nabla F,\nabla\rho^l\rangle+\langle\nabla\rho^{l},\mathcal{N}_{2}\rangle.    
\end{align}
Thanks to the definition of \eqref{G2.4} and Young's inequality, we obtain
\begin{align}\label{G3.61}
\langle\nabla\rho^{l},\nabla\rho\rangle=&\|\nabla\rho\|_{L^{2}}^{2}-\langle\nabla\rho^{H},\nabla\rho\rangle 
\geq \frac{7}{8}\|\nabla\rho\|_{L^{2}}^{2}-2\|\nabla\rho^H\|_{L^2}^2.   
\end{align}
Applying the operator $\phi_0(D_x)$ to the \eqref{G3.54}$_1$ yields
\begin{align}\label{G3.62}
\partial_t\rho^l=-{\rm div}{u}^l+\mathcal{N}_1^l.   
\end{align}
By utilizing \eqref{G3.62}, Lemma \ref{L2.5}, integration by parts and Young's inequality, we derive that 
\begin{align}\label{G3.63}
\langle{u},\nabla\partial_{t}\rho^{l}\rangle=&-\langle{\rm div}u,\partial_{t}\rho^{l}\rangle\nonumber\\
=&\,\langle{\rm div}u,\mathrm{divu}^{l}\rangle-\langle\mathrm{divu},\mathcal{N}_{1}^{l}\rangle\nonumber\\
\leq&\,\frac{1}{2}\|{\rm div}u^l\|_{L^{2}}^{2}+\|{\rm div}u\|_{L^{2}}^{2}+\frac{1}{2}\|\mathcal{N}_1^l\|_{L^2}^2\nonumber\\  
\leq&\,\frac{3}{2}\|{\rm div}u\|_{L^{2}}^{2}+C\delta\big(\|\nabla^2(\rho,u)\|_{L^2}^2+ \|\nabla(\rho,u)\|_{L^2}^2   \big).
\end{align}
Similarly, we have
\begin{align}\label{G3.64}
&\langle\Delta {u},\nabla\rho^{l}\rangle+2\langle\nabla{\rm div}u,\nabla\rho^{l}\rangle\nonumber\\
\leq&\,|\langle\nabla{u},\nabla^2\rho^{l}\rangle|+2|\langle\mathrm{divu},\Delta\rho^{l}\rangle|\nonumber\\
\leq&\,\frac{1}{2}\|\nabla u\|_{L^2}^2+2r_0^2\|\nabla\rho\|_{L^2}^2+2\|{\rm div}u\|_{L^2}^2\nonumber\\
\leq&\,\frac{1}{2}\|\nabla u\|_{L^2}^2+2\|{\rm div}u\|_{L^2}^2+\frac{1}{8}\|\nabla\rho\|_{L^2}^2,
\end{align}
where we have used the fact $r_0<\frac{1}{4}$,
It follows from Young's inequality that
\begin{align}
-\langle\nabla F,\nabla\rho^l\rangle\leq &\,\frac{1}{8}\|\nabla\rho^l\|_{L^2}^2+2\|\nabla F\|_{L^2}^2,\label{G3.65}\\
\langle\nabla\rho^{l},\mathcal{N}_{2}\rangle 
\leq&\, \frac{1}{8}\|\nabla\rho^l\|_{L^2}^2+2\|\mathcal{N}_2\|_{L^2}^2\nonumber\\
\leq&\,\frac{1}{8}\|\nabla\rho^l\|_{L^2}^2+C\delta\Big(\|\nabla(\rho,u)\|_{L^2}^2+\|\nabla^2 u\|_{L^2}^2         \Big).\label{G3.66}
\end{align}
Putting \eqref{G3.61}--\eqref{G3.66} into \eqref{G3.60}, we consequently
get the inequality \eqref{G3.59}.
\end{proof}

Then, adding  \eqref{G3.55} and $\frac{1}{4}\times$\eqref{G3.59} up provides
\begin{align}\label{G3.67}
&\frac{\mathrm{d}}{\mathrm{d}t}\Big(\|\rho(t)\|_{L^{2}}^{2}+\|{u}(t)\|_{L^{2}}^{2}+\|F(t)\|_{L^2}+\frac{1}{2}\|G(t)\|_{L^2}^2+\frac{1}{4}\langle u(t),\nabla\rho^l(t)\rangle\Big)\nonumber\\
&+\frac{1}{8}\|\nabla\rho(t)\|_{L^2}^2+\|\nabla u(t)\|_{L^2}^2+\|{\rm div}u(t)\|_{L^2}^2+\|G(t)\|_{L^2}^2
+\|\nabla G(t)\|_{L^2}^2+\|\nabla F(t)\|_{L^2}^2\nonumber\\
&\,\quad \leq C\delta\Big(\|\nabla^2(\rho,u,\theta)(t)\|_{L^2}^2+\|\nabla(\rho,\eta)(t)\|_{L^2}^2+\|\nabla^3\theta(t)\|_{L^2}^2        \Big)+\frac{1}{2}\|\nabla\rho^H(t)\|_{L^2}^2.     
\end{align}
Now, we continue to define the temporal energy functional $\mathcal{L}(t)$ as
\begin{align}\label{G3.68}
\mathcal{L}(t):=\|(\rho,u,F)(t)\|_{L^{2}}^{2}+\frac{1}{2}\|G(t)\|_{L^{2}}^{2}+\frac{1}{4}\langle u(t),\nabla\rho^l(t)\rangle\rangle.  
\end{align}
According to Young's inequality and Lemma \ref{L2.5}, we infer that
\begin{align*}
\frac{1}{4}\langle u(t),\nabla\rho^l(t)\rangle\rangle
\leq&\, \frac{1}{8}\| u(t)\|_{L^2}^2+ \frac{1}{8}\|\nabla\rho^l(t)\|_{L^2}^2\nonumber\\
\leq&\,\frac{1}{8}\| u(t)\|_{L^2}^2+\frac{r_0^2}{8}\| \rho(t)\|_{L^2}^2\nonumber\\
\leq&\,\frac{1}{8}\| u(t)\|_{L^2}^2+\frac{1}{8}\|\rho(t)\|_{L^2}^2,
\end{align*}
where we have used the fact $r_0<\frac{1}{4}$.
Thus,
there exists a positive constant $C_{3}$,  
independent of $\delta$, such that
\begin{align}\label{G3.70}
\frac{1}{C_{3}}\|({\rho},{u},G,F)(t)\|_{L^{2}}^{2}\leq \mathcal{H}(t)\leq C_{3}\|({\rho},{u},G,F)(t)\|_{L^{2}}^{2}.
\end{align}
It is clear that
\begin{align*}
\theta=\frac{1}{5}(G+F),\quad\eta=\frac{1}{5}(4F-G),    
\end{align*}
which means there exists a positive constant $C_4$ such that
\begin{align}\label{G3.72}
\frac{1}{C_4}\Big(\|\theta\|_{L^2}^2+\|\eta\|_{L^2}^2  \Big) \leq \|G\|_{L^2}^2+\|F\|_{L^2}^2\leq {C_4}\Big(\|\theta\|_{L^2}^2+\|\eta\|_{L^2}^2  \Big).   
\end{align}
With the aid of \eqref{G3.70} and \eqref{G3.72}, we conclude that
\begin{align*}
\mathcal{L}(t)\backsim   \|({\rho},{u},\theta,\eta)(t)\|_{L^{2}}^{2} . 
\end{align*}
This together with \eqref{G3.67} yields
the following proposition:
\begin{prop}\label{P3.7}
For the strong solutions to the problem \eqref{I-2}--\eqref{I--2}, it holds that
\begin{align}\label{G3.74}
&\frac{\mathrm{d}}{\mathrm{d}t}\mathcal{L}(t)+\lambda_4\Big(\|\nabla(\rho,u,\theta,\eta)(t)\|_{L^2}^2+\|{\rm div}u(t)\|_{L^2}^2+\|(4\theta-\eta)(t)\|_{L^2}^2
\Big)\nonumber\\
&\,\quad \leq C\delta\Big(\|\nabla^2(\rho,u,\theta)(t)\|_{L^2}^2+\|\nabla(\rho,\eta)(t)\|_{L^2}^2+\|\nabla^3\theta(t)\|_{L^2}^2        \Big)+\frac{1}{2}\|\nabla\rho^H(t)\|_{L^2}^2,     
\end{align}
for some $\lambda_4>0$.
\end{prop}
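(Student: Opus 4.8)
\emph{Proof proposal.} The estimate \eqref{G3.74} is essentially an assembling step: all the nontrivial analysis is already contained in \eqref{G3.67}, and what remains is to re-express its dissipative terms in the original unknowns $(\rho,u,\theta,\eta)$ and to verify that the functional $\mathcal{L}(t)$ defined in \eqref{G3.68} is genuinely equivalent to $\|(\rho,u,\theta,\eta)(t)\|_{L^2}^2$, so that it truly controls the zero-order energy. The plan is to do these two things in turn and then substitute back.

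First I would record that the matrix $\mathbb{T}$ in \eqref{G3.53} is invertible, with $\theta=\frac15(G+F)$, $\eta=\frac15(4F-G)$ and $G=4\theta-\eta$, $F=\theta+\eta$. Hence $\|(4\theta-\eta)(t)\|_{L^2}^2=\|G(t)\|_{L^2}^2$ already appears among the dissipative terms on the left of \eqref{G3.67}, while $\nabla\theta$ and $\nabla\eta$ are fixed linear combinations of $\nabla G$ and $\nabla F$, so that $\|\nabla(\theta,\eta)(t)\|_{L^2}^2\le C\big(\|\nabla G(t)\|_{L^2}^2+\|\nabla F(t)\|_{L^2}^2\big)$. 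Together with the terms $\|\nabla\rho\|_{L^2}^2$, $\|\nabla u\|_{L^2}^2$, $\|{\rm div}u\|_{L^2}^2$ already present in \eqref{G3.67}, this lets me choose $\lambda_4>0$ small enough that the left-hand dissipation of \eqref{G3.67} dominates
\begin{align*}
\lambda_4\Big(\|\nabla(\rho,u,\theta,\eta)(t)\|_{L^2}^2+\|{\rm div}u(t)\|_{L^2}^2+\|(4\theta-\eta)(t)\|_{L^2}^2\Big).
\end{align*}

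Next I would establish $\mathcal{L}(t)\backsim\|(\rho,u,\theta,\eta)(t)\|_{L^2}^2$. By Young's inequality, Lemma \ref{L2.5} and the smallness $r_0<\frac14$, the cross term $\frac14\langle u,\nabla\rho^l\rangle$ in \eqref{G3.68} is bounded by $\frac18\big(\|\rho(t)\|_{L^2}^2+\|u(t)\|_{L^2}^2\big)$, which gives $\mathcal{L}(t)\backsim\|(\rho,u,G,F)(t)\|_{L^2}^2$ (this is \eqref{G3.70}); invoking invertibility of $\mathbb{T}$ once more on the $(\theta,\eta)$ block — equivalently the two-sided bound \eqref{G3.72} — I would replace $\|(G,F)\|_{L^2}^2$ by $\|(\theta,\eta)\|_{L^2}^2$ to reach the claimed equivalence. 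Substituting this into \eqref{G3.67}, and leaving the $C\delta(\cdots)$ terms and the high-frequency remainder $\frac12\|\nabla\rho^H(t)\|_{L^2}^2$ on the right-hand side untouched, produces \eqref{G3.74}.

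I do not expect a genuine obstacle at this stage; the delicate point — and the reason for introducing the change of variables \eqref{G3.52}--\eqref{G3.53} — is that \eqref{G3.67} offers no zero-order dissipation in $\rho$ or $u$, only $\|\nabla\rho\|_{L^2}^2$, $\|\nabla u\|_{L^2}^2$, $\|{\rm div}u\|_{L^2}^2$, the would-be uncontrollable term $\int_{\mathbb{R}^3}|u|^2\,\mathrm{d}x$ of the naive estimate \eqref{G3.51} having been traded for the benign $\langle u,\nabla\rho^l\rangle$. That difficulty was therefore already absorbed in deriving \eqref{G3.55} and \eqref{G3.59}; Proposition \ref{P3.7} is the clean-up. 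The leftover $\|\nabla\rho^H(t)\|_{L^2}^2$ is purely high-frequency and is designed to be cancelled later, when \eqref{G3.74} is combined with Proposition \ref{P3.4} through the frequency-localized bounds of Lemma \ref{L2.5}.
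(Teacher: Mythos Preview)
Your proposal is correct and follows essentially the same route as the paper: the paper also derives \eqref{G3.74} directly from \eqref{G3.67} by recognizing that the functional on its left is exactly $\mathcal{L}(t)$, and then uses the invertibility of $\mathbb{T}$ (equivalently \eqref{G3.72}) to rewrite the $(G,F)$-dissipation as $(\theta,\eta)$-dissipation with $\|G\|_{L^2}^2=\|(4\theta-\eta)\|_{L^2}^2$, while the equivalence $\mathcal{L}(t)\backsim\|(\rho,u,\theta,\eta)\|_{L^2}^2$ is obtained via Young's inequality and Lemma~\ref{L2.5} exactly as you describe.
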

With Proposition \ref{P3.4} and Proposition \ref{P3.7} in hand,
we obtain the following lemma.
\begin{lem}\label{L3.8}
For the strong solutions to the problem \eqref{I-2}--\eqref{I--2}, it holds that
\begin{align}\label{G3.75}
&\|(\rho,{u},\theta,\eta)(t)\|_{H^{2}}^{2}+\int_{0}^{t}\Big(\|\nabla(\rho,u,\theta)(\tau)\|_{H^{2}}^{2}+\|\nabla \rho(\tau)\|_{H^{1}}^{2}+\|(4\theta-\eta)(\tau)\|_{H^2}^2\Big)\mathrm{d}\tau\nonumber\\
&\,\quad \leq C\|(\rho,{u},\theta,\eta)(0)\|_{H^{2}}^{2}+C_{5}R_0^4\int_{0}^{t}\|(\rho^m,u^m,\theta^m,\eta^m)(\tau)\|_{L^2}^2\mathrm{d}\tau, 
\end{align}
where $C_5$ is independent of
$r_0$, $R_0$ and $T$.
\end{lem}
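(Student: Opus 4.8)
The plan is to combine the two differential inequalities from Propositions~\ref{P3.4} and \ref{P3.7} into a single Lyapunov-type functional, absorb the lower-order terms on the right-hand side into the dissipation using the smallness parameter $\delta$ and the frequency-localization estimates of Lemma~\ref{L2.5}, and then integrate in time. First I would form the combined functional $\mathcal{E}(t):=\mathcal{H}(t)+M\mathcal{L}(t)$ for a large constant $M>0$ to be chosen. By \eqref{G3.50} and the equivalence $\mathcal{L}(t)\backsim\|(\rho,u,\theta,\eta)(t)\|_{L^2}^2$ established just before Proposition~\ref{P3.7}, we have $\mathcal{E}(t)\backsim\|(\rho,u,\theta,\eta)(t)\|_{H^2}^2$. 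Adding \eqref{G3.49} and $M\times$\eqref{G3.74} gives
\begin{align*}
&\frac{\mathrm{d}}{\mathrm{d}t}\mathcal{E}(t)+\lambda_3\Big(\|\nabla^{3}(u,\theta,\eta)\|_{L^{2}}^{2}+\|\nabla^{2}(\theta,\eta,\rho)\|_{L^{2}}^{2}+\|\nabla^{2}\mathrm{div}u\|_{L^{2}}^{2}\Big)\\
&\quad+M\lambda_4\Big(\|\nabla(\rho,u,\theta,\eta)\|_{L^2}^2+\|{\rm div}u\|_{L^2}^2+\|(4\theta-\eta)\|_{L^2}^2\Big)\\
&\quad\leq C_1\Big(\|\nabla^{2}(u,\theta)\|_{L^{2}}^{2}+\|\nabla^2\rho^L\|_{L^2}^2+\|\nabla^2\eta^h\|_{L^2}^2\Big)+CM\delta\Big(\|\nabla^2(\rho,u,\theta)\|_{L^2}^2+\|\nabla(\rho,\eta)\|_{L^2}^2+\|\nabla^3\theta\|_{L^2}^2\Big)+\frac{M}{2}\|\nabla\rho^H\|_{L^2}^2.
\end{align*}

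Next I would eliminate the troublesome terms on the right. The factor $CM\delta(\cdots)$ is absorbed into the dissipation on the left once $\delta$ is small relative to $1/M$, since each term there already appears (with a fixed positive coefficient) among the dissipative quantities. The term $C_1\|\nabla^2(u,\theta)\|_{L^2}^2$ is dominated by $M\lambda_4\|\nabla(u,\theta)\|_{L^2}^2$: split $\nabla(u,\theta)=\nabla(u,\theta)^L+\nabla(u,\theta)^h$, bound the high part by $\|\nabla^2(u,\theta)\|_{L^2}^2$ using Lemma~\ref{L2.5} (which contributes back to the $\nabla^2$ dissipation and is absorbed for $M$ large), and keep the remaining low–medium part — but more efficiently one simply chooses $M$ large enough that $M\lambda_4\gg C_1$ after using Lemma~\ref{L2.5} to write $\|\nabla^2(u,\theta)\|_{L^2}^2\le\|\nabla^3(u,\theta)\|_{L^2}^2$ on the high frequencies and $\|\nabla^2(u,\theta)\|_{L^2}^2\le R_0^2\|\nabla(u,\theta)^m\|_{L^2}^2+\|\nabla(u,\theta)^l\|_{L^2}^2$ otherwise; the low frequencies are controlled by $\|\nabla(u,\theta)\|_{L^2}^2$ from the $\mathcal{L}$-dissipation and the medium frequencies go into the remainder term. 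Similarly $\|\nabla^2\rho^L\|_{L^2}^2\le R_0^2\|\nabla^2\rho^m\|_{L^2}^2+r_0^2\|\nabla\rho^l\|_{L^2}^2$ and $\|\nabla\rho^H\|_{L^2}^2\le R_0^2\|\rho^m\|_{L^2}^2+\|\nabla^2\rho\|_{L^2}^2$ (again by Lemma~\ref{L2.5}, using $R_0>1$, $r_0<1/4$): the $\nabla\rho^l$ and $\nabla^2\rho$ pieces are absorbed by the $\lambda_4\|\nabla\rho\|_{L^2}^2$ and $\lambda_3\|\nabla^2\rho\|_{L^2}^2$ dissipation (choosing $r_0$ small and $M$ large), while the medium-frequency pieces $\|\nabla^2\rho^m\|_{L^2}^2$, $\|\rho^m\|_{L^2}^2\le R_0^{2}\|\rho^m\|_{L^2}^2$ are bounded by $R_0^4\|\rho^m\|_{L^2}^2$. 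The term $\|\nabla^2\eta^h\|_{L^2}^2$ is directly part of the $\nabla^2\eta$ (indeed $\nabla^3\eta$) dissipation on the left. After these absorptions, we arrive at
\begin{align*}
\frac{\mathrm{d}}{\mathrm{d}t}\mathcal{E}(t)+\lambda\Big(\|\nabla(\rho,u,\theta)(t)\|_{H^2}^2+\|\nabla\rho(t)\|_{H^1}^2+\|(4\theta-\eta)(t)\|_{H^2}^2\Big)\leq C_5 R_0^4\|(\rho^m,u^m,\theta^m,\eta^m)(t)\|_{L^2}^2,
\end{align*}
for some $\lambda>0$, where the $H^2$-dissipation of $(4\theta-\eta)$ comes from combining the $L^2$-level term $\|(4\theta-\eta)\|_{L^2}^2$ in $\mathcal{L}$ with the $\|\nabla^2(\theta,\eta)\|_{L^2}^2$ and $\|\nabla^3(\theta,\eta)\|_{L^2}^2$ terms from $\mathcal{H}$.

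Finally I would integrate this inequality over $[0,t]$ and use the equivalence $\mathcal{E}(t)\backsim\|(\rho,u,\theta,\eta)(t)\|_{H^2}^2$ (with constants independent of $r_0,R_0,T$, because $\mathcal{H}$ and $\mathcal{L}$ each satisfy two-sided bounds with such constants by \eqref{G3.48}, \eqref{G3.70}, \eqref{G3.72} after fixing $r_0$ and $R_0$) to obtain \eqref{G3.75}, noting that $\mathcal{E}(0)\le C\|(\rho,u,\theta,\eta)(0)\|_{H^2}^2$. The main obstacle is the bookkeeping in the second paragraph: one must verify that \emph{every} term on the right-hand side of the combined inequality — in particular the low-frequency pieces $\|\nabla^2\rho^L\|_{L^2}^2$ and the high-frequency piece $\|\nabla\rho^H\|_{L^2}^2$, which straddle the dissipative norms — can be split via Lemma~\ref{L2.5} so that the low/high parts are genuinely absorbed (requiring $r_0$ small, $R_0>1$, $\delta$ small, $M$ large, chosen in this order) while only the medium-frequency remainder $R_0^4\|(\rho,u,\theta,\eta)^m\|_{L^2}^2$ survives; the powers of $R_0$ must be tracked honestly to produce the stated $R_0^4$, and one must confirm $C_5$ does not secretly depend on $T$.
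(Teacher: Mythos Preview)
Your strategy is the paper's strategy: combine Propositions~\ref{P3.4} and~\ref{P3.7}, split the frequency-localized right-hand terms via Lemma~\ref{L2.5}, absorb the low/high pieces into the dissipation, keep only the medium-frequency remainder, and integrate in time. The paper, however, simply adds \eqref{G3.49} and \eqref{G3.74} with \emph{unit} coefficients to obtain \eqref{G3.76} and then fixes $r_0$ small and $R_0$ large (conditions \eqref{G2.2}--\eqref{G2.3}) to perform all absorptions; no weight $M$ appears.

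Your extra parameter $M$ is unnecessary and causes a real problem in your bookkeeping. Your bound $\|\nabla\rho^H\|_{L^2}^2\le R_0^2\|\rho^m\|_{L^2}^2+\|\nabla^2\rho\|_{L^2}^2$ drops the crucial factor $R_0^{-2}$ on the last term (Lemma~\ref{L2.5} gives $\|\nabla\rho^h\|_{L^2}\le R_0^{-1}\|\nabla^2\rho^h\|_{L^2}$). As written, $\tfrac{M}{2}\|\nabla^2\rho\|_{L^2}^2$ cannot be absorbed into $\lambda_3\|\nabla^2\rho\|_{L^2}^2$ by ``choosing $M$ large'' --- that makes things worse, not better. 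With the correct factor you need $M/(2R_0^2)<\lambda_3$, so $R_0$ must be chosen \emph{after} $M$, the reverse of your stated order ``$r_0$, $R_0$, $\delta$, $M$''. The same applies to your absorption of $C_1\|\nabla^2(u,\theta)^l\|_{L^2}^2$: rather than invoking $M\lambda_4\gg C_1$, the paper uses $\|\nabla^2 f^l\|_{L^2}\le r_0\|\nabla f^l\|_{L^2}$ and takes $r_0$ small, which avoids any coupling between $M$ and $R_0$. If you set $M=1$ and let $r_0,R_0$ do all the work as in \eqref{G3.78}--\eqref{G3.80}, every circularity disappears and the $R_0^4$ in front of the medium-frequency remainder falls out directly.
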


\begin{proof}
Combining \eqref{G3.49} with \eqref{G3.74}, we get
\begin{align}\label{G3.76}
&\frac{\mathrm{d}}{\mathrm{d}t}\|(\rho,u,\theta,\eta)\|_{H^2}^2+\lambda_5\Big(\|\nabla(u,\theta,\eta)(t)\|_{H^2}^2+\|\nabla \rho(t)\|_{H^1}^2+\|(4\theta-\eta)(t)\|_{H^2}^2
\Big)\nonumber\\
&\,\quad \leq C_6\Big(\|\nabla\rho^H(t)\|_{L^2}^2+\|\nabla^2\rho^L(t)\|_{L^2}^2     +\|\nabla^2(u,\theta,\eta)(t)\|_{L^2}^2\Big),      
\end{align}
where $\lambda_5=\min\{\lambda_3,\lambda_4 \}$ and $C_6$ is independent of
$r_0$, $R_0$ and $T$.
Here, we have used the fact that
\begin{align*}
\|({\rho},{u},\theta,\eta)\|_{H^2}\backsim\|({\rho},{u},\theta,\eta)\|_{L^2}+\|\nabla^2({\rho},{u},\theta,\eta)\|_{L^2}.  
\end{align*}
By using the definition of $r_0$ and $R_0$ in \eqref{G2.2}--\eqref{G2.3}, Lemma \ref{L2.1} and Lemma \ref{L2.5}, we easily obtain
\begin{align}
\|\nabla^{2}{\rho}^{L}\|_{L^{2}}\leq&\,\|\nabla^{2}{\rho}^{l}\|_{L^{2}}+\|\nabla^{2}{\rho}^{m}\|_{L^{2}}\nonumber\\
\leq&\, r_{0}\|\nabla{\rho}^{l}\|_{L^{2}}+R_{0}^{2}\|{\rho}^{m}\|_{L^{2}}\nonumber\\
\leq&\, r_0\|\nabla{\rho}\|_{L^{2}}+R_{0}^{2}\|{\rho}^{m}\|_{L^{2}}\nonumber\\
\leq&\, \sqrt{\frac{\lambda_5}{2 C_6}}\|\nabla{\rho}\|_{L^{2}}+R_{0}^{2}\|{\rho}^{m}\|_{L^{2}},   \label{G3.78}\\
\|\nabla{\rho}^{H}\|_{L^{2}}\leq&\,\|\nabla{\rho}^{m}\|_{L^{2}}+\|\nabla{\rho}^{h}\|_{L^{2}}\nonumber\\
\leq&\, R_{0}\|{\rho}^{m}\|_{L^{2}}+\frac{1}{R_{0}}\|\nabla^{2}{\rho}^{h}\|_{L^{2}}\nonumber\\
\leq&\, R_{0}\|{\rho}^{m}\|_{L^{2}}+\frac{1}{R_{0}}\|\nabla^{2}{\rho}\|_{L^{2}}\nonumber\\
\leq&\, R_{0}^2\|{\rho}^{m}\|_{L^{2}}+\sqrt{\frac{\lambda_5}{2 C_6}}\|\nabla^{2}{\rho}\|_{L^{2}}. \label{G3.79}
\end{align}
Similarly,
\begin{align}\label{G3.80}
\|\nabla^{2}(u,\theta,\eta)\|_{L^{2}}\leq&\,\|\nabla^{2}(u^{l},\theta^{l},\eta^{l})\|_{L^{2}}+\|\nabla^{2}(u^{m},\theta^{m},\eta^{m})\|_{L^{2}}+\|\nabla^{2}(u^{h},\theta^{h},\eta^{h})\|_{L^{2}}\nonumber\\
\leq&\,r_{0}\|\nabla(u,\theta,\eta)\|_{L^{2}}+R_{0}^{2}\|(u^{m},\theta^{m},\eta^{m})\|_{L^{2}}+\frac{1}{R_{0}}\|\nabla^{3}(u,\theta,\eta)\|_{L^{2}}\nonumber\\
\leq&\,\sqrt{\frac{\lambda_5}{2 C_6}}\Big(\|\nabla(u,\theta,\eta)\|_{L^{2}}+\|\nabla^{3}(u,\theta,\eta)\|_{L^{2}}\Big)+R_{0}^{2}\|(u^{m},\theta^{m},\eta^{m})\|_{L^{2}}.   
\end{align}
Here we have chosen  $r_0$ and $ R_0$   satisfying
\begin{gather}
 0<r_0\leq \min\Big\{\frac{1}{4},\sqrt{\frac{\lambda_5}{2C_6}}        \Big\},   \label{G2.2}\\
 R_0> \max\Big\{1,\sqrt{\frac{\lambda_5}{2C_6}},2\sqrt{\frac{C_1}{\lambda_3}} \Big\}. \label{G2.3}
\end{gather}

Plugging the estimates \eqref{G3.78}--\eqref{G3.80} into \eqref{G3.76} and
integrating the resulting inequality with $t$ over $[0,t]$,
we eventually get the desired inequality \eqref{G3.75}. 
\end{proof}

\subsection{Global global existence of strong solutions}

In this section, in order to establish the global existence of strong solutions
to the problem \eqref{I-2}--\eqref{I--2}, we shall study the $L^2$-norm estimates
of medium-frequency part of $(\rho,u,\theta,\eta)$.
Let $\mathbb{A}$ be a matrix of differential operators, which has the following form:
\begin{align*}
\mathbb{A}=\left(
\begin{matrix}
0 & {\rm div} & 0 & 0   \\
\nabla & -\Delta-2\nabla{\rm div} & \nabla & \nabla    \\
0 & {\rm div} & -\Delta+4 & -1   \\
0 & 0 & -4 & 1-\Delta  \\
\end{matrix}\right).
\end{align*}
Then, the corresponding linearized problem to \eqref{I-2}--\eqref{I--2}
reads as  
\begin{equation}\label{G3.82}
\left\{
\begin{aligned}
&\,\partial_t\overline{\mathbb{U}}+\mathbb{A}\overline{\mathbb{U}}=0, \\
&\,\overline{\mathbb{U}}|_{t=0}=\overline{\mathbb{U}}(0),
\end{aligned}\right.
\end{equation}
for $t>0$.
Here, 
\begin{align*}
\overline{\mathbb{U}}(t):=(\overline{\rho}(t),\overline{u}(t),\overline{\theta}(t),\overline{\eta}(t))^{\top},\quad \mathbb{U}(0):=({\rho}_{0},{u}_{0},{\theta}_{0},{\eta}_{0})^{\top}.    
\end{align*}
By performing Fourier transform on \eqref{G3.82} on $x$, we deduce that
\begin{align*}
\overline{\mathbb{U}}(t)=\mathrm{A}(t)\mathbb{U}(0),    
\end{align*}
with $\mathrm{A}(t)=e^{-t\mathbb{A}}$($t\geq 0$) is a semigroup
determined by $\mathbb{A}$ and $\mathcal{F}(\mathrm{A}(t)g):=e^{-t{\mathbb{A}}_{\xi}}\widehat{g}(\xi)$,
where
\begin{align*}
\mathbb{A}_{\xi}=\left(
\begin{matrix}
0 & i\xi^T & 0 & 0   \\
i\xi & |\xi|^2\delta_{ij}+2\xi_i\xi_j & i\xi & i\xi   \\
0 & i\xi^T & |\xi|^2+4 & -1   \\
0 & 0 & -4 & |\xi|^2+1  \\
\end{matrix}\right).   
\end{align*}
In what follows, we give the estimate of strong solutions for the medium-frequency part.
\begin{lem}\label{L3.9}
For any integer $k\geq 0$, it holds that
\begin{align}\label{G3.86}
\|\nabla^k\big(\mathrm{A}(t)\mathbb{U}^m(0)\big)\|_{L^2}\leq C{e}^{-Ct}\|\mathbb{U}(0)\|_{L^2}.   
\end{align}
\end{lem}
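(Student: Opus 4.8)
The statement is a standard spectral-decay estimate for the medium-frequency part of the linearized semigroup, so the plan is to reduce everything to the Fourier side and exploit compactness of the medium-frequency annulus $\{r_0/2 \le |\xi| \le R_0+1\}$. By Plancherel's theorem,
\begin{align*}
\|\nabla^k\big(\mathrm{A}(t)\mathbb{U}^m(0)\big)\|_{L^2}^2
= \int_{\mathbb{R}^3} |\xi|^{2k}\,\big|\big(1-\phi_0(\xi)-\phi_1(\xi)\big)\,e^{-t\mathbb{A}_\xi}\widehat{\mathbb{U}}(0)(\xi)\big|^2\,\mathrm{d}\xi,
\end{align*}
and the cut-off factor is supported in the compact annulus $\mathcal{A}:=\{r_0/2\le|\xi|\le R_0+1\}$, on which $|\xi|^{2k}\le (R_0+1)^{2k}$ is uniformly bounded. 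Hence it suffices to show that the matrix exponential satisfies a uniform bound $|e^{-t\mathbb{A}_\xi}|\le C e^{-ct}$ for all $\xi\in\mathcal{A}$ and all $t\ge 0$, with constants $C,c>0$ independent of $\xi$.

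\textbf{Spectral gap on the annulus.} The key step is to establish that the eigenvalues of $\mathbb{A}_\xi$ have real part bounded below by a positive constant $c>0$, uniformly for $\xi\in\mathcal{A}$. First I would verify that for each fixed $\xi\ne 0$ the matrix $\mathbb{A}_\xi$ is hyperbolic in the sense that $\mathrm{Re}\,\lambda>0$ for every eigenvalue $\lambda$ of $\mathbb{A}_\xi$; this is the dissipativity of the linearized operator and can be checked via the energy identity — pairing $\mathbb{A}_\xi V$ with $V$ (with suitable symmetrizer weights reflecting the change of variables $(\theta,\eta)\mapsto(4\theta-\eta,\theta+\eta)$ used in \eqref{G3.52}--\eqref{G3.53}) produces a nonnegative dissipation term that vanishes only at $V=0$ when $\xi\ne 0$, so no eigenvalue can sit on the imaginary axis. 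Then, since the eigenvalues depend continuously on $\xi$ and $\mathcal{A}$ is compact and bounded away from $\xi=0$, the quantity $\min_{\xi\in\mathcal{A}}\min_{\lambda\in\sigma(\mathbb{A}_\xi)}\mathrm{Re}\,\lambda$ is attained and strictly positive; call it $2c$. I would also need a uniform bound on the transition matrices (or, more robustly, use a resolvent/contour-integral representation $e^{-t\mathbb{A}_\xi}=\frac{1}{2\pi i}\oint e^{-t\lambda}(\lambda I-\mathbb{A}_\xi)^{-1}\,\mathrm{d}\lambda$ around a fixed contour in $\{\mathrm{Re}\,\lambda\ge c\}$ enclosing all eigenvalues for $\xi\in\mathcal{A}$) to conclude $|e^{-t\mathbb{A}_\xi}|\le C e^{-ct}$ uniformly; the contour can be chosen once and for all since $\sigma(\mathbb{A}_\xi)$ stays in a fixed compact region for $\xi\in\mathcal{A}$.

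\textbf{Conclusion.} Combining the two ingredients,
\begin{align*}
\|\nabla^k\big(\mathrm{A}(t)\mathbb{U}^m(0)\big)\|_{L^2}^2
\le (R_0+1)^{2k}\,C^2 e^{-2ct}\int_{\mathcal{A}} |\widehat{\mathbb{U}}(0)(\xi)|^2\,\mathrm{d}\xi
\le C e^{-2ct}\|\mathbb{U}(0)\|_{L^2}^2,
\end{align*}
which is exactly \eqref{G3.86} (absorbing $(R_0+1)^{k}$ and $C$ into the generic constant and renaming $c$). I expect the main obstacle to be the verification of the uniform spectral gap: showing that no eigenvalue of $\mathbb{A}_\xi$ touches the imaginary axis for $\xi\ne 0$ requires handling the non-self-adjoint coupling between the momentum equation (through $\nabla\eta$, the term without dissipation) and the radiative unknown, and the cleanest route is probably the symmetrized energy estimate in the transformed variables $(\rho,u,G,F)$ of \eqref{G3.54}, where the reaction term $5G$ and the Laplacians supply damping in all components once $\xi\ne 0$; the compactness argument on $\mathcal{A}$ then upgrades pointwise dissipativity to a uniform rate for free. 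The low-frequency degeneracy as $\xi\to 0$ (which is what forces the polynomial decay rates elsewhere in the paper) never enters here precisely because $\mathcal{A}$ is bounded away from the origin, so the decay is genuinely exponential for the medium-frequency piece.
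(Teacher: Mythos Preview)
Your proposal is correct and would prove the lemma, but it differs from the paper's route in two respects. First, the paper does not work with the full $6\times 6$ symbol $\mathbb{A}_\xi$ directly: it first applies the Hodge decomposition $u=-\Lambda^{-1}\nabla d-\Lambda^{-1}\mathrm{curl}(\mathcal{P}u)$, which splits off the incompressible part $\mathcal{P}u$ (governed by a pure heat equation, so the exponential bound on the annulus is immediate from \eqref{G5.22}) and reduces the remaining analysis to the $4\times 4$ symbol $\mathbb{J}(\xi)$ for $(\rho,d,\theta,\eta)$. Second, for that $4\times 4$ block the paper does not argue by energy identity plus compactness; instead it computes the characteristic polynomial explicitly and checks the Routh--Hurwitz determinants $A_1,\dots,A_4>0$ (Proposition~\ref{P5.1}), which yields the uniform bound $|e^{-t\mathbb{J}(\xi)}|\le Ce^{-\kappa t}$ on any annulus away from the origin.

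Your abstract route (no imaginary eigenvalues for $\xi\ne 0$ via a symmetrized energy identity, then continuity of spectra plus compactness of $\mathcal{A}$, then a contour-integral bound on the semigroup) is perfectly valid and has the advantage of avoiding the algebraic Routh--Hurwitz computation; it also transfers more easily to related systems. One small point to tighten: the basic energy in the $(\rho,d,G,F)$ variables does not directly damp $\widehat{\rho}$, so the sentence ``dissipation vanishes only at $V=0$'' needs the extra step of back-substituting into the equations (if the dissipation vanishes then $\widehat{d}=\widehat{F}=\widehat{G}=0$, and the momentum equation forces $|\xi|\widehat{\rho}=0$). With that caveat, your compactness argument goes through. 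The paper's approach, by contrast, is more explicit and self-contained but tied to the specific coefficients of this model.
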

\begin{proof}
Taking $r=\frac{r_0}{2}$ and $R=R_0+1$ in Proposition \ref{P5.1}, then
utilizing   \eqref{G5.22} and Plancherel theorem, one has
\begin{align}\label{G3.87}
\big\|\partial_{x}^{\alpha}(\overline{p}^{m},\overline{d}^{m},
\overline{\theta}^{m},\overline{\eta}^{m})(t)\big\|_{L^{2}}
=&\,\big\|(i\xi)^{\alpha}(\widehat{\overline{\rho}^{m}},\widehat{\overline{d}^{m}},
\widehat{\overline{\theta}^{m}},\widehat{\overline{\eta}^{m}})(t)\big\|_{L_{\xi}^{2}}\nonumber\\
\leq &\,C\Big(\int_{\frac{r_{0}}{2}\leq|\xi|\leq R_{0}+1}|\xi|^{2|\alpha|}\big|(\widehat{\overline{\rho}},\widehat{\overline{d}},\widehat{\overline{\theta}},\widehat{\overline{\eta}})(\xi,t)\big|^{2}\mathrm{d}\xi\Big)^{\frac{1}{2}}\nonumber\\
\leq&\, C\mathrm{e}^{-\kappa t}\Big(\int_{\mathbb{R}^{3}}\big|(\widehat{{\rho}},\widehat{{d}},\widehat{{\theta}},\widehat{{\eta}})(\xi,0)\big|^{2}\mathrm{d}\xi\Big)^{\frac{1}{2}}\nonumber\\
\leq&\, C{e}^{-\kappa t}\|({\rho}_{0},{u}_{0},{\theta}_{0},{\eta}_{0})\|_{L^{2}}.   
\end{align}
Similar to \eqref{G3.87}, using \eqref{G5.22}, we have
\begin{align}\label{G3.88}
\|\partial_x^\alpha(\mathcal{P}\overline{u})^m)(t)\|_{L^2}\leq C{e}^{-\kappa_1t}\|{u}(0)\|_{L^2},
\end{align}
for some constant $\kappa_1>0$.
Combining \eqref{G3.87}--\eqref{G3.88} and the identity \eqref{G5.3},
we complete the proof of \eqref{G3.86}.
\end{proof}
Additionally, based on   Lemma \ref{L3.9}, we establish the   strong
solutions to the nonlinear problem \eqref{I-2}--\eqref{I--2}:
\begin{equation}\label{G3.89}
\left\{
\begin{aligned}
&\,\partial_t\overline{\mathbb{U}}+\mathbb{A}\overline{\mathbb{U}}=\mathcal{N}(\mathbb{U}), \\
&\,\overline{\mathbb{U}}|_{t=0}=\overline{\mathbb{U}}(0),
\end{aligned}\right.
\end{equation}
for $t>0$, where
\begin{align*}
\mathbb{U}(t):=(\overline{\rho}(t),\overline{u}(t),\overline{\theta}(t),\overline{\eta}(t))^\top,\quad\text{and}\quad \mathcal{N}(\mathbb{U}):=(\mathcal{N}_{1},\mathcal{N}_{2},\mathcal{N}_{3},\mathcal{N}_4)^{\top}.  
\end{align*}
Through Duhamel's principle, \eqref{G3.89} can
be rewritten as  
\begin{align}\label{G3.91}
\mathbb{U}(t)=\mathrm{A}(t)\mathbb{U}(0)+\int_0^t\mathrm{A}(t-\tau)\mathcal{N}(\mathbb{U})(\tau)\mathrm{d}\tau.    
\end{align}
\begin{lem}\label{L3.10}
For any integer $k\geq 0$, it holds that
\begin{align}\label{G3.92}
\int_{0}^{t}\|\nabla^{k}\mathbb{U}^{m}(\tau)\|_{L^{2}}^{2}\mathrm{d}\tau\leq C\|\mathbb{U}(0)\|_{L^{2}}^{2}+C\delta\int_{0}^{t}\Big(\|\nabla(\rho,\eta)(\tau)\|_{L^{2}}^{2}+\|\nabla(u,\theta)(\tau)\|_{H^{1}}^{2}\Big)\mathrm{d}\tau.   
\end{align}
\end{lem}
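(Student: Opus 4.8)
The plan is to read off $\mathbb{U}^{m}$ from the Duhamel formula \eqref{G3.91}, exploit the exponential decay of the medium-frequency semigroup furnished by Lemma \ref{L3.9}, and then close the estimate with an $L^{2}$-bound on the nonlinearity $\mathcal{N}(\mathbb{U})$ expressed entirely through the dissipative quantities on the right-hand side of \eqref{G3.92}. Since the cut-off $f\mapsto f^{m}$ and the solution operator $\mathrm{A}(t)$ are both Fourier multipliers, they commute; projecting \eqref{G3.91} onto the medium-frequency regime gives
\begin{align*}
\mathbb{U}^{m}(t)=\mathrm{A}(t)\mathbb{U}^{m}(0)+\int_{0}^{t}\mathrm{A}(t-\tau)\mathcal{N}^{m}(\mathbb{U})(\tau)\,\mathrm{d}\tau.
\end{align*}
Applying $\nabla^{k}$, taking $L^{2}$-norms and invoking Lemma \ref{L3.9} --- with initial datum $\mathbb{U}(0)$ for the first term and, in the integrand, with initial datum $\mathcal{N}(\mathbb{U})(\tau)$ at time $t-\tau$ --- we get
\begin{align*}
\|\nabla^{k}\mathbb{U}^{m}(t)\|_{L^{2}}\leq C\mathrm{e}^{-Ct}\|\mathbb{U}(0)\|_{L^{2}}+C\int_{0}^{t}\mathrm{e}^{-C(t-\tau)}\|\mathcal{N}(\mathbb{U})(\tau)\|_{L^{2}}\,\mathrm{d}\tau.
\end{align*}

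Next I would square this, integrate in $t$ over $[0,t]$, bound $\int_{0}^{t}\mathrm{e}^{-2Cs}\,\mathrm{d}s\leq C$ for the linear contribution, and apply Young's inequality for the time convolution (with kernel $\mathrm{e}^{-Cs}\in L^{1}(0,\infty)$) to the Duhamel contribution. This yields
\begin{align*}
\int_{0}^{t}\|\nabla^{k}\mathbb{U}^{m}(\tau)\|_{L^{2}}^{2}\,\mathrm{d}\tau\leq C\|\mathbb{U}(0)\|_{L^{2}}^{2}+C\int_{0}^{t}\|\mathcal{N}(\mathbb{U})(\tau)\|_{L^{2}}^{2}\,\mathrm{d}\tau,
\end{align*}
so the lemma reduces to proving
\begin{align*}
\|\mathcal{N}(\mathbb{U})(\tau)\|_{L^{2}}\leq C\delta\Big(\|\nabla(\rho,\eta)(\tau)\|_{L^{2}}+\|\nabla(u,\theta)(\tau)\|_{H^{1}}\Big),
\end{align*}
since then, after squaring, the factor $\delta^{2}$ is absorbed into $C\delta$ because $\delta<1$.

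Finally, for this nonlinear bound I would estimate each summand of $\mathcal{N}_{1},\dots,\mathcal{N}_{4}$ as a product of a factor that is $O(\delta)$ in $L^{\infty}$ --- using the Gagliardo--Nirenberg inequality $\|f\|_{L^{\infty}}\leq C\|f\|_{H^{2}}$ from Lemma \ref{L2.1}, the a priori bound \eqref{G3.1}, and $|g(\rho)|\leq C\rho$, $|h(\rho)|\leq C$ --- times a factor controlled by $\|\nabla\rho\|_{L^{2}}$, $\|\nabla\eta\|_{L^{2}}$, $\|\nabla u\|_{L^{2}}$, $\|\nabla^{2}u\|_{L^{2}}$ or $\|\nabla^{2}\theta\|_{L^{2}}$, each of which lies inside the claimed right-hand side. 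I expect the only real obstacle to be the zero-order contributions --- $g(\rho)(\eta-4\theta)$ and $g(\rho)\theta$ in $\mathcal{N}_{3}$, the polynomial terms $h(\rho)(\theta^{4}+4\theta^{3}+6\theta^{2})$ and $\mathcal{N}_{4}=\theta^{4}+4\theta^{3}+6\theta^{2}$ --- which must \emph{not} be handled by an $L^{\infty}\times L^{2}$ H\"{o}lder split, since that would leave $\|(\theta,\eta)\|_{L^{2}}$ on the right, a quantity absent from the dissipation in \eqref{G3.92}; instead one uses the $L^{3}\times L^{6}$ split together with the embedding $\|w\|_{L^{6}}\leq C\|\nabla w\|_{L^{2}}$, e.g. $\|g(\rho)(\eta-4\theta)\|_{L^{2}}\leq\|\rho\|_{L^{3}}\|\eta-4\theta\|_{L^{6}}\leq C\delta\|\nabla(\theta,\eta)\|_{L^{2}}$, turning every zero-order factor into a gradient. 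These are precisely the computations already carried out in the derivation of \eqref{G3.55} and \eqref{G3.59}, so they transfer, and substituting the resulting bound on $\|\mathcal{N}(\mathbb{U})\|_{L^{2}}$ completes the proof.
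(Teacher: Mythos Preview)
Your proposal is correct and follows essentially the same route as the paper: project Duhamel's formula onto the medium-frequency band, apply the exponential decay of Lemma~\ref{L3.9}, square and integrate in time, and close with an $L^{2}$ bound on $\mathcal{N}(\mathbb{U})$ via the $L^{3}\times L^{6}$ H\"{o}lder split on the zero-order terms. The only cosmetic difference is that you invoke Young's convolution inequality for the $L^{2}$-in-time estimate of the Duhamel term, whereas the paper writes this out explicitly via H\"{o}lder in $\tau$ followed by exchanging the order of integration; the two arguments are equivalent.
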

\begin{proof}
Applying the operator $(1-\phi_0(D_x)-\phi_1(D_x))\mathbb{I}$ to \eqref{G3.91},
we obtain
\begin{align}\label{G3.93}
\mathbb{U}^m(t)=\mathrm{A}(t)\mathbb{U}^m(0)+\int_0^t\mathrm{A}(t-\tau)\mathcal{N}^m(\mathbb{U})(\tau)\mathrm{d}\tau.  
\end{align}
Then, it follows from Lemma \ref{L3.9} and \eqref{G3.93} that
\begin{align*}
\|\nabla^k\mathbb{U}^m(t)\|_{L^2}\leq C e^{-Ct}\|\mathbb{U}(0)\|_{L^2}+C\int_0^t e^{-C(t-\tau)}\|\mathcal{N}(\mathbb{U})(\tau)\|_{L^2}\mathrm d\tau,    
\end{align*}
which implies that 
\begin{align}\label{G3.95}
\int_{0}^{t}\|\nabla^{k}\mathbb{U}^{m}(\tau)\|_{L^{2}}^{2}\mathrm{d}\tau\leq C\|\mathbb{U}(0)\|_{L^{2}}^{2}+C\int_{0}^{t}\mathrm{d}s\Big(\int_{0}^{s}e^{-C(s-\tau)}\|\mathcal{N}(\mathbb{U})(\tau)\|_{L^{2}}\mathrm{d}\tau\Big)^{2}.   
\end{align}
According to H\"{o}lder’s inequality and exchanging the order of integration,
we derive that
\begin{align}\label{G3.96}
&\int_{0}^{t}\mathrm{d}s\Big(\int_{0}^{s}\mathrm{e}^{-C(s-\tau)}\|\mathcal{N}(\mathrm{U})(\tau)\|_{L^{2}}\mathrm{d}\tau\Big)^{2}\nonumber\\
\leq&\, C\int_{0}^{t}\mathrm{d}s\Big(\int_{0}^{s}e^{-C(s-\tau)}\mathrm{d}\tau\Big)\Big(\int_{0}^{s}{e}^{-C(s-\tau)}\|\mathcal{N}(\mathrm{U})(\tau)\|_{L^{2}}^{2}\mathrm{d}\tau\Big)\nonumber\\
\leq&\, C\int_{0}^{t}\mathrm{d}s\int_{0}^{s}{e}^{-C(s-\tau)}\|\mathcal{N}(\mathrm{U})(\tau)\|_{L^{2}}^{2}\mathrm{d}\tau\nonumber\\
\leq &\,C\int_{0}^{t}\|\mathcal{N}(\mathrm{U})(\tau)\|_{L^{2}}^{2}\mathrm{d}\tau\int_{\tau}^{t}e^{-C(s-\tau)}\mathrm{d}s\nonumber\\
\leq&\, C\int_{0}^{t}\|\mathcal{N}(\mathrm{U})(\tau)\|_{L^{2}}^{2}\mathrm{d}\tau.  
\end{align}
Substituting \eqref{G3.96} into \eqref{G3.95} yields
\begin{align}\label{G3.97}
\int_{0}^{t}\|\nabla^{k}\mathbb{U}^{m}(\tau)\|_{L^{2}}^{2}\mathrm{d}\tau\leq C\|\mathbb{U}(0)\|_{L^{2}}^{2}+C\int_{0}^{t}\|\mathcal{N}(\mathrm{U})(\tau)\|_{L^{2}}^{2}\mathrm{d}\tau.   
\end{align}
By applying Lemmas \ref{L2.1}--\ref{L2.2}, it is obvious that
\begin{align}\label{G3.98}
\|\mathcal{N}(\mathbb{U})(\tau)\|_{L^2}\leq&\, C\|({\rho},{u},\theta)\|_{L^\infty}\|\nabla({\rho},{u})\|_{L^2}+C\|{\rho}\|_{L^\infty}\|\nabla^2({u},\theta)\|_{L^2} \nonumber\\
&\,+C\|g(\rho)\|_{L^{\infty}}\|\nabla\eta\|_{L^2}+C\|{\rm div}u\|_{L^3}\|{\rm div}u\|_{L^6}\nonumber\\
&\,+C\|\rho\|_{L^3}\|\eta\|_{L^6}+C\|\rho\|_{L^3}\|\theta\|_{L^6}+C\|\nabla u\|_{L^3}\|\nabla u\|_{L^6}\nonumber\\
&\,+C\|\theta\|_{L^3}\|\theta\|_{L^6}\Big(1+\|\theta\|_{L^{\infty}}+\|\theta\|_{L^{\infty}}^2      \Big)\nonumber\\
\leq&\, C\delta\Big(\|\nabla(\rho,u,\theta,\eta)\|_{L^2}+\|\nabla^2(u,\theta)\|_{L^2}        \Big).
\end{align}
Putting \eqref{G3.98} into \eqref{G3.97}, we eventually get \eqref{G3.92}.
\end{proof}
Finally, we give the uniform of a priori estimates of strong solutions.
For any $t\in [0,T]$, we define
\begin{align*}
\mathbf{N}(t):=&\,\sup_{0\leq\tau\leq t}\|(\rho,u,\theta,\eta)(\tau)\|_{H^{2}}^{2}\nonumber\\
&\,+\int_{0}^{t}\Big(\|\nabla\rho(\tau)\|_{H^{1}}^{2}+\|\nabla(u,\theta,\eta)(\tau)\|_{H^{2}}^{2}+\|(4\theta-\eta)(\tau)\|_{H^2}^2\Big)\mathrm{d}\tau.   
\end{align*}
This together with the smallness of $\delta$,
Lemma \ref{L3.8} and Lemma \ref{L3.10} yields
\begin{align}\label{nt}
\mathbf{N}(t)\leq C \mathbf{N}(0).    
\end{align}

\begin{proof}[Proof of Theorem \ref{T1.1}]
By the fixed point theorem of
the standard iteration arguments, we directly get the local existence
and uniqueness of strong solutions to system \eqref{I-2}--\eqref{I--2}
(see Section 2.1 in \cite{DD-AMPA-2017}).
For simplicity, we omit some details here.   Combining the above a priori estimates \eqref{nt} and 
the standard 
continuity principle, we can easily obtain the global existence and uniqueness of 
strong solutions.
Hence,   the proof of Theorem \ref{T1.1} is completed.
\end{proof}

\section{Optimal time-decay rates of strong solutions}
In this
section, we shall establish the optimal
time-decay rates of strong solutions to our
system under the assumption of Theorem \ref{T1.2}.
The estimates of low-medium frequency part of $(\rho,u,\theta,\eta)$ 
can be obtained through the spectral analysis on the linearized version of
\eqref{I-2}--\eqref{I--2}. Based on
these estimates, we consequently get the time-decay rates of strong solutions 
to the nonlinear problem \eqref{I-2}--\eqref{I--2}.

\subsection{Estimates on the low-medium frequency part}
\begin{lem}\label{L4.1}
For any integer $k\geq 0$,
it holds that 
\begin{align}\label{G4.1}
\|\nabla^k\big(\mathrm{A}(t)\mathbb{U}^l(0)\big)\|_{L^2}\leq C(1+t)^{-\frac{3}{2}(\frac{1}{p}-\frac{1}{2})-\frac{k}{2}}\|\mathbb{U}(0)\|_{L^p}, \quad   1\leq p\leq 2  .
\end{align}
\end{lem}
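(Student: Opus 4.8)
The plan is to derive the decay estimate \eqref{G4.1} for the low-frequency part directly from the spectral analysis of the linearized operator $\mathbb{A}_\xi$ carried out in the Appendix. The key point is that on the low-frequency regime $|\xi|\le r_0$ the eigenvalues of $-\mathbb{A}_\xi$ have real part bounded above by $-c|\xi|^2$ for some $c>0$ (the parabolic-type dissipation of the system), so that the fundamental solution satisfies a pointwise bound of the form $|\widehat{(\mathrm{A}(t)g)^l}(\xi)|\le C e^{-c|\xi|^2 t}|\widehat{g}(\xi)|$. This should already be available (or immediate) from Proposition \ref{P5.1} and the estimate \eqref{G5.22} in the Appendix; I would simply restrict that bound to $|\xi|\le r_0$.

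First I would write, using Plancherel's theorem,
\begin{align*}
\|\nabla^k\big(\mathrm{A}(t)\mathbb{U}^l(0)\big)\|_{L^2}^2
= \int_{|\xi|\le r_0} |\xi|^{2k} \big|\widehat{\mathrm{A}(t)\mathbb{U}(0)}(\xi)\big|^2\,\dd\xi
\le C\int_{|\xi|\le r_0} |\xi|^{2k} e^{-2c|\xi|^2 t}\big|\widehat{\mathbb{U}(0)}(\xi)\big|^2\,\dd\xi.
\end{align*}
Next I would apply Hölder's inequality in $\xi$ with exponents $\frac{p}{2-p}$ and $\frac{p}{2(p-1)}$ (the conjugate pair adapted to moving from an $L^2$-in-$\xi$ quantity to an $L^{p'}$-in-$\xi$ quantity with $p'=p/(p-1)$), which by the Hausdorff--Young inequality gives $\|\widehat{\mathbb{U}(0)}\|_{L^{p'}_\xi}\le C\|\mathbb{U}(0)\|_{L^p}$ for $1\le p\le 2$. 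This reduces the matter to the scalar integral
\begin{align*}
\Big(\int_{|\xi|\le r_0} |\xi|^{\frac{2kp}{2-p}} e^{-\frac{2cp}{2-p}|\xi|^2 t}\,\dd\xi\Big)^{\frac{2-p}{p}},
\end{align*}
and a change of variables $\xi=\zeta/\sqrt{t}$ (for $t\ge 1$; the case $t\le 1$ being trivial since the factor $(1+t)^{-\cdots}$ is then bounded below) shows this integral is $\le C t^{-\frac{3}{2}(\frac{2}{p}-1)-k}$, i.e. $\le C(1+t)^{-3(\frac1p-\frac12)-k}$. Taking square roots yields exactly the claimed rate $(1+t)^{-\frac{3}{2}(\frac1p-\frac12)-\frac k2}$.

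I expect the only genuine obstacle to be establishing the pointwise semigroup bound $|\widehat{(\mathrm{A}(t)g)^l}(\xi)|\le C e^{-c|\xi|^2 t}|\widehat{g}(\xi)|$ uniformly on $|\xi|\le r_0$; everything after that is the standard Fourier-splitting computation above. Since the spectral analysis of $\mathbb{A}_\xi$ (eigenvalue expansions for small $|\xi|$ and the resulting decay of $e^{-t\mathbb{A}_\xi}$) is precisely what the Appendix provides, I would invoke Proposition \ref{P5.1} and \eqref{G5.22} there, noting that the radiation variables $(\theta,\eta)$ couple to $(\rho,u)$ only through lower-order terms so the low-frequency dispersion relation is a perturbation of the compressible Navier--Stokes one; the eigenvalue with the slowest decay still behaves like $-c|\xi|^2$ as $\xi\to 0$, which is all that is needed. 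A minor technical point is the $p=2$ endpoint, where Hausdorff--Young degenerates to Plancherel and the exponent $\frac{p}{2-p}$ blows up, but there one simply uses $|\xi|^{2k}e^{-2c|\xi|^2t}\le C(1+t)^{-k}$ directly together with $\|\widehat{\mathbb{U}(0)}\|_{L^2_\xi}=\|\mathbb{U}(0)\|_{L^2}$.
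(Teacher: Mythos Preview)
Your approach is essentially the paper's: obtain the pointwise Fourier bound $|e^{-t\mathbb{A}_\xi}\widehat{\mathbb{U}(0)}(\xi)|\le Ce^{-c|\xi|^2 t}|\widehat{\mathbb{U}(0)}(\xi)|$ on $|\xi|\le r_0$ from the Appendix, then apply Plancherel, H\"older and Hausdorff--Young, and evaluate the resulting Gaussian integral. One correction: the low-frequency pointwise bound you need is \eqref{G5.20} (together with \eqref{G5.22} for the incompressible part of $u$, recombined via the Hodge decomposition \eqref{G5.3}), not Proposition~\ref{P5.1}, which is the medium-frequency Routh--Hurwitz estimate yielding $e^{-\kappa t}$ rather than $e^{-c|\xi|^2 t}$.
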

\begin{proof}
It follows from \eqref{G2.2}, \eqref{G5.20} and Plancherel theorem, we obtain
that
\begin{align*}
\|\partial_{x}^{\alpha}(\overline{\rho}^l,\overline{d}^l,\overline{\theta}^l,\overline{\eta}^l)(t)\|_{L^{2}}
=&\,\big\|(i\xi)^{\alpha}(\widehat{\overline{\rho}^l},\widehat{\overline{d}^l},
\widehat{\overline{\theta}^l},\widehat{\overline{\eta}^l})(t)\big\|_{L_{\xi}^{2}}
\nonumber\\
=&\,\Big(\int_{\mathbb{R}^{3}}\big|(i\xi)^{\alpha}(\widehat{\overline{\rho}^l},\widehat{\overline{d}^l},\widehat{\overline{\theta}^l},\widehat{\overline{\eta}^l})(\xi,t)\big|^{2}\mathrm{d}\xi\Big)^{\frac{1}{2}}\nonumber\\
\leq&\, C\Big(\int_{|\xi|\leq r_{0}}|\xi|^{2|\alpha|}\big|(\widehat{\overline{\rho}},\widehat{\overline{d}},\widehat{\overline{\theta}},\widehat{\overline{\eta}})(\xi,t)\big|^{2}\mathrm{d}\xi\Big)^{\frac{1}{2}}\nonumber\\
\leq&\, C\Big(\int_{|\xi|\leq r_{0}}|\xi|^{2|\alpha|}{e}^{-C|\xi|^{2}t}\big|(\widehat{{\rho}},\widehat{{d}},\widehat{{\theta}},\widehat{{\eta}})(\xi,0)\big|^{2}\mathrm{d}\xi\Big)^{\frac{1}{2}}.    
\end{align*}
This  together with  H\"{o}lder’s inequality and the Huasdorff-Young inequality
yields
\begin{align}\label{G4.3}
\|\partial_{x}^{\alpha}(\overline{\rho}^l,\overline{d}^l,\overline{\theta}^l,\overline{\eta}^l)(t)\|_{L^{2}}\leq&\, C\|(\widehat{\overline{\rho}},\widehat{\overline{d}},\widehat{\overline{\theta}},\widehat{\overline{\eta}})(0)\|_{L_{\xi}^{q}}(1+t)^{-\frac{3}{2}(\frac{1}{2}-\frac{1}{q})-\frac{|\alpha|}{2}}\nonumber\\
\leq&\, C\|({\rho},{u},{\theta},{\eta})(0)\|_{L^{p}}(1+t)^{-\frac{3}{2}(\frac{1}{p}-\frac{1}{2})-\frac{|\alpha|}{2}}.    
\end{align}
Here, $1\leq p\leq2$ and $\frac{1}{p}+\frac{1}{q}=1$.
Similar to \eqref{G4.3}, it follows from \eqref{G2.2} and \eqref{G5.22} that
\begin{align}\label{G4.4}
\|\partial_x^\alpha(\mathcal{P}\overline{u})^l(t)\|_{L^2}\leq C\|{u}(0)\|_{L^p}(1+t)^{-\frac{3}{2}(\frac{1}{p}-\frac{1}{2})-\frac{|\alpha|}{2}}.   
\end{align}
Therefore, we directly get \eqref{G4.1} by combining \eqref{G4.3} and \eqref{G4.4}.
\end{proof}

\begin{lem}\label{L4.2}
For any integer $k\geq 0$, it holds that
\begin{align}\label{G4.5}
\|\nabla^{k}\mathbb{U}^{L}(t)\|_{L^{2}}\leq&\, C_{7}(1+t)^{-\frac{3}{4}-\frac{k}{2}}\|\mathbb{U}(0)\|_{L^{1}}+C_{7}e^{-Ct}\|\mathbb{U}(0)\|_{L^{2}}\nonumber\\
&\,+C_{7}\int_{0}^{\frac{t}{2}}(1+t-\tau)^{-\frac{3}{4}-\frac{k}{2}}\|\mathcal{N}(\mathbb{U})(\tau)\|_{L^{1}}\mathrm{d}\tau\nonumber\\
&\,+C_{7}\int_{\frac{t}{2}}^{t}(1+t-\tau)^{-\frac{k}{2}}\|\mathcal{N}(\mathbb{U})(\tau)\|_{L^{2}}\mathrm{d}\tau\nonumber\\
&+C_{7}\int_{0}^{t}e^{-C(t-\tau)}\|\mathcal{N}(\mathbb{U})(\tau)\|_{L^{2}}\mathrm{d}\tau, 
\end{align}
for a positive constant $C_7$.
\end{lem}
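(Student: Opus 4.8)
The plan is to feed Duhamel's formula \eqref{G3.91} into the two building blocks already established: the low-frequency decay of Lemma \ref{L4.1} and the exponential decay of the medium-frequency semigroup of Lemma \ref{L3.9}. First I would apply the low-medium frequency multiplier $\phi_0(D_x)+\big(1-\phi_0(D_x)-\phi_1(D_x)\big)$, which commutes with $\mathrm{A}(t)$, to \eqref{G3.91}, so that
\[
\mathbb{U}^{L}(t)=\mathrm{A}(t)\mathbb{U}^{L}(0)+\int_{0}^{t}\mathrm{A}(t-\tau)\mathcal{N}^{L}(\mathbb{U})(\tau)\,\mathrm{d}\tau .
\]
Writing $\mathbb{U}^{L}=\mathbb{U}^{l}+\mathbb{U}^{m}$ and $\mathcal{N}^{L}(\mathbb{U})=\mathcal{N}^{l}(\mathbb{U})+\mathcal{N}^{m}(\mathbb{U})$, the right-hand side breaks into a linear part and a Duhamel part, each of which splits further into a genuinely low-frequency piece and a medium-frequency piece.

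For the linear part I would bound $\|\nabla^{k}\mathrm{A}(t)\mathbb{U}^{l}(0)\|_{L^{2}}$ by Lemma \ref{L4.1} with $p=1$, which produces the term $C(1+t)^{-3/4-k/2}\|\mathbb{U}(0)\|_{L^{1}}$, and $\|\nabla^{k}\mathrm{A}(t)\mathbb{U}^{m}(0)\|_{L^{2}}$ by Lemma \ref{L3.9}, which produces $Ce^{-Ct}\|\mathbb{U}(0)\|_{L^{2}}$. For the medium-frequency part of the Duhamel integral, applying Lemma \ref{L3.9} with ``initial datum'' $\mathcal{N}(\mathbb{U})(\tau)$ and elapsed time $t-\tau$ under the integral sign gives directly the term $C\int_{0}^{t}e^{-C(t-\tau)}\|\mathcal{N}(\mathbb{U})(\tau)\|_{L^{2}}\,\mathrm{d}\tau$.

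The delicate term is the low-frequency part of the Duhamel integral, namely $\int_{0}^{t}\nabla^{k}\mathrm{A}(t-\tau)\mathcal{N}^{l}(\mathbb{U})(\tau)\,\mathrm{d}\tau$, and I would split the time integral at $\tau=t/2$. On $[0,t/2]$ the elapsed time $t-\tau$ stays large, so I apply Lemma \ref{L4.1} with $p=1$ (treating $\mathcal{N}(\mathbb{U})(\tau)$ as the datum), keeping the factor $(1+t-\tau)^{-3/4-k/2}$ against $\|\mathcal{N}(\mathbb{U})(\tau)\|_{L^{1}}$; on $[t/2,t]$ the elapsed time may be small, so I instead use Lemma \ref{L4.1} with $p=2$, which gives no decay from the $L^{2}$-to-$L^{2}$ low-frequency estimate but retains the integrable factor $(1+t-\tau)^{-k/2}$ against $\|\mathcal{N}(\mathbb{U})(\tau)\|_{L^{2}}$. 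Collecting the five contributions yields exactly \eqref{G4.5}. There is no genuine obstacle here beyond careful bookkeeping; the point worth emphasizing is the choice of the interpolation exponent $p$ differently in the two time regimes — so that on the later interval the nonlinearity is measured in $L^{2}$, where the energy estimates of Section 3 provide control — which is the standard device that renders the Duhamel contribution integrable in time.
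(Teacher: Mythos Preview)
Your argument is correct and is exactly the approach the paper has in mind: it states only that \eqref{G4.5} follows from Lemma~\ref{L3.9}, Lemma~\ref{L4.1} and the Duhamel formula \eqref{G3.91}, omitting the details, while you have spelled out precisely those details---the $l/m$ splitting, the use of $p=1$ versus $p=2$ in Lemma~\ref{L4.1} on the two halves of $[0,t]$, and the exponential bound from Lemma~\ref{L3.9} on the medium-frequency pieces.
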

\begin{proof}
By utilizing Lemma \ref{L3.9}, Lemma \ref{L4.1} and \eqref{G3.91}, 
\eqref{G4.5} can be easily gotten. For simplicity, we omit the details here.
\end{proof}

\subsection{Time-decay rates of strong solutions} 
In this subsection, thanks
to Lemmas \ref{L4.1}--\ref{L4.2}, we can  get the time-decay rates
of $(\rho, u, \theta, \eta)$.

\begin{lem}\label{L4.3}
For the strong solutions to the problem \eqref{I-2}--\eqref{I--2}, it holds that
\begin{align}\label{G4.6}
\|\nabla^m({\rho},{u},\theta,\eta)(t)\|_{L^2}\leq C(1+t)^{-\frac{3}{4}-\frac{m}{2}},    
\end{align}
for $m=0,1,2$.
\end{lem}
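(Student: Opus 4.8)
The plan is to combine the nonlinear energy inequality for the highest-order derivatives (Proposition \ref{P3.4}, via the functional $\mathcal{H}(t)$, which is equivalent to $\|\nabla^2(\rho,u,\theta,\eta)\|_{L^2}^2$) with the mild-formula frequency estimates in Lemmas \ref{L4.1}--\ref{L4.2}, and then close the argument by a bootstrap/induction on the time-decay exponent. First I would derive, from Proposition \ref{P3.4} and Plancherel's theorem applied to the low-frequency terms, a differential inequality of the form
\begin{align*}
\frac{\mathrm{d}}{\mathrm{d}t}\mathcal{H}(t)+c_0\,\mathcal{H}(t)\leq C\big(\|\nabla^2(u,\theta)^L(t)\|_{L^2}^2+\|\nabla^2\rho^L(t)\|_{L^2}^2+\|\nabla^2\eta^h(t)\|_{L^2}^2\big),
\end{align*}
using that on the high-frequency set $|\xi|\geq R_0$ one has $\|\nabla^2 f^h\|_{L^2}\sim\|\nabla^3 f^h\|_{L^2}$ and on the medium/low set the dissipation terms control $\mathcal{H}$ up to the frequency-localized forcing (this is essentially inequality \eqref{G4.16} referred to in the introduction). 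This reduces matters to estimating the low-medium frequency parts of $(\rho,u,\theta,\eta)$ in $L^2$, which is precisely what Lemma \ref{L4.2} provides through Duhamel's formula \eqref{G3.91}.

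Next I would run the standard decay bootstrap. Define $M(t):=\sup_{0\leq\tau\leq t}\sum_{m=0}^{2}(1+\tau)^{\frac34+\frac m2}\|\nabla^m(\rho,u,\theta,\eta)(\tau)\|_{L^2}$ and assume a priori that $M(t)\leq C$ (to be closed). Using Lemma \ref{L2.2} and the global bound from Theorem \ref{T1.1}, the nonlinear terms satisfy $\|\mathcal{N}(\mathbb{U})(\tau)\|_{L^1}\leq C\|(\rho,u,\theta,\eta)\|_{H^1}\|\nabla(\rho,u,\theta,\eta)\|_{H^1}$ and $\|\mathcal{N}(\mathbb{U})(\tau)\|_{L^2}\leq C\|\nabla(\rho,u,\theta,\eta)\|_{H^1}\|\nabla(\rho,u,\theta,\eta)\|_{H^1}$, which in terms of $M(t)$ decay like $(1+\tau)^{-\frac32}$ (for the $L^1$ bound, quadratic in the $L^2$ decay of first derivatives, i.e.\ $(1+\tau)^{-\frac54-\frac54}$ — actually $(1+\tau)^{-\frac74}$ suffices) and $(1+\tau)^{-\frac74}$ respectively. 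Feeding these into the Duhamel estimate \eqref{G4.5} and invoking Lemma \ref{L2.3} to handle the time-convolution integrals, one obtains $\|\nabla^k\mathbb{U}^L(t)\|_{L^2}\leq C(1+t)^{-\frac34-\frac k2}$ for $k=0,1,2$. Combining the $k=2$ low-medium bound with the high-frequency energy decay coming from Gronwall applied to the inequality above (the source terms on its right-hand side are now known to decay fast enough), one closes $\|\nabla^2\mathbb{U}(t)\|_{L^2}\leq C(1+t)^{-\frac74}$; interpolating (Lemma \ref{L2.4} / Sobolev interpolation) between the $L^2$ bounds at $k=0$ and $k=2$ then recovers the $k=1$ rate, and altogether $M(t)\leq C$ is closed for $\delta$ small.

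The main obstacle is the treatment of the highest-order derivative $\nabla^2$: the classical energy estimate alone does not yield a closed decay rate because of the bad linear term $\nabla\eta$ in \eqref{I-2}$_2$ (this is exactly the difficulty flagged after \eqref{G3.51}), so the decay of $\|\nabla^2\mathbb{U}\|_{L^2}$ cannot be obtained by pure energy arguments and must instead be extracted by splitting into frequencies — using the spectral/semigroup decay of the linearized system for the low-medium part and the exponentially damped energy functional $\mathcal{H}(t)$ for the high part — and showing the two pieces are mutually consistent. The delicate point within this is verifying that the frequency-localized forcing terms $\|\nabla^2\rho^L\|_{L^2}^2$ and $\|\nabla^2\eta^h\|_{L^2}^2$ appearing on the right-hand side of \eqref{G3.49} are themselves $o\big((1+t)^{-\frac72}\big)$, which requires already knowing the $k=1$ low-frequency rate — hence the bootstrap must be organized carefully so that the estimates feed into one another in the right order (first $k=0,1$ low-medium rates, then the $k=2$ high-frequency energy decay, then interpolation).
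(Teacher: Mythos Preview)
Your overall architecture---Gronwall on the high-frequency functional $\mathcal{H}(t)$ to produce the analogue of \eqref{G4.16}, Duhamel for the low--medium part via Lemma~\ref{L4.2}, and a bootstrap on $M(t)=\sup_\tau\sum_m(1+\tau)^{3/4+m/2}\|\nabla^m\mathbb{U}\|_{L^2}$---is exactly the paper's route. But there is a genuine gap in how you estimate the nonlinearity and close the bootstrap.

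Your claimed bound $\|\mathcal{N}(\mathbb{U})\|_{L^1}\le C\|(\rho,u,\theta,\eta)\|_{H^1}\|\nabla(\rho,u,\theta,\eta)\|_{H^1}$ is false: the terms $6\theta^2$ in $\mathcal{N}_4$ and $g(\rho)(\eta-4\theta)$ in $\mathcal{N}_3$ contain no derivatives, so their $L^1$ norms are only controlled by $\|\theta\|_{L^2}^2$ and $\|\rho\|_{L^2}\|(\theta,\eta)\|_{L^2}$. This forces $\|\mathcal{N}\|_{L^1}\le CM(t)^2(1+\tau)^{-3/2}$, with \emph{no} $\delta$ in front. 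Feeding that into the Duhamel integral still gives the right $(1+t)^{-3/4-m/2}$ rate, but the resulting bootstrap inequality is $M(t)\le C_0+CM(t)^2$ with $C_0\sim\|\mathbb{U}(0)\|_{L^1\cap H^2}$. Since Theorem~\ref{T1.2} only assumes the $L^1$ norm is \emph{bounded}, not small, the continuity argument does not close: you cannot guarantee $4CC_0<1$. (The alternative---replacing one factor by $\delta$ via the global $H^2$ bound---leaves the other factor decaying only like $(1+\tau)^{-3/4}$, which is not integrable and destroys the $m=0$ rate.)

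The paper resolves this by interpolating between the a priori smallness $\|\theta\|_{L^2}\le\delta$ and the decay bound $\|\theta\|_{L^2}\le M(t)(1+\tau)^{-3/4}$: for a fixed $0<\sigma<\tfrac13$ one writes $\|\theta\|_{L^2}^2\le \delta^{2/3-\sigma}M(t)^{4/3+\sigma}(1+\tau)^{-1-3\sigma/4}$, which is simultaneously integrable in $\tau$ \emph{and} carries a small $\delta$-power. This yields $M(t)\le C_0+C\delta^{2/3-\sigma}M(t)^{4/3+\sigma}+C\delta^{1/3-\sigma}M(t)^{5/3+\sigma}$, hence (after Young) $M(t)\le C_0+C_\delta M(t)^2$ with $C_\delta\to0$, and now the continuity argument closes regardless of the size of $C_0$. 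You need to insert this interpolation step; without it the bootstrap does not close under the stated hypotheses.
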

\begin{proof}
Denote
\begin{align}\label{G4.7}
\mathcal{M}(t):=\sup\limits_{0\leq \tau\leq t}\sum\limits_{m=0}^{2}(1+\tau)^{\frac{3}{4}+\frac{m}{2}}\|\nabla^{m}({\rho},u,\theta,\eta)(\tau)\|_{L^{2}}.   
\end{align} 
From the definition of \eqref{G4.7}, it is obvious that
\begin{align}\label{G4.8}
\|\nabla^m({\rho},{u},\theta,\eta)(\tau)\|_{L^2}\leq C_8(1+\tau)^{-\frac{3}{4}-\frac{m}{2}}\mathcal{M}(t),   
\end{align}
for $m=0,1,2$ and $0\leq \tau \leq t$,
where the constant $C_8>0$ is independent of $\delta$. 
By using \eqref{G4.8}, H\"{o}lder’s inequality and Lemmas \ref{L2.1}--\ref{L2.2}, we get
\begin{align}
\|\mathcal{N}(\mathbb{U})(\tau)\|_{L^{1}}\leq&\, C\|({\rho},{u},\theta)\|_{L^{2}}\|\nabla({\rho},{u},\eta)\|_{L^{2}}+C\|{\rho}\|_{L^{2}}\|\nabla^{2}(u,\theta)\|_{L^{2}}\nonumber\\
&\,+C\|\nabla u\|_{L^{2}}^2+C\|{\rho}\|_{L^{2}}\|(\theta,\eta)\|_{L^{2}}+C\|\theta\|_{L^2}^2\Big(1+\|\theta\|_{L^{\infty}}+\|\theta\|_{L^{\infty}}^2\Big)\nonumber\\
\leq&\, C\delta^{\frac{2}{3}-\sigma}\mathcal{M}^{\frac{4}{3}+\sigma}(t)(1+\tau)^{-1-\frac{3}{4}\sigma}, 
 \label{G4.9}\\
 \|\mathcal{N}(\mathbb{U})(\tau)\|_{L^{2}}\leq&\, C\|({\rho},{u},\theta)\|_{L^{\infty}}\|\nabla({\rho},{u},\eta)\|_{L^{2}}+C\|{\rho}\|_{L^{\infty}}\|\nabla^{2}(u,\theta)\|_{L^{2}}+C\|\nabla u\|_{H^{1}}\|\nabla^2 u\|_{L^{2}}\nonumber\\
&\,+C\|{\rho}\|_{L^{\infty}}\|\nabla(u,\theta)\|_{L^{2}}+C\|\theta\|_{L^2}\|\nabla\theta\|_{L^2}\Big(1+\|\theta\|_{L^{\infty}}+\|\theta\|_{L^{\infty}}^2\Big)\nonumber\\
\leq\,& C\delta^{\frac{1}{3}-\sigma}
\mathcal{M}^{\frac{5}{3}+\sigma}(t)(1+\tau)^{-\frac{7}{4}-\frac{3}{4}\sigma},   \label{G4.10}
\end{align}
where $0<\sigma<\frac{1}{3}$ is a fixed constant.
By using \eqref{G4.9}--\eqref{G4.10}, Lemma \ref{L2.3} and Lemma \ref{L4.2}, we have
\begin{align}\label{G4.11}
\|\nabla^{m}\mathbb{U}^{L}(t)\|_{L^{2}}\leq&\, C_{7}(1+t)^{-\frac{3}{4}-\frac{m}{2}}\|\mathbb{U}(0)\|_{L^{1}}+C_{7}{e}^{-Ct}\|\mathbb{U}(0)\|_{L^{2}}\nonumber\\
&\,+C\delta^{\frac{2}{3}-\sigma}\mathcal{M}^{\frac{4}{3}+\sigma}(t)\int_{0}^{\frac{1}{2}}(1+t-\tau)^{-\frac{3}{4}-\frac{m}{2}}(1+\tau)^{-1-\frac{3}{4}\sigma}\mathrm{d}\tau\nonumber\\
&\,+C\delta^{\frac{1}{3}-\sigma}\mathcal{M}^{\frac{5}{3}+\sigma}(t)\int_{\frac{t}{2}}^{t}(1+t-\tau)^{-\frac{m}{2}}(1+\tau)^{-\frac{7}{4}-\frac{3}{4}\sigma}\mathrm{d}\tau\nonumber\\
&\,+C\delta^{\frac{1}{3}-\sigma}\mathcal{M}^{\frac{5}{3}+\sigma}(t)\int_{0}^{t}{e}^{-C(t-\tau)}(1+\tau)^{-\frac{7}{4}-\frac{3}{4}\sigma}\mathrm{d}\tau\nonumber\\
\leq&\, C_{7}(1+t)^{-\frac{3}{4}-\frac{m}{2}}\|\mathbb{U}(0)\|_{L^{1}}+C_{7}{e}^{-Ct}\|\mathbb{U}(0)\|_{L^{2}}\nonumber\\
&\,+C\Big(\delta^{\frac{2}{3}-\sigma}\mathcal{M}^{\frac{4}{3}+\sigma}(t)+C\delta^{\frac{1}{3}-\sigma}M^{\frac{5}{3}+\sigma}(t)\Big)(1+t)^{-\frac{3}{4}-\frac{m}{2}},    
\end{align}
for $m=0,1,2$.
In particular, taking $m=2$ in \eqref{G4.11} gives
\begin{align}\label{G4.12}
\|\nabla^{2}\mathbb{U}^{L}(t)\|_{L^{2}}&\leq C_{8}(1+t)^{-\frac{7}{4}}\|\mathbb{U}(0)\|_{L^{1}\cap L^{2}}\nonumber\\
&+C_{8}\Big(\delta^{\frac{2}{3}-\sigma}\mathcal{M}^{\frac{4}{3}+\sigma}(t)+\delta^{\frac{1}{3}-\sigma}\mathcal{M}(t)^{\frac{5}{3}+\sigma}\Big)(1+t)^{-\frac{7}{4}},   
\end{align}
for some constant $C_8>0$, which is independent of $\delta$.
Then, it follows from Proposition \ref{P3.4} and Lemma \ref{L2.5} that
\begin{align*}
&\frac{\mathrm{d}}{\mathrm{d}t}\mathcal{H}(t)+\lambda_3\Big(\|\nabla^{2}(\theta,\eta,\rho)(t)\|_{L^{2}}^{2}+\|\nabla^{2}\mathrm{div}u(t)\|_{L^{2}}^{2}  \Big)\nonumber\\
&+\lambda_3\Big(\frac{1}{2}\|\nabla^{3}(u,\theta,\eta)(t)\|_{L^{2}}^{2}+\frac{1}{2} R_0^2\|\nabla^{2}(u^h,\theta^h,\eta^h)(t)\|_{L^{2}}^{2} \Big) \nonumber\\
&\,\quad \leq C_1\Big(\|\nabla^{2}(u,\theta)(t)\|_{L^{2}}^{2}+\|\nabla^2\rho^L(t)\|_{L^2}^2+\|\nabla^2\eta^h(t)\|_{L^2}^2\Big),
\end{align*}
this together with \eqref{G2.3} yields
\begin{align*}
&\frac{\mathrm{d}}{\mathrm{d}t}\mathcal{H}(t)+\lambda_3\Big(\|\nabla^{2}(\theta,\eta,\rho)(t)\|_{L^{2}}^{2}+\|\nabla^{2}\mathrm{div}u(t)\|_{L^{2}}^{2}  \Big)\nonumber\\
&+\lambda_3\Big(\frac{1}{2}\|\nabla^{3}(u,\theta,\eta)(t)\|_{L^{2}}^{2}+\frac{1}{4} R_0^2\|\nabla^{2}(u^h,\theta^h,\eta^h)(t)\|_{L^{2}}^{2} \Big) \nonumber\\
&\,\quad \leq C_1\|\nabla^{2}(\rho^L,u^L,\theta^L)(t)\|_{L^{2}}^{2}.
\end{align*}
Adding $\frac{1}{4}R_0^2\lambda_3\|\nabla^{2}(u^L,\theta^L,\eta^L)(t)\|_{L^{2}}^{2}$
on the both sides of \eqref{G3.14}, we have
\begin{align*}
\frac{\mathrm{d}}{\mathrm{d}t}\mathcal{H}(t)+C_{9}\mathcal{H}(t)\leq \Big({C}_{1}+\frac{1}{4}R_0^2\lambda_3\Big)\|\nabla^{2}(\rho^L,u^L,\theta^L)(t)\|_{L^{2}}^{2},    
\end{align*}
for some positive constant $C_9$.
By using Gronwall's inequality, we arrive at
\begin{align}\label{G4.16}
\mathcal{H}(t)\leq\mathrm e^{-C_9t}\mathcal{H}(0)+C\int_0^t e^{-C_9(t-\tau)}\|\nabla^{2}(\rho^L,u^L,\theta^L)(\tau)\|_{L^2}^2\mathrm d\tau.    
\end{align}
Substituting \eqref{G4.12} into \eqref{G4.16}, we infer that
\begin{align}\label{G4.17}
\mathcal{H}(t)\leq&\,{e}^{-C_{9}t}\mathcal{H}(0)+C\|\mathbb{U}(0)\|_{L^{1}\cap L^{2}}^{2}\int_{0}^{t}{e}^{-C_{9}(t-\tau)}(1+\tau)^{-\frac{7}{2}}\mathrm{d}\tau\nonumber\\
&\,+C\Big(\delta^{\frac{4}{3}-2\sigma}\mathcal{M}^{\frac{8}{3}+2\sigma}(t)+\delta^{\frac{2}{3}-2\sigma}\mathcal{M}^{\frac{10}{3}+2\sigma}(t)\Big)\int_{0}^{t}{e}^{-C_{9}(t-\tau)}(1+\tau)^{-\frac{7}{2}}\mathrm{d}\tau\nonumber\\
\leq&\,{e}^{-C_{9}t}\mathcal{H}(0)+C\|\mathbb{U}(0)\|_{L^{1}\cap L^{2}}^{2}(1+t)^{-\frac{7}{2}}\nonumber\\
&\,+C\Big(\delta^{\frac{4}{3}-2\sigma}\mathcal{M}^{\frac{8}{3}+2\sigma}(t)+\delta^{\frac{2}{3}-2\sigma}\mathcal{M}^{\frac{10}{3}+2\sigma}(t)\Big)(1+t)^{-\frac{7}{2}}.   
\end{align}
With the help of Lemma \ref{L2.5} and \eqref{G3.50}, it's clear that
\begin{align}\label{G4.18}
\|\nabla^{m}\mathbb{U}(t)\|_{L^{2}}^{2}
\leq&\, C\|\nabla^{m}\mathbb{U}^{L}(t)\|_{L^{2}}^{2}+C\|\nabla^{m}\mathbb{U}^{h}(t)\|_{L^{2}}^{2}\nonumber\\
\leq&\, C\|\nabla^{m}\mathbb{U}^{L}(t)\|_{L^{2}}^{2}+C\|\nabla^{2}\mathbb{U}(t)\|_{L^{2}}^{2}\nonumber\\
\leq&\, C\|\nabla^{m}\mathbb{U}^{L}(t)\|_{L^{2}}^{2}+C\mathcal{H}(t),    
\end{align}
for $m=0,1,2$.
Combining \eqref{G4.11} with \eqref{G4.17}--\eqref{G4.18}, we end up with
\begin{align*}
\|\nabla^{m}\mathbb{U}(t)\|_{L^{2}}^{2}\leq&\,{e}^{-C_{9}t}\mathcal{H}(0)+C\|\mathbb{U}(0)\|_{L^{1}\cap L^{2}}^{2}(1+t)^{-\frac{3}{2}-m}\nonumber\\
&+C\Big(\delta^{\frac{4}{3}-2\sigma}\mathcal{M}^{\frac{8}{3}+2\sigma}(t)+\delta^{\frac{2}{3}-2\sigma}\mathcal{M}^{\frac{10}{3}+2\sigma}(t)\Big)(1+t)^{-\frac{3}{2}-m}.    
\end{align*}
Then, by using the definition of $\mathcal{M}(t)$ and the smallness of $\delta$,
we have
\begin{align}\label{G4.20}
\mathcal{M}(t)\leq C_{10}\Big(\|({\rho},{u},\theta,\eta)(0)\|_{L^1\cap H^2}+\delta^{\frac{2}{3}-\sigma}\mathcal{M}^{\frac{4}{3}+\sigma}(t)+\delta^{\frac{1}{3}-\sigma}\mathcal{M}^{\frac{5}{3}+\sigma}(t)\Big)    
\end{align}
for some constant $C_{10}>0$, which is independent of $\delta$.
Thanks to Young's inequality, \eqref{G4.20} can be rewritten as
\begin{align}\label{G4.21}
\mathcal{M}(t)\leq C_0+C_\delta \mathcal{M}^2(t),    
\end{align}
where
\begin{align*}
C_0:=C_{10}\|({\rho},{u},\theta,\eta)(0)\|_{L^1\cap H^2}+\frac{2-3\sigma}{6}C_{10}^{\frac{6}{2-3\sigma}}+\frac{1-3\sigma}{6}C_{10}^{\frac{6}{1-3\sigma}},
\end{align*}
and
\begin{align*}
C_\delta:=\frac{4+3\sigma}{6}\delta^{\frac{4-6\sigma}{4+3\sigma}}+\frac{5+3\sigma}{6}\delta^{\frac{2-6\sigma}{5+3\sigma}}.    
\end{align*}

Now, we claim that $\mathcal{M}(t)\leq C$. Assume that there exists a constant $t^*>0$ such that  $\mathcal{M}(t^*)>2C_0$.
Noticing that $\mathcal{M}(0)=\|(\rho,u,\theta,\eta)\|_{H^2}$ is sufficiently small
and $\mathcal{M}(t)\in C^0([0,+\infty))$, thus there exists $t_1\in(0,t^*)$ such
that $\mathcal{M}(t_1)=2C_0$.

According to the \eqref{G4.21}, it yields
\begin{align*}
\mathcal{M}(t_1)\leq C_0+C_\delta \mathcal{M}^2(t_1),  
\end{align*}
which means
\begin{align}\label{G4.25}
\mathcal{M}(t_1)\leq \frac{C_0}{1-C_\delta \mathcal{M}(t_1)}.    
\end{align}
Since $\delta$ is a small positive constant, we can choose it satisfying $C_{\delta}<\frac{1}{4C_{0}}$,
this together with \eqref{G4.25} yields
\begin{align*}
 \mathcal{M}(t_{1})<2C_{0},  
\end{align*}
which leads to a contradiction with $\mathcal{M}(t_1)=2C_0$.
As a result, $\mathcal{M}(t)\leq 2C_0$ for all $t\in[t^*,\infty)$.
Thanks to the continuity of $\mathcal{M}(t)$, we get
$\mathcal{M}(t)\leq C$ for all $t\in[0,+\infty)$. Using
the definition of $\mathcal{M}(t)$, we obtain the inequality \eqref{G4.6}.
\end{proof}

\subsection{Proof of Theorem \ref{T1.2}}
\begin{proof}[Proof of Theorem \ref{T1.2}]
With the help of Lemma \ref{L4.3}, we   continue to prove Theorem \ref{T1.2}.
By a directly calculation, we derive that
\begin{align*}
\|\partial_{t}\rho(t)\|_{L^{2}}\leq&\, C\|\mathrm{div}u(t)\|_{L^{2}}+C\|\mathcal{N}_{1}(t)\|_{L^{2}}\nonumber\\
\leq&\,C\|\nabla u(t)\|_{L^{2}}+\|\rho(t)\|_{L^{\infty}}\|\nabla u(t)\|_{L^{2}}+\|u(t)\|_{L^{\infty}}\|\nabla \rho(t)\|_{L^{2}}\nonumber\\
\leq&\,C(1+t)^{-\frac{5}{4}}.    
\end{align*}
Similarly, it holds 
\begin{align*}
\|\partial_{t} u(t)\|_{L^{2}}\leq&\, C\|\nabla \rho(t)\|_{L^{2}}+C\|\nabla (\theta,\eta)(t)\|_{L^{2}}+C\|\Delta u(t)\|_{L^2}+C\|{\rm div} u(t)\|_{L^2}+C\|\mathcal{N}_{2}(t)\|_{L^{2}}\nonumber\\
\leq&\,C(1+t)^{-\frac{5}{4}},    
\end{align*}
and
\begin{align*}
\|\partial_{t}(\theta,\eta)(t)\|_{L^{2}}\leq&\,C\|\Delta(\theta,\eta)(t)\|_{L^{2}}+C\|(\theta,\eta)(t)\|_{L^{2}}+\|{\rm div}u(t)\|_{L^{2}}+C\|(\mathcal{N}_3,\mathcal{N}_4)(t)\|_{L^{2}}\nonumber\\
\leq&\,C(1+t)^{-\frac{3}{4}}.   
\end{align*}
Finally,
   we give the
proofs of \eqref{G1.9}--\eqref{G1.10}. 
From \eqref{G4.6} and Lemma \ref{L2.1}, we have
\begin{align}\label{G4.30}
&\|(\rho,u,\theta,\eta)\|_{L^6}\leq C\|\nabla (\rho,u,\theta,\eta)\|_{L^2}\leq C(1+t)^{-\frac{5}{4}},\\\label{G4.31} 
&\|(\rho,u,\theta,\eta)\|_{L^2}\leq C(1+t)^{-\frac{3}{4}},\\ \label{G4.32}
&\|\nabla(\rho,u,\theta,\eta)\|_{L^6}\leq C\|\Delta (\rho,u,\theta,\eta)\|_{L^2}\leq C(1+t)^{-\frac{7}{4}}.
\end{align}
For $p\in [2,6]$, combining Lemma \ref{L2.4} and \eqref{G4.30}-\eqref{G4.31} implies that
\begin{align*}
&\|(\rho,u,\theta,\eta)\|_{L^p}\leq C\|(\rho,u,\theta,\eta)\|_{L^2}^{\zeta}\|(\rho,u,\theta,\eta)\|_{L^6}^{{1-\zeta}}\leq C(1+t)^{-\frac{3}{2}(1-\frac{1}{p})},
\end{align*}
where $\zeta= ({6-p})/ {2p} \in [0,1]$.

From Lemma \ref{L2.1}, we get
\begin{align}\label{G4.34}
\|(\rho,u,\theta,\eta)\|_{L^{\infty}}\leq\,& C\|\nabla (\rho,u,\theta,\eta)\|_{L^{2}}^{\frac{1}{2}}\|\Delta (\rho,u,\theta,\eta)\|_{L^{2}}^{\frac{1}{2}} 
\end{align}
For $p\in [6,\infty]$, using  Lemma \ref{L2.4} again, it follows from \eqref{G4.30} and \eqref{G4.34} that
\begin{align*}
&\|(\rho,u,\theta,\eta)\|_{L^p}\leq C\|(\rho,u,\theta,\eta)\|_{L^6}^{\zeta^{\prime}}\|(\rho,u,\theta,\eta)\|_{L^{\infty}}^{{1-\zeta^{\prime}}}\leq C(1+t)^{-\frac{3}{2}(1-\frac{1}{p})},
\end{align*}
where $\zeta^{\prime}= {6}/{p} \in [0,1]$.
For $p\in [2,6]$, by using Lemma \ref{L2.1}, it follows from \eqref{G4.30} and
\eqref{G4.32} that
\begin{align*}
&\|\nabla(\rho,u,\theta,\eta)\|_{L^p}\leq C\|\nabla(\rho,u,\theta,\eta)\|_{L^2}^{\eta^{\prime}}\|\nabla(\rho,u,\theta,\eta)\|_{L^6}^{{1-\eta^{\prime}}}\leq C(1+t)^{-\frac{3}{2}(\frac{4}{3}-\frac{1}{p})},
\end{align*}
where $\eta^{\prime}= ({6-p})/ {2p} \in [0,1]$.
Thus, we complete the proof of Theorem \ref{T1.2}.
\end{proof}

\medskip
\appendix  
\setcounter{table}{0}   
\setcounter{figure}{0}
 \setcounter{section}{1}
\setcounter{equation}{0}
\renewcommand{\thetable}{A\arabic{table}}
\renewcommand{\thefigure}{A\arabic{figure}}
\renewcommand{\thesection}{A\arabic{section}}
\renewcommand{\theequation}{A\arabic{equation}}
\numberwithin{equation}{section}

\section*{Appendix. Low and  medium
frequency parts  of strong solutions to the  linearized system}
 
Consider the following linearized version of the problem \eqref{I-2}--\eqref{I--2}:
\begin{equation}\label{A-1}
\left\{
\begin{aligned}
& \partial_t \rho+{\rm div}u=0, \\
& \partial_t u+\nabla\rho+\nabla\theta-\Delta u-2\nabla {\rm div}u+\nabla\eta=0, \\
& \partial_t \theta+{{\rm div}u}-\Delta \theta+4\theta-\eta=0,\\
& \partial_t \eta-\Delta\eta+\eta-4\theta=0,
\end{aligned}\right.
\end{equation}
with the initial data
\begin{align}\label{A--1}
(\rho,u,\theta,\eta(x,t)|_{t=0}&=( \rho_{0}(x),u_{0}(x),\theta_{0}(x),\eta_0(x))\nonumber\\
&=(\varrho_{0}(x)-1,u_{0}(x),\Theta_{0}(x)-1, n_0(x)-1 ).
\end{align}

In this appendix, we establish the time-decay estimates of the low-medium
frequency parts of strong solutions to the   problem  \eqref{A-1}--\eqref{A--1}.
We will divide \eqref{A-1} into two parts \eqref{A-2} and \eqref{A-3} to
get the decay estimates of $(\rho,u,\theta,\eta)$. Our method is in the same spirit of  
\cite{DD-JEE-2014}.

\subsection{Analysis of low-frequency part}
Motivated by \cite{Dan-00}, we apply the so-called ``Hodge decomposition"
to the velocity field. Setting $d=\Lambda^{-1}{\rm div}u$ and $\mathcal{P}u=\Lambda^{-1}{\rm curl}u$ and using the identity $\Delta=\nabla{\rm div}-{\rm curl}{\rm curl}$, we have the following identity:
\begin{align}\label{G5.3}
{u}=-\Lambda^{-1}\nabla d-\Lambda^{-1}{\rm curl}(\mathcal{P}{u}),   
\end{align}
where ${\rm curl}_{ij}=\partial_{x_j}u^i-\partial_{x_i}u^j$ and $\mathcal{P}$ is
a projection operator.
It is easy to see that $(\rho,u,\theta,\eta)$ and $\mathcal{P}u$ satisfy
\begin{equation}\label{A-2}
\left\{
\begin{aligned}
& \partial_t \rho+\Lambda d=0, \\
& \partial_t d-\Lambda\rho-\Lambda\theta-3\Delta d-\Lambda\eta=0, \\
& \partial_t \theta+\Lambda d-\Delta \theta+4\theta-\eta=0,\\
& \partial_t \eta-\Delta\eta+\eta-4\theta=0,
\end{aligned}\right.
\end{equation}
and
\begin{align}\label{A-3}
\partial_t(\mathcal{P}u)-\Delta(\mathcal{P}u)=0.    
\end{align}

By performing the Fourier transform on \eqref{A-2} and \eqref{A-3}, we have
\begin{equation}\label{G5.6}
\left\{
\begin{aligned}
& \partial_t \widehat{\rho}+|\xi| \widehat{d}=0, \\
& \partial_t \widehat{d}-|\xi|\widehat{\rho}-|\xi|\widehat{\theta}+3|\xi|^2\widehat{d}-|\xi|\widehat{\eta}=0, \\
& \partial_t \widehat{\theta}+|\xi|\widehat{d}+|\xi|^2\widehat{\theta} +4\widehat{\theta}-\widehat{\eta}=0,\\
& \partial_t \widehat{\eta}+|\xi|^2\widehat{\eta}+\widehat{\eta}-4\widehat{\theta}=0,
\end{aligned}\right.
\end{equation}
and
\begin{align}\label{G5.7}
\partial_t(\widehat{\mathcal{P}u})+|\xi|^2(\widehat{\mathcal{P}u})=0.    
\end{align}
Similar to \eqref{G3.52}, we consider the following variable change:
\begin{align*} \left(
\begin{matrix}
\widehat{\rho}  \\
\widehat{u}   \\
\widehat{g}  \\
\widehat{f}  \\
\end{matrix}\right):=\mathbb{Q}\left(
\begin{matrix}
\widehat{\rho}  \\
\widehat{u}   \\
\widehat{\theta}  \\
\widehat{\eta} \\
\end{matrix}\right),
\end{align*}
where
\begin{align*}
\mathbb{Q}=\left(
\begin{matrix}
1 & 0 & 0 & 0   \\
0 & 1 & 0 & 0    \\
0 & 0 & 4 & -1   \\
0 & 0 & 1 & 1  \\
\end{matrix}\right).
\end{align*}
Hence, \eqref{G5.6} can be expressed as
\begin{equation}\label{G5.10}
\left\{
\begin{aligned}
& \partial_t \widehat{\rho}+|\xi| \widehat{d}=0, \\
& \partial_t \widehat{d}-|\xi|\widehat{\rho}+3|\xi|^2\widehat{d}-|\xi|\widehat{f}=0, \\
& \partial_t \widehat{g}+4|\xi|\widehat{d}+|\xi|^2\widehat{g} +5\widehat{g}=0,\\
& \partial_t \widehat{f}+|\xi|\widehat{d}+|\xi|^2\widehat{f}=0.
\end{aligned}\right.
\end{equation}
It follows from \eqref{G5.10} that
\begin{align}\label{G5.11}
\frac{1}{2}\frac{\mathrm{d}}{\mathrm{d}t}\Big(|\widehat{\rho}|^2+|\widehat{d}|^2   +|\widehat{f}|^2+|\widehat{g}|^2    \Big)+3|\xi|^2|\widehat{d}|^2+|\xi|^2|\widehat{f}|^2+|\xi|^2|\widehat{g}|^2+5|\widehat{g}|^2 =-4|\xi|\mathbf{Re}(\widehat{d}\,|{\widehat{\overline{g}}}).     
\end{align}
By Young's inequality, we have
\begin{align}\label{G5.12}
-4|\xi|\mathbf{Re}(\widehat{d}{\widehat{\overline{g}}})\leq |\xi|^2|\widehat{d}|^2+4|\widehat{g}|^2.    
\end{align}
Putting \eqref{G5.12} into \eqref{G5.11}, we obtain 
\begin{align}\label{G5.13}
\frac{1}{2}\frac{\mathrm{d}}{\mathrm{d}t}\Big(|\widehat{\rho}|^2+|\widehat{d}|^2   +|\widehat{f}|^2+|\widehat{g}|^2    \Big)+2|\xi|^2|\widehat{d}|^2+|\xi|^2|\widehat{f}|^2+|\xi|^2|\widehat{g}|^2+|\widehat{g}|^2 \leq 0.   
\end{align}
Next, we give the estimate of $|\xi|^2|\widehat{\rho}|^2$.
From \eqref{G5.10}$_1$--\eqref{G5.10}$_2$, we directly get
\begin{align}\label{G5.14}
\frac{\mathrm{d}}{\mathrm{d}t}\Big(-|\xi|\mathbf{Re}(\widehat{\rho}{\widehat{\overline{d}}})  \Big)+|\xi|^2|\widehat{\rho}|^2=&\,-|\xi|^2\mathbf{Re}(\widehat{\rho}{\widehat{\overline{f}}})+3|\xi|^3\mathbf{Re}(\widehat{\rho}{\widehat{\overline{d}}}) +|\xi|^2|\widehat{d}|^2 \nonumber\\
\leq &\,\frac{1}{2}|\xi|^2|\widehat{\rho}|^2+\frac{1}{2}|\xi|^2|\widehat{f}|^2
+\frac{1}{4}|\xi|^2|\widehat{\rho}|^2+9|\xi|^4|\widehat{d}|^2+|\xi|^2|\widehat{d}|^2.
\end{align}
Then, plugging \eqref{G5.14} into \eqref{G5.13}, it holds
\begin{align}\label{G5.15}
\frac{1}{2}\frac{\mathrm{d}}{\mathrm{d}t}\mathbf{L}(t,\xi)+\frac{1}{4}|\xi|^2|\widehat{\rho}|^2+(2|\xi|^2-9|\xi|^4)|\widehat{d}|^2+\frac{1}{2}|\xi|^2|\widehat{f}|^2+|\xi|^2|\widehat{g}|^2+|\widehat{g}|^2 \leq 0,   
\end{align}
where the Lyapunov functional $\mathbf{L}(t,\xi)$ is defined as 
\begin{align*}
\mathbf{L}(t,\xi)=\big(|\widehat{\rho}|^2+|\widehat{d}|^2   +|\widehat{f}|^2+|\widehat{g}|^2-|\xi|\mathbf{Re}(\widehat{\rho}{\widehat{\overline{d}}})    \big).   
\end{align*}
For $|\xi|<\frac{1}{4}$, it holds that
\begin{align}\label{G5.17}
\frac{1}{2} \mathbf{L}(t,\xi)\leq \big(|\widehat{\rho}|^2+|\widehat{d}|^2   +|\widehat{f}|^2+|\widehat{g}|^2    \big)\leq 2\mathbf{L}(t,\xi).   
\end{align}
Combining \eqref{G5.15} with \eqref{G5.17}, there exists a positive constant $C_{11}$ independent of $|\xi|$ such that
\begin{align*}
\frac{\mathrm{d}}{\mathrm{d}t}\mathbf{L}(t,\xi)+C_{11}|\xi|^2\mathbf{L}(t,\xi) \leq 0.      
\end{align*}
This together with \eqref{G5.17} and Gronwall's inequality yields
\begin{align*}
|(\widehat{\rho},\widehat{d},\widehat{f},\widehat{g})(t,\xi)|^2\leq C{e}^{-C_{11}|\xi|^2t}|(\widehat{\rho},\widehat{d},\widehat{f},\widehat{g})(0,\xi)|^2.
\end{align*}
Noticing  the relation between $(\widehat{\rho},\widehat{d},\widehat{f},\widehat{g})$
and $(\widehat{\rho},\widehat{d},\widehat{\theta},\widehat{\eta})$,
we arrive at
\begin{align}\label{G5.20}
|(\widehat{\overline{\rho}},\widehat{\overline{d}},\widehat{\overline{\theta}},\widehat{\overline{\eta}})(t,\xi)|^{2}\leq C{e}^{-C_{11}|\xi|^{2}t}|(\widehat{\rho},\widehat{d},\widehat{\theta},\widehat{\eta})(0,\xi)|^{2},   
\end{align}
for $|\xi|<\frac{1}{4}$.

For the estimate of $\widehat{\mathcal{P} \overline{u}}$, it follows from
\eqref{G5.7} that
\begin{align*}
\frac{1}{2}\frac{\mathrm{d}}{\mathrm{d}t}|\widehat{\mathcal{P}\overline{u}}(t)|^{2}+|\xi|^2|\widehat{\mathcal{P}\overline{u}}(t)|^{2}=0,    
\end{align*}
for any $|\xi|\geq 0$,
which implies that 
\begin{align}\label{G5.22}
|\widehat{\mathcal{P}\overline{u}}(t)|^{2}\leq {e}^{-|\xi|^2t}|\widehat{\mathcal{P}u}(0)|^{2},   
\end{align}
for any $|\xi|\geq 0$.
\subsection{Analysis of medium-frequency part} In this subsection,
 we give the estimates of strong solutions for the medium-frequency part.

Now, we rewrite \eqref{G5.6} as  
\begin{align*}
\partial_t \widehat{\mathbb{V}}=-\mathbb{J}(\xi)\widehat{\mathbb{V}},   
\end{align*}
where $\widehat{\mathbb{V}}:=(\widehat{\overline{\rho}},\widehat{\overline{d}},\widehat{\overline{\theta}},\widehat{\overline{\eta}})^{T}$ and
\begin{align*}
\mathbb{J}(\xi)=\left(
\begin{matrix}
0 & |\xi| & 0 & 0   \\
-|\xi| & 3|\xi|^2 & -|\xi| & -|\xi|    \\
0 & |\xi| & |\xi|^2+4 & -1   \\
0 & 0 & -4 & |\xi|^2+1  \\
\end{matrix}\right).
\end{align*}
Then, through Routh-Hurwitz theorem (see \cite{LB-1972}, p.459), we
conclude that all eigenvalues of matrix $\mathbb{J}(\xi)$ have
the positive real part if and only if the determinants $A_1$,
$A_2$, $A_3$, $A_4$ below are positive. By a direct calculation, we get the characteristic 
polynomial of $\mathbb{J}(\xi)$:
\begin{align*}
{\rm det}|\lambda\mathbb{I}-\mathbb{J}(\xi)|=&\lambda^4-(5|\xi|^2+5)\lambda^3+(7|\xi|^4+22|\xi|^2)\lambda^2\nonumber\\
&\,-(3|\xi|^6+18|\xi|^4+10|\xi|^2)\lambda+(|\xi|^6+5|\xi|^4).
\end{align*}

Similar to the method in Section 3.3 in \cite{DD-JEE-2014}, 
we denote $$h(\lambda):=a_0\lambda^4-a_1\lambda^3+a_2\lambda^2-a_3\lambda+a_4,$$ 
where
$$a_0=1 ,  a_1=5|\xi|^2+5 ,  a_2=7|\xi|^4+22|\xi|^2 , 
 a_3=3|\xi|^6+18|\xi|^4+10|\xi|^2 ,  a_4=|\xi|^6+5|\xi|^4.$$
Denote
\begin{align*}
A_1:= a_1,\ \
A_2:=\begin{vmatrix}
a_1 & a_0  \\
a_3  & a_2\\
\end{vmatrix},  \ \
A_3:=\begin{vmatrix}
a_1 & a_0 & 0 \\
a_3  & a_2 & a_1\\
0  & a_4  &a_3\\
\end{vmatrix}, \ \ 
A_4:=\begin{vmatrix}
a_1 & a_0 & 0 &0\\
a_3  & a_2 & a_1&0\\
0 & a_4  &a_3&0\\
0  & 0 &0  &a_4\\
\end{vmatrix}. 
\end{align*}
By tedious calculation, we directly have
\begin{align*}
A_1&=\,5|\xi|^2+5>0,\\
A_2&=\,32|\xi|^6+127|\xi|^4+100|\xi|^2>0,\\
A_3&=\,96|\xi|^{12}+932|\xi|^{10}+2731|\xi|^8+2795|\xi|^6+875|\xi|^4>0.
\end{align*}
In addition, from the fact that sgn$A_3$=sgn$A_4$, we derive that  $A_4>0$.

According to the analysis of medium-frequency part of solutions,
we directly have the following proposition.

\begin{prop}
\label{P5.1}
there exists constants $\kappa>0$, such that    
\begin{align}\label{G5.26}
|e^{-t\mathbb{J}(\xi)}|\leq Ce^{-\kappa t},    
\end{align}
for $r<|\xi|< R$ and $t\geq 0$. Here, $\kappa$ and $C$ are independent of $|\xi|$. 
\end{prop}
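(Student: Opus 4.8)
The plan is to deduce the uniform bound \eqref{G5.26} from the positivity of the Routh--Hurwitz determinants $A_1,A_2,A_3,A_4$ established above, combined with a compactness argument on the annulus $\Sigma:=\{\xi\in\mathbb{R}^3:\ r\le|\xi|\le R\}$.

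\textbf{Step 1 (Spectral gap, uniform in $\xi$).} Since $A_1,\dots,A_4>0$ for $r<|\xi|<R$, the Routh--Hurwitz criterion guarantees that every eigenvalue $\lambda_j(\xi)$ of $\mathbb{J}(\xi)$ satisfies $\mathrm{Re}\,\lambda_j(\xi)>0$. The $\lambda_j(\xi)$ are the roots of a monic quartic whose coefficients are polynomials in $|\xi|$, hence depend continuously on $\xi$; therefore $\xi\mapsto\min_{1\le j\le4}\mathrm{Re}\,\lambda_j(\xi)$ is continuous and strictly positive on the compact set $\Sigma$, so it attains a minimum $2\kappa_0>0$. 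Equivalently, $\mathrm{spec}\big(-\mathbb{J}(\xi)\big)\subset\{\mu\in\mathbb{C}:\ \mathrm{Re}\,\mu\le-2\kappa_0\}$ for every $\xi\in\Sigma$, and by continuity and compactness this spectrum stays in a fixed bounded subset of the left half-plane.

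\textbf{Step 2 (From spectral gap to a uniform semigroup bound).} The point requiring care is that $|e^{-t\mathbb{J}(\xi)}|$ is not controlled by the spectral abscissa alone: $\mathbb{J}(\xi)$ need not be normal, and its eigenvalues may coalesce into nontrivial Jordan blocks as $\xi$ varies, so one cannot simply reduce to diagonal form. I would instead use the resolvent representation: fix a closed contour $\Gamma$ contained in $\{\mu\in\mathbb{C}:\ \mathrm{Re}\,\mu\le-\kappa\}$ for some $0<\kappa<2\kappa_0$, chosen (using Step 1) so that $\Gamma$ encircles $\mathrm{spec}(-\mathbb{J}(\xi))$ for every $\xi\in\Sigma$. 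Then
\[
e^{-t\mathbb{J}(\xi)}=\frac{1}{2\pi i}\oint_{\Gamma}e^{\mu t}\big(\mu I+\mathbb{J}(\xi)\big)^{-1}\,\mathrm{d}\mu,
\]
and since $(\mu,\xi)\mapsto(\mu I+\mathbb{J}(\xi))^{-1}$ is continuous on the compact set $\Gamma\times\Sigma$ (it never meets the spectrum), its operator norm there is bounded by some $C>0$; hence $|e^{-t\mathbb{J}(\xi)}|\le\frac{|\Gamma|}{2\pi}\,C\,e^{-\kappa t}$, which is precisely \eqref{G5.26}. An equivalent route is a Lyapunov argument: for each $\xi\in\Sigma$ the matrix $-\mathbb{J}(\xi)$ is Hurwitz, so $P(\xi):=\int_0^\infty e^{-s\mathbb{J}(\xi)^{*}}e^{-s\mathbb{J}(\xi)}\,\mathrm{d}s$ is Hermitian positive definite and solves $P(\xi)(-\mathbb{J}(\xi))+(-\mathbb{J}(\xi))^{*}P(\xi)=-I$; it depends continuously on $\xi$, so $c_0 I\le P(\xi)\le C_0 I$ on $\Sigma$ for uniform constants, and then $\frac{\mathrm{d}}{\mathrm{d}t}\big(\widehat{\mathbb{V}}^{*}P\widehat{\mathbb{V}}\big)=-|\widehat{\mathbb{V}}|^2\le-C_0^{-1}\widehat{\mathbb{V}}^{*}P\widehat{\mathbb{V}}$ gives \eqref{G5.26} with $\kappa=(2C_0)^{-1}$ and $C=(C_0/c_0)^{1/2}$.

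\textbf{Main obstacle.} The genuinely delicate point is Step 2: upgrading ``each eigenvalue has negative real part'' to a bound on the matrix exponential that is \emph{uniform} over the annulus. One must work with a quantity that is continuous in $\xi$ everywhere on $\Sigma$ --- the resolvent on a fixed contour, or the Lyapunov matrix $P(\xi)$ --- rather than with a diagonalizing (or Jordan) change of basis, which may degenerate at parameter values where eigenvalues collide. Everything else --- continuity of the roots of the characteristic polynomial in the coefficients, and the extraction of a uniform positive minimum on a compact set --- is routine.
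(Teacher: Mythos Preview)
Your proposal is correct and follows the same line as the paper: use the Routh--Hurwitz determinants $A_1,\dots,A_4>0$ (computed just before the proposition) to conclude that every eigenvalue of $\mathbb{J}(\xi)$ has positive real part, and then pass to the exponential bound. The paper simply asserts that the proposition follows ``directly'' from this spectral information (referring to \cite{DD-JEE-2014} for the method), whereas you supply the missing uniformity argument in Step~2 via the resolvent/contour or Lyapunov-matrix route; your version is in fact more complete than what is written in the paper.
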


For $r\leq |\xi| \leq R$, it follows from \eqref{G5.26} that
\begin{align*}
|(\widehat{\overline{\rho}},\widehat{\overline{d}},\widehat{\overline{\theta}},\widehat{\overline{\eta}})(t)|\leq C{e}^{-\kappa t}|(\widehat{\rho},\widehat{d},\widehat{\theta},\widehat{\eta})(0)|.  
\end{align*}


\bigskip 
{\bf Acknowledgements:} Li and   Ni  are supported by NSFC (Grant Nos. 12331007, 12071212).  
And Li is also supported by the ``333 Project" of Jiangsu Province.
Jiang  is supported by NSF of Jiangsu Province 
(Grant No. BK20191296). 

\bibliographystyle{plain}

\begin{thebibliography}{aaa}

\bibitem{AF-Pa-2003} R. A. Adams, J. J. F. Fournier,
{\it Sobolev spaces. second edition,} Elsevier/Academic Press, Amsterdam, 2003.

\bibitem{BWWS-PT-1966} J.-W. Bond, K.-M. Watson, J.-A. Welch, K.-E. Shuler,
{\it Atomic Theory of Gas Dynamics,} Physics Today, 19 (1966), 114--117.

\bibitem{BD-JMP-2006} C. Buet, B. Despres, 
{\it Asymptotic preserving and positive schemes for radiation hydrodynamics,}
J. Comput. Phys., 215(2) (2006), 717--740.

\bibitem{Cs-1960} S. Chandrasekha,
{\it Radiative transfer,} Dover Publications, New York, 1960.

\bibitem{Dan-00}
 R. Danchin,  {\it Global existence in critical spaces for compressible Navier-Stokes equations}, 
Invent. Math. 141 (2000), no. 3, 579–614.


\bibitem{cgll}
J.-F. Coulombel, T. Goudon, P. Lafitte, C. Lin, {\it Analysis of large amplitude shock profiles for non-equilibrium radiative hydrodynamics: formation of zeldovich,} Shock Waves, 22(3) (2012), 181-197.

\bibitem{DD-JEE-2014} R. Danchin, B. Ducomet,
{\it On a simplified model for radiating flows,}
J. Evol. Equ., 14(1) (2014), 155--195.

\bibitem{DD-AMPA-2017} R. Danchin, B. Ducomet, 
{\it Existence of smooth solutions with critical regularity to a polytropic model for radiating flows,} Ann. Mat. Pura Appl., 196(1) (2017), 107--153.

\bibitem{F-R-X1}
L.-L. Fan, L.-Z. Ruan, W. Xiang, {\it Asymptotic stability of a composite wave of two viscous shock waves for the one-dimensional radiative Euler equations,}
Ann. Inst. H. Poincar$\acute{e}$ C Anal. Non Lin$\acute{e}$aire, 36(1) (2019), 1-25.

\bibitem{F-R-X2}
L.-L. Fan, L.-Z. Ruan, W. Xiang, {\it Asymptotic stability of rarefaction wave for the inflow problem governed by the one-dimensional radiative Euler equations,}
SIAM J. Math. Anal., 51(1) (2019), 595-625.

\bibitem{Hh-arwa-2017} H. Hong,
{\it Asymptotic behavior toward the combination of contact discontinuity with rarefaction waves for 1-D compressible viscous gas with radiation,}
Nonlinear Anal. Real World Appl., 35 (2017), 175--199.

\bibitem{Jp-2015-DCDS} P. Jiang, 
{\it Unique global solution of an initial-boundary value problem to a diffusion approximation model in radiation hydrodynamics,}
Discrete Contin. Dyn. Syst., 35(7) (2015), 3015--3037.

\bibitem{Jp-DCDS-2017} P. Jiang,
{\it Global well-posedness and large time behavior of classical solutions to the diffusion approximation model in radiation hydrodynamics,}
Discrete Contin. Dyn. Syst., 37(4) (2017), 2045--2063.

\bibitem{JZ-2018-JMAA} P. Jiang, Y.-L. Zhou, 
{\it Smooth solutions to diffusion approximation radiation hydrodynamics equations,}
J. Math. Anal. Appl., 466(1) (2018), 324--337.

\bibitem{Jp-2018-ZAMP} P. Jiang, 
{\it Unique global solution to the thermally radiative magnetohydrodynamics equations,}
Z. Angew. Math. Phys. 69(4) (2018), Paper No. 96, 29 pp.

\bibitem{JXZ-2009} S. Jiang, F. Xie, J.-W. Zhang, 
{\it A Global Existence Result in Radiation Hydrodynamics,} Industrial
and Applied Mathematics in China, Series in Contemporary Applied Mathematics, 
High Edu. Press and World Scientiﬁc, Beijing, Singapore, 2009

\bibitem{K}
S. Kawashima, {\it Systems of a hyperbolic-parabolic composite type, with applications to the equations of magnetohydrodynamics,}
Doctoral Thesis, Kyoto University, 1984.

\bibitem{KNN}
S. Kawashima, Y. Nikkuni, S. Nishibata, {\it The initial value problem for hyperbolic-elliptic coupled system and applications to radiation hydrodynamics,} Analysis of systems of conservation laws (Aachen, 1997), Monogr. Surv. Pure Appl. Math., 99, Chapman \& Hall/CRC, Boca Raton, FL, 1999, 87-127.

\bibitem{K-N-N}
S. Kawashima, Y. Nikkuni, S. Nishibata, {\it Large-time behavior of solutions to hyperbolic-elliptic coupled
systems,} Arch. Ration. Mech. Anal., 170(4) (2003), 297-329.

\bibitem{KHK-AMS-2023} H. Kim, H. Hong, J. Kim, 
{\it Global existence of smooth solutions for the diffusion approximation model of general gas in radiation hydrodynamics,}
Acta Math. Sin. (Engl. Ser.) 39(10) (2023), 1855--1873.

\bibitem{KW-book-1994} R. Kippenhahn, A. Weigert, {\it Stellar structure and evolution, Springer Verlag,} BerlinHeidelberg, 1994.

\bibitem{LB-1972} M. Lavrentiev, B. Chabat,  {\it Méthodes de la théorie des fonctions d’une variable complexe,}
Editions Mir, Moscou, 1972.

\bibitem{Lcj-2011-cms} C. Lin, 
{\it Asymptotic stability of rarefaction waves in radiative hydrodynamics,}
Commun. Math. Sci., 9(1) (2011), 207--223.

\bibitem{lcj}
C. Lin, J.-F. Coulombel, T. Goudon, {\it Shock profiles for non equilibrium radiating gases,} Phys. D, 218(1) (2006), 83-94.

\bibitem{LZ-Cpaa-2020} L.-Q. Liu, L. Zhang,
{\it Optimal decay to the non-isentropic compressible micropolar fluids,}
Commun. Pure Appl. Anal., 19(9) (2020), 4575--4598.

\bibitem{MN-jmku-1980} A. Matsumura, T. Nishida, 
{\it The initial value problem for the equations of 
motion of viscous and heat-conductive gases,} J. Math. Kyoto Univ., 20(1) (1980), 67--104.

\bibitem{MM-1984-book} D. Mihalas, B.-W. Mihalas, 
{\it Foundations of radiation hydrodynamics,}
Oxford University Press, New York, 1984. 

\bibitem{PO-book-1968} S.-S. Penner and D.-B. Olfe, {\it Radiation and Reentry,} Academic
Press, New York, 1968.

\bibitem{Pgc-1973} G.-C. Pomraning, {\it The Equations of Radiation Hydrodynamics,} Pergamon Press, Elmsford, NY, 1973.

\bibitem{Pgc-2005} G.-C. Pomraning,
{\it Radiation hydrodynamics,} Dover Publications, New York, 2005.

\bibitem{RX-2013-MMAS} C. Rohde, F. Xie, 
{\it Decay rates to viscous contact waves for a 1D compressible radiation hydrodynamics model,}
Math. Models Methods Appl. Sci., 23(3) (2013), 441--469.

\bibitem{WX-2011-JDE} J. Wang, F. Xie, 
{\it Asymptotic stability of viscous contact wave for the 1D radiation hydrodynamics system,}
J. Differential Equations, 251 (2011), 1030--1055.

\bibitem{WX-2011-nonlinear A} J. Wang, F. Xie,
{\it Asymptotic stability of viscous contact wave for the one-dimensional compressible viscous gas with radiation,}
Nonlinear Anal., 74(12) (2011), 4138--4151.

\bibitem{Wwj-siam-2021} W.-J. Wang,
{\it Global well-posedness and decay rates of strong solutions to a P1-approximation model arising in radiation hydrodynamics,} SIAM J. Math. Anal., 53(5) (2021), 5361--5404.

\bibitem{WX-2011-MMAS} W.-J. Wang, F. Xie, 
{\it The initial value problem for a multi-dimensional radiation hydrodynamics model with viscosity,}
Math. Methods Appl. Sci., 34(7) (2011), 776--791.

\bibitem{WXY-JDE-2023} W.-J. Wang, F.-Xie, X.-F. Yang, 
{\it Large time behavior of solutions to a diffusion approximation radiation hydrodynamics model,}
J. Differential Equations, 366 (2023), 518--564.

\bibitem{Xf-DCDS-2012} F. Xie,
{\it Nonlinear stability of combination of viscous contact wave with rarefaction waves for a 1D radiation hydrodynamics model,}
Discrete Contin. Dyn. Syst. Ser. B., 17(3) (2012), 1075--1100.








\end{thebibliography}

\end{document}